\definecolor{darkgreen}{rgb}{.1,.7,.3}
\numberwithin{equation}{section}
\newtheorem{theo}{Theorem}[section]
\newtheorem{lemma}[theo]{Lemma}
\newtheorem{cor}[theo]{Corollary}
\newtheorem{prop}[theo]{Proposition}
\newtheorem{dfntn}[theo]{Definition}
\newtheorem{rem}[theo]{Remark}
\newcommand{\Pin}[1]{{\mathbb P}^{#1}}
\newcommand {\xel} {(X, L)}
\newcommand{\num}{\equiv}
\newcommand{\Pic}{\text{\rm Pic}}
\newcommand{\Pp}{\mathbb P}
\newcommand{\Oc}{\mathcal O}
\newcommand{\FF}{\mathbb{F}}
\newcommand{\E}{\mathcal{E}}
\newcommand{\Ext}{{\rm Ext}}
\newcommand{\cA}{\mathcal{A}}
\newcommand{\cE}{\mathcal{E}}
\newcommand{\cL}{\mathcal{L}}
\newcommand{\cF}{\mathcal{F}}
\newcommand{\cM}{\mathcal{M}}
\newcommand{\cG}{\mathcal{G}}
\newcommand{\cU}{\mathcal{U}}
\newcommand{\cO}{\mathcal{O}}
\title[Ulrich Bundles on  some decomposable threefold scrolls over ${\mathbb F}_a$]{On Ulrich Bundles on  some decomposable\\ threefold scrolls over ${\mathbb F}_a$}
\author{Maria Lucia Fania}
\address{Maria Lucia Fania\\ Dipartimento di Ingegneria e Scienze dell'Informazione e Matematica\\
Universit\`{a} degli Studi di L'Aquila\\
Via Vetoio Loc. Coppito\\67100 L'Aquila\\Italy}
\email{marialucia.fania@univaq.it}
\author{Flaminio Flamini}
\address{Flaminio Flamini\\Dipartimento di Matematica\\ Universit\`a degli Studi di Roma
Tor Vergata \\ Viale della Ricerca Scientifica, 1 - 00133 Roma\\Italy}
\email{flamini@mat.uniroma2.it}
\author{Francesco Malaspina}
\address{Francesco Malaspina\\Dipartimento di Scienze Matematiche\\ Politecnico di Torino \\ Corso Duca degli Abruzzi 24
10129, Torino\\Italy}
\email{francesco.malaspina@polito.it}
\author{Joan Pons-Llopis}
\address{Joan Pons-Llopis\\Dipartimento di Scienze Matematiche\\ Politecnico di Torino \\ Corso Duca degli Abruzzi 24
10129, Torino\\Italy}
\email{juan.ponsllopis@polito.it}
\subjclass[2020]{Primary 14J30, 14J26, 14J60, 14C05; Secondary 14N30}
\keywords{Ulrich bundles, $3$-folds, ruled surfaces, moduli, deformations}
\thanks{The second author has been partially supported by  the MIUR Excellence Department Project MatMod@TOV, MIUR CUP-E83C23000330006, 2023-2027,  awarded to the Department of Mathematics, University of Rome Tor Vergata.  The authors are members of  INdAM-GNSAGA}
\begin{document}

%%%%%%%%%%%%%%%%%%%%%%%%%%%%%%%%%%%%%%%%%%%%%%%%%%%%%%%%%%%%%%%%%%%%%%%%%%%%%%
%%%%%%%%%%%%%%%%%%%% Author(s) and Address %%%%%%%%%%%%%%%%%%%%%%%%%%%%%%%%%%%
%%%%%%%%%%%%%%%%%%%%%%%%%%%%%%%%%%%%%%%%%%%%%%%%%%%%%%%%%%%%%%%%%%%%%%%%%%%%%%

%%%%%%%%%%%%%% ABSTRACT %%%%%%%%%%%%%%%%%%%%%%%%%%%%%%%%%%%%%

\begin{abstract}  This paper investigates Ulrich bundles on decomposable threefold scrolls $X$ over the Hirzebruch surface $\mathbb F_a$, for any integer $a \geq 0$, focusing on the study of their structure and classification. We prove existence of such Ulrich bundles, studying their properties, determining conditions for the {\em Ulrich complexity} of their support variety $X$ and analyzing instances of {\em Ulrich wildness} for $X$. 

Our results delve also into the moduli spaces of such Ulrich bundles, characterizing generic smoothness (and sometimes even birational classification) of their modular components and computing their dimensions. Through a detailed analysis of Chern classes, we also provide a deep understanding of the interplay between the geometric properties of the underlying variety $X$ and the algebro-geometric features of Ulrich bundles on it, contributing to their construction as well as to their modular and enumerative theory.  \end{abstract}

\maketitle
%%%%%%%%%%%%%%% INTRODUCTION %%%%%%%%%%%%%%%%%%%%%%%%%%%%%

\noindent

\section*{Introduction}

Given a $n$-dimensional closed, irreducible algebraic variety $X\subset\Pp^N$ polarized by a very ample divisor class $H$, an {\em Ulrich sheaf} on $X$  with respect to the given polarization $\mathcal O_X(H)$ (or, with shorter terminology as we will use in this paper, an 
$\mathcal O_X(H)${\em-Ulrich sheaf})  is a non-zero coherent sheaf $\mathcal{E}$ such that $H^*(\mathcal{E}(-iH))=\{0\}$ for $1\leq i \leq n$. 
Equivalently, $\mathcal{E}$ has an \linebreak $\cO_{\Pp^N}$-linear resolution of length $codim_{\Pp^N}(X)=N-n$:
$$
0\rightarrow \cO_{\Pp^N}(n-N)^{a_{N-n}}\rightarrow\cO_{\Pp^N}(n-N+1)^{a_{N-n-1}}\rightarrow\cdots\cO_{\Pp^N}(-1)^{a_1}\rightarrow\cO_{\Pp^N}^{a_0}\rightarrow\cE\rightarrow 0,
$$
\noindent that is, all the morphisms of the resolution are given by matrices of linear forms in the polynomial ring $k[x_0,\cdots, x_N]$.

Ulrich bundles appeared in their algebraic form in  \cite{U}, under the name of {\em Maximally Generated Maximal Cohen-Macaulay} modules.  With the publication of the  ground-breaking paper \cite{ESW}, Ulrich bundles stepped into the world of projective varieties.  In  that paper, the authors showed that Ulrich bundles could become a crucial object to tackle a wide range of problems. Therefore they asked whether any projective variety $X\subset\Pp^N$ supports an Ulrich bundle and, more particularly, which is the lowest possible rank of such a bundle.
Nowadays, this is rephrased in terms of the set of \emph{Ulrich ranks} of a projective variety $X\subset\Pp^N$

$$
Ur(X):=\{r\in \mathbb{N}^*  \mid \text{$\exists$\, $\mathcal{E}$ indecomposable $\mathcal O_X(H)$-Ulrich bundle on $X$, $rk(\mathcal{E})=r$}\}\subset\mathbb{N}^*
$$
and its \emph{Ulrich complexity} $uc_H(X) :={\rm min}_{r \in \mathbb N^*} \ Ur(X)$.

Ever since this questions were raised, a lot of work has been devoted to settle them. For instance, it is known  that for any smooth, irreducible projective curve $C\subset\Pp^N$, $Ur(C)=\mathbb{N}^*$ and there are some families of smooth, irreducible projective surfaces $S\subset\Pp^N$ where $Ur(S)$ is well-understood. However, for $n\geq 3$ the known results are very sparse.

In general, the Ulrich property remains invariant under considering extensions of Ulrich bundles. Moreover, Ulrich bundles are always Gieseker semistable and Gieseker stable whenever they are not extension of Ulrich bundles of lower rank. So they can be seen as forming an open subvariety $M^U_X(r;c_i)$ of the moduli space $M^s_X(r;c_i)$ of rank-$r$ Gieseker semistable bundles with fixed Chern classes. Therefore it is also very interesting to study the geometric properties of 
$M^U_X(r;c_i)$.

Given the intrinsic difficulty of constructing Ulrich bundles, it was natural to pay attention to a family of projective varieties where there are candidates to consider a priori. In particular, the existence and minimal rank of Ulrich bundles on varieties $X$ having the structure of a projective bundle over a base variety $Y$ has been studied in the literature. This is partially due to the fact that, by means of the projection formula, pullbacks of vector bundles from $Y$ are good candidates for being Ulrich (see e.g. \cite{f-lc-pl}). 

If we consider the varieties with a finite number of ACM bundles  (or even Ulrich , see \cite{eh}) the projective spaces $\mathbb P^n$, the hyperquadrics $\mathcal Q_n$, the Veronese surface $V_2$ and the cubic scroll $S(1,2)$ we notice that, except for the cases of $\mathcal Q_n$ with $n>2$, most of the Ulrich bundles (actually also of the ACM bundles) are extensions of direct sums of Ulrich line bundles.

In \cite{fm} it has been showed that the quartic scroll surfaces $S(1,3)$ and $S(2,2)$ support at most one dimensional families of Ulrich bundles. An explicit classification is given from which one can notice that all the one-dimensional families are made by extension of direct sums of Ulrich line bundles.  Also on elliptic curves there are at most one-dimensional families of (extensions of direct sums of Ulrich line bundles) Ulrich bundles (see \cite{At}).  In \cite{ahmp} classification of  Ulrich vector bundles of arbitrary rank on smooth projective varieties of minimal degree  of any dimension  is given. They have been characterized as bundles admitting a special type of filtration. A consequence of this result is that the moduli spaces of Ulrich bundles are zero-dimensional. The case $\mathbb P^1\times\mathbb P^2$ is very peculiar, in fact in  \cite{fms} it is proved that 
there exists only a finite number of ACM   bundles which are not Ulrich and moreover all the ACM bundles are classified therein. Again, it turns out that all families of dimension greater than zero of ACM are made-up of extensions of direct sums of Ulrich line bundles.

 In  dimension two, the existence and minimal rank of Ulrich bundles on Hirzebruch surfaces (namely, two-dimensional projective bundles over $\Pp^1$) has been studied in \cite{ant}. The case of geometrically ruled surfaces over an arbitrary smooth curve were dealt in \cite{a-c-mr} and \cite{accmrt}.

As for dimension three, in \cite{f-lc-pl} the authors constructed Ulrich bundles of low rank on three-dimensional scrolls. In particular they paid special attention to the four types of threefold scrolls that can be embedded in $\Pp^5$. In \cite{cas-fae-ma}and \cite{cas-fae-ma2} a complete classification of low-rank Ulrich bundles on smooth {\em del Pezzo} threefold of Picard number $3$, as well as a detailed study of the geometrical properties of their moduli spaces was carried on. Later on, in \cite{fa-fl2}, more general three-dimensional, idecomposable projective bundles over an Hirzebruch surface and with no restriction on the rank of the bundles were the main object of attention. As it has been mentioned above, most of the construction of Ulrich bundles on projective bundles rely, as a first step, on the pullback of particular vector bundles from the base variety.

In this paper, we want to study carefully whether this is a necessary approach or, differently, if there exist Ulrich bundles that are not a pullback (up to twist) of vector bundles from the base variety. In order to do this, we will work over a large family of three-dimensional varieties that own a {\em double projective bundle structure} over a Hirzebruch surface (see Proposition \ref{prop:joan} and Corollary \ref{cor:ffOct}). Indeed, the study of projective varieties that support several projective bundle structures is a very rich field of research (see \cite{Sa} and \cite{OcWi}). 

More precisely,  for any smooth, irreducible and projective three-dimensional  polarized variety $(X,\cO_X(h)$, with polarization $h$ as in \eqref{eq:Xe} below,  with a double projective bundle structure under consideration,  we prove the following set of results.

\begin{itemize}
    \item We classify  the set of Ulrich line bundles on them  proving, in particular, that their Ulrich complexity is $uc_h(X)=1$  (see Theorem  \ref{prop:LineB}).
    \item  We  make explicit which  of the determined  Ulrich line bundles are obtained  in some cases  as particular pull-backs from the base variety, in the sense of Theorem \ref{pullback} below, or even as  {\em twisted} pull-backs from the two bases when 
    $X \simeq \Pp^1\times \FF_a$, (see Remark \ref{rem:beauville}-(3) below).

\item We also discuss how the rich geometry on X induced by its double projective bundle structure enriches the set of classified Ulrich line bundles with an {\em involution} acting on them and which is distinct from the usual existing one, specifically the involution mapping each Ulrich line bundle $L$ to its {\em Ulrich dual} $L^U$, cfr. e.g. 
Remark \ref{Rmk1}-(ii) below. The second involution, described in Remark \ref{rem:3.2}, allows us to group the classified Ulrich line bundles into {\em equivalence classes}. Furthermore, we also show that such an {\em equivalence relation} extends to higher ranks when considering indecomposable Ulrich bundles obtained as non-trivial extensions of the classified Ulrich line bundles. This significantly reduces the number of cases to be examined when studying the modular components of such higher-rank Ulrich bundles.

\item Then, in Theorems \ref{thm:rk2case1} and \ref{thm:rk2case2} we describe modular components for those rank-two Ulrich bundles arising from deformations of extensions of the classified Ulrich line bundles, and we also study slope-stability of such bundles, smoothness of the corresponding modular component and we also compute its dimension and the main properties concerning its birational geometry. 

\item Similarly as in Remark \ref{rem:beauville}-(3) for line bundles, in Remark \ref{cl:pullbackrk2}  we describe some necessary conditions for Ulrichness of some rank-two vector bundles on $X$ to be suitably related to Ulrichness on the base surface $\mathbb F_a$; this fact has been deeply used in the thorough analysis of all the line-bundle extension spaces studied in Section \S\,\ref{extensionsrk2}. 

\item In Section \ref{Ulrichrk2noextension}, where $a=b=0$ and any $c \geq 1$ are taken (namely, in the case of the {\em Segre-Veronese scrolls}), we also exhibit rank-two Ulrich bundles which do not arise as line-bundle extensions. Moreover, as explained in Remark 
\ref{rem:noextnopullbackmay},  we also deduce that  they are not even (twisted) pullbacks from the base varieties arising from the natural two or three projections onto the factors.

\item Finally, when $0 \leq a \leq b \leq 1$ and $c \geq a+b+1$, Theorem \ref{thm:general0} shows  that $Ur(X)=\mathbb{N}^*$ in a particular strong sense (namely, varieties $X$ are therefore {\em Ulrich wild}, similarly as in \cite{DG} ); we moreover show generic smoothness  of the corresponding modular components of Urlich bundles in any rank,  computing also their dimensions.

\end{itemize}

\section{Notation and terminology}

 We work throughout over the field $\mathbb{C}$ of complex numbers. All schemes will be endowed with the Zariski topology. By \emph{variety}   we mean an integral algebraic scheme. We say that a property holds for a \emph{general}  point of a variety $V$ if it holds for any point in a Zariski open non--empty subset of $V$. We will  interchangeably use the terms {\em rank-$r$ vector bundle} on a variety $V$ and {\em rank-$r$ locally free sheaf} on $V$; in particular for 
the case $r=1$,   that   is line bundles (equiv. invertible sheaves), to ease notation and if no confusion arises, we sometimes identify line bundles with Cartier divisors,  interchangeably using additive notation instead of multiplicative notation and tensor products. Thus, if $L$ and $M$ are line bundles on $V$, the {\em dual} of $L$ will be 
denoted by either $L^{\vee}$, or $L^{-1}$ or even $-L$, whereas $L^{\otimes n}$ will be sometimes denoted with $nL$ as well as $L \otimes M$ with $L+M$.
 
If $\mathcal P$ is either a {\em parameter space} of a flat family of geometric objects $\mathcal E$ defined on $V$ (e.g. vector bundles, extensions, etc.) 
or a {\em moduli space} parametrizing geometric objects modulo a given equivalence relation, we will denote by $[\mathcal E]$ the parameter point (resp., the moduli point) corresponding to the geometric object $\mathcal E$ (resp., associated to the equivalence class of $\mathcal E$). For further non-reminded terminology, we refer the reader to \cite{H}.

\vspace{3mm}
In the sequel, we will  focus on smooth, irreducible, projective threefolds and the following notation will be used throughout this work.

\label{notation}
  \begin{enumerate}
\item [$\bullet$ ] $X$ is a smooth, irreducible, projective variety of dimension $3$ (or simply a {\em threefold});
\item [$\bullet$ ]$\chi(\mathcal F) =  \sum_{i=0}^3 (-1)^i  h^i(X, \mathcal F)$ denotes the Euler characteristic of $\mathcal F$, where $\mathcal F$ is a vector bundle of rank $r \geqslant 1$ on $X$;
\item [$\bullet$ ] $\omega_X$ denotes the canonical bundle of $X$ as well as $K_X$ denotes a canonical divisor; 
\item [$\bullet$ ] $c_i = c_i(X)$  denotes the $i^{th}$-Chern class of $X$, $0 \leqslant i \leqslant 3$;
\item [$\bullet$ ] $d = \deg{X} = L^3$ denotes the degree of $X$ in its embedding given by a very-ample line bundle $L$ on $X$;
\item [$\bullet$ ] $g = g(X),$ denotes the sectional genus of $\xel$ defined by $2g-2:=(K_X+2 L)L^2;$ 
\item [$\bullet$ ] if $S$ is a smooth surface, $\equiv$ will denote the numerical equivalence of divisors on $S$ whereas $\sim$ their linear equivalence. 
\end{enumerate}

For non-reminded notation and terminology we generally refer the reader to \cite{H}. 

\bigskip

\section{Preliminaries}\label{S:pre} In this section we will remind basic constructions and properties of threefolds we are interested in, as well as generalities on Ulrich bundles which will be used later on.

\subsection{Preliminaries on $3$-fold scrolls over Hirzebruch surfaces}\label{ss:3folds} 

Let   $a \geqslant 0$ be an integer and let $\FF_a$ be the  Hirzebruch surface, that  is  $\FF_a := \Pp(\Oc_{\Pp^1} \oplus\Oc_{\Pp^1}(-a))$, and  
let $\pi : \FF_a \to \Pp^1$ be the natural projection onto the base. Then, as in \cite[V, Prop.\,2.3]{H}, 
${\rm Num}(\FF_a) = \mathbb{Z}[ C_{-} ] \oplus \mathbb{Z}[f],$ where:

\begin{itemize}
\item[$\bullet$] $f := \pi^*(p)$, for any $p \in \Pp^1$, whereas

\item[$\bullet$] $ C_{-}$ denotes either the unique section corresponding to the vector bundle morphism $\Oc_{\Pp^1} \oplus\Oc_{\Pp^1}(-a) \to\!\!\!\to \Oc_{\Pp^1}(-a)$ when $a>0$ or, otherwise, the fiber of the other ruling different from that 
induced by $f$, when $a=0$. 
\end{itemize}

\noindent
In particular one has $$ C^2_{-}= - a, \; f^2 = 0, \;  C_{-}\,.f = 1.$$  In the above notation, we will also denote by $C_+$ either any of the sections corresponding to the morphism of bundles $\Oc_{\Pp^1} \oplus\Oc_{\Pp^1}(-a) \to\!\!\!\to \Oc_{\Pp^1}$, when $a >0$, or $C_+ = C_{-}$,  when $a=0$; in any case we have $$C_+ \equiv C_- + a f.$$

For any rank-two vector bundle $\mathcal E$ on $\FF_a$, let  $c_i(\mathcal{E})$ denote the  $i^{th}$-Chern class of $\mathcal{E}$, $0 \leqslant i \leqslant 2$; then $c_1( \mathcal{E}) \num \alpha  C_{-} + \beta f$, for some $ \alpha, \beta \in \mathbb Z$, and $c_2(\mathcal{E}) \in \mathbb Z$. For any line bundle of the form $\cL \num \alpha  C_{-} + \beta f$ we will also use sometimes notation $\cL:= \Oc_{\FF_a}(\alpha,\beta)$. 

From now on, for any integer $b \geq 0$ we set 
\begin{equation}\label{eq:2.may1}
\E_a:=   \Oc_{\FF_a} \oplus\Oc_{\FF_a}(0, -b),
\end{equation} and we consider the projective bundle $\mathbb P_{\FF_a}(\mathcal E_a)$, with $\varphi: \Pp_{\FF_a}(\E_a) \to \FF_a$  the natural projection (or simply {\em scroll map}). Correspondingly, we set

\begin{itemize}
\item[$\bullet$] $\xi := \Oc_{ \Pp_{\FF_a}(\E_a)}(1)$  the {\em tautological line bundle} of $ \Pp_{\FF_a}(\E_a)$, 
\item[$\bullet$] $ C_0 := \varphi^*(C_{-})$, where $C_{-} \subset \mathbb F_a$ as above, 
\item[$\bullet$] $ F := \varphi^*(f)$, where $f \subset \mathbb F_a$ any fiber of the ruling as above. 
\end{itemize}

For any integer $c \in \mathbb Z$, we may consider
\begin{equation}\label{eq:2.may2}
h = \xi + C_0 + c F \in {\rm Pic} (\Pp_{\FF_a}(\E_a)). 
\end{equation}

\begin{rem}\label{changepol} {\rm We notice that the line bundle $h$ as in \eqref{eq:2.may2} is very ample on $\Pp_{\FF_a}(\E_a)$ if and only if 
\begin{equation}\label{eq:2.may3}
c \geq a + b + 1;
\end{equation}indeed, taking the bundle $\E_a$ as in \eqref{eq:2.may1}, one can consider 
the bundle
$$\mathcal E'_a := \mathcal E_a \otimes \Oc_{\FF_a}(1,c) = \Oc_{\FF_a}(1,c) \oplus \Oc_{\FF_a}(1,c -b),$$whose tautological line bundle is 
$$\cO_{\Pp_{\FF_a}(\E'_a)}(1) = \xi+\varphi^*(C_{-}+cf)= \xi+C_0+cF = h$$ as in \eqref{eq:2.may2}. Since $\mathcal E'_a$ is a splitting bundle, it is very ample (and, in such a case, by definition its tautological line bundle $h$ is) if and only if both its summands are and this occurs if and only if $c-b \geq a+1$. Indeed, since $b \geq 0$ by assumption, one obviously has $c \geq c - b$ so the previous inequality $c-b \geq a+1$, which coincides with \eqref{eq:2.may2}, is therefore the only necessary and sufficient condition for very-ampleness of $\mathcal E'_a$ (cfr.\,e.g.\,\cite[Cor.\,2.18-(a)]{H}). 
}
\end{rem}

Therefore, under condition \eqref{eq:2.may3}, the line bundle $h$ as in \eqref{eq:2.may2} gives rise to an embedding

\begin{equation}\label{eq:Xe}
\Phi:= \Phi_{|h|}: \, \Pp_{\FF_a}(\E_a) \hookrightarrow  X \subset \Pin{n},
\end{equation}where $X\subset\Pp^n$ is a smooth, irreducible, non-degenerate threefold such that $$(X, \mathcal O_X(1)) \simeq  (\Pp_{\FF_a}(\E_a), h),$$which is of {\em degree} $d$ and of {\em sectional genus} $g$, where
\begin{eqnarray}\label{eq:nde}
\;\;\;\;\; n := h^0(\Pp_{\FF_a}(\E_a), h) -1 = h^0(\FF_a, \E'_a)-1= 4c-2a-2b+3, &  \;\; \nonumber \\ 
\!\!\!\! d := c_1(\E'_a)^2- c_2(\E'_a)= 3(2c-a-b),& \;\;{\rm and} \;\;  
\\ \!\!\! g := 2c-a-b -1. & \nonumber 
\end{eqnarray}

In the present paper, we will study {\em Ulrich vector bundles} on such a 
projective variety $X \subset \Pp^n$, namely rank-$r$ vector bundles $\mathcal U_r$ on 
$X$ which are Ulrich w.r.t. $\mathcal O_X(1)$ (cf. \S.\;\ref{ss:Ulrich} for more precise definitions). From the above identification $(X, \mathcal O_X(1)) \simeq  (\Pp_{\FF_a}(\E_a), h)$, with a small abuse of notation we will consider $h$ as the {\em hyperplane line bundle} of $X \subset \Pp^n$ (namely we will identify $\mathcal O_X(1)$ with the {\em tautological polarization} of  $\Pp_{\FF_a}(\E'_a)$ as in Remark \ref{changepol}) and, correspondingly, we will simply call bundles $\mathcal U_r$ as {\em $h$-Ulrich vector bundles} on $X$.

From the abstract isomorphism $X \simeq \Pp_{\FF_a}(\E_a)$, one has
$$ \Pic(X) \cong  \mathbb Z[\xi]\oplus \varphi^{*} \Pic (\FF_a)\cong\langle \xi,\; C_0,\; F\rangle$$so e.g. the {\em canonical bundle} of $X$ is such that  
\begin{equation}\label{formulaKX}
\omega_{X} \simeq -2 \xi -2 C_0 -(a+b+2)F
\end{equation}and the {\em Chow ring} of $X$ is such that 
$$
A[X]\cong A[\FF_a][{\xi}]/\langle \xi^2+b \, F.\xi\rangle.$$In particular, taking 
$\xi, C_0, F$ as generators of the Chow ring, in $A[X]$ we have 
\begin{equation}\label{eq:chow}
\xi^2=-b \, \xi.F, \;\; C_0^2=-a \, C_0.F \; \; {\rm and} \;\; F^2 = 0 
\end{equation} and the multiplication of the ring $A(X)$ is defined by the product on the generators
$$
\xi^3=0, \, C_0^3=0, \, F^3=0, \, \xi.C_0^2=-a, \, \xi.F^2=0, \, \xi^2.C_0=-b, \xi^2.F=0,\, \xi.C_0.F=1.
$$Therefore, second Chern classes of vector bundles on $X$ can be expressed as 
\begin{equation}\label{eq:secondchern}
\alpha_1\, \xi.C_0+\alpha_2\, \xi.F+\alpha_3\, C_0.F,
\end{equation}for some integers $\alpha_1, \alpha_2, \alpha_3 \in \mathbb Z$. 

We will make use of the following more general result:

\begin{prop}\label{prop:joan} For any integers $a,b \geqslant 0$ , one has the following isomorphism: $$\Pp_{\FF_a}(\Oc_{\FF_a} \oplus\Oc_{\FF_a}(0, -b)) \cong   \Pp_{\FF_{b}}(\Oc_{\FF_{b}}\oplus \Oc_{\FF_{b}}(0,-a)).\;\;\; $$ 
\end{prop}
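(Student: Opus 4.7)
The plan is to realize both projective threefolds as the fiber product $\FF_a \times_{\Pp^1} \FF_b$.

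First I would observe that, with the paper's convention $\Oc_{\FF_a}(\alpha,\beta) \num \alpha C_{-} + \beta f$, the line bundle $\Oc_{\FF_a}(0,-b) = \Oc_{\FF_a}(-bf)$ is pulled back from $\Pp^1$ along the ruling $\pi_a : \FF_a \to \Pp^1$, since $f = \pi_a^{*}(p)$ for any point $p \in \Pp^1$. Hence $\Oc_{\FF_a}(0,-b) \simeq \pi_a^{*}\Oc_{\Pp^1}(-b)$, and therefore
\[
\Oc_{\FF_a} \oplus \Oc_{\FF_a}(0,-b) \;\simeq\; \pi_a^{*}\bigl(\Oc_{\Pp^1} \oplus \Oc_{\Pp^1}(-b)\bigr).
\]

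Next I would invoke the standard base-change property for projective bundles, namely that for any morphism $g : Z \to Y$ and any locally free sheaf $\cE$ on $Y$ one has a canonical isomorphism $\Pp_Z(g^{*}\cE) \simeq Z \times_Y \Pp_Y(\cE)$. Applying this to $g = \pi_a$, $Y = \Pp^1$ and $\cE = \Oc_{\Pp^1} \oplus \Oc_{\Pp^1}(-b)$, and using the definition $\FF_b = \Pp_{\Pp^1}(\Oc_{\Pp^1}\oplus\Oc_{\Pp^1}(-b))$, I obtain
\[
\Pp_{\FF_a}\bigl(\Oc_{\FF_a} \oplus \Oc_{\FF_a}(0,-b)\bigr) \;\simeq\; \FF_a \times_{\Pp^1} \FF_b.
\]

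Finally, by the symmetry of the fiber product and the analogous identity $\Oc_{\FF_b}(0,-a) \simeq \pi_b^{*}\Oc_{\Pp^1}(-a)$ for the ruling $\pi_b : \FF_b \to \Pp^1$, the same argument with the roles of $a$ and $b$ interchanged gives
\[
\Pp_{\FF_b}\bigl(\Oc_{\FF_b} \oplus \Oc_{\FF_b}(0,-a)\bigr) \;\simeq\; \FF_b \times_{\Pp^1} \FF_a \;=\; \FF_a \times_{\Pp^1} \FF_b,
\]
and the desired isomorphism follows. There is no serious obstacle here: the whole statement hinges on the observation that the twisting parameter $-b$ lives in the fiber direction of $\FF_a \to \Pp^1$, so that the rank-two bundle splits as a pullback from $\Pp^1$, after which the double projective bundle structure is simply the base-change symmetry of the fiber product $\FF_a \times_{\Pp^1} \FF_b$.
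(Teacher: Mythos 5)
Your proof is correct, and it reaches the fiber product $\FF_a \times_{\Pp^1} \FF_b$ from the opposite direction than the paper does. The paper \emph{starts} with $Y := \FF_a \times_{\Pp^1} \FF_b$, observes that its two projections are $\Pp^1$-bundle structures, invokes Sato's result (\cite[Lemma 1.3, Cor.~1.4]{Sa}) to guarantee that each structure is the projectivization of some rank-two bundle, and then identifies those bundles by computing the pushforward $\psi_{*}\Oc_{\psi}(1) \cong \pi_1^{*}\pi_{2*}\Oc_{\pi_2}(1) \cong \Oc_{\FF_a}\oplus\Oc_{\FF_a}(0,-b)$ via flatness of the projections. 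You instead start from the projective bundle, note that $\Oc_{\FF_a}(0,-b) = \pi_a^{*}\Oc_{\Pp^1}(-b)$ so the whole rank-two bundle is pulled back from $\Pp^1$, and apply the canonical base-change isomorphism $\Pp_Z(g^{*}\cE) \simeq Z\times_Y \Pp_Y(\cE)$ to land directly on $\FF_a\times_{\Pp^1}\FF_b$; symmetry of the fiber product finishes the argument. Your route is more elementary and self-contained: it needs neither Sato's existence-and-uniqueness theorem for double $\Pp$-bundle structures nor the projection-formula computation of the direct image. What the paper's approach buys in exchange is the explicit identification of the tautological classes ($\xi = \Oc_{\psi}(1) \cong \phi^{*}\Oc_{\pi_2}(1)$, etc.), which is then used immediately in Corollary \ref{cor:ffOct} to write down the change-of-basis matrix on Picard groups; if you wanted that extra information from your argument you would still have to trace the tautological bundle through the base-change isomorphism.
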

\begin{proof} Let $Y:= \FF_a \times_{\Pp^1} \FF_{b}$ be the following fiber-product   
  \begin{equation}\label{eq:maydiag}
      \xymatrix{
        Y \ar_{\phi}[d] \ar^{\psi}[r] & \FF_a \ar^{\pi_1}[d] \\
       \FF_{b} \ar^{\pi_2}[r] & \Pp^1,} 
\end{equation}  where $\pi_1$ and $\pi_2$ are the natural projections whereas the maps $\phi$ and $\psi$ are defined by the definition of fiber-product and the above diagram. Obviously, for any $x\in\FF_a$ (resp. $y\in \FF_{b}$) one has $\psi^{-1}(x)\cong\Pp^1$ (resp. $\phi^{-1}(y)\cong\Pp^1$). Therefore, by \cite[Lemma 1.3, Cor. 1.4]{Sa}, $Y$ is endowed with two $\Pp^1$-bundle structures, given by the maps $\psi: Y\rightarrow \FF_a$ and $\phi: Y\rightarrow \FF_{b}$, therefore there exists a unique (up to the tensor product by a line bundle) rank-two vector bundle $\mathcal{E}_a$ on $\FF_a$ (resp. $\mathcal{E}_{b}$ on $\FF_{b}$) such that $\Pp_{\FF_a}(\mathcal{E}_a)\cong Y \cong \Pp_{\FF_{b}}(\mathcal{E}_{b})$. 

Let us first show that $\mathcal{E}_a\cong \Oc_{\FF_a}\oplus\Oc_{\FF_a}(0, -b)$ as in \eqref{eq:2.may1}. Indeed one has $\Oc_{\psi}(1)\cong\phi^{*}\Oc_{\pi_2}(1)$,  where $\Oc_{\pi_2}(1)$ stands for the line bundle $\Oc_{\FF_{b}}(D_{-})$ where, similarly as at the beginning of this section, the generator of the ruling of $\FF_{b}$ is denoted by $g$ whereas $D_{-}$ denotes either the unique section of $\FF_{b}$ with $D_{-}^2=-b $, or otherwise it is any of the fibers of the other ruling different from that generated by $g$, when $b=0$. Therefore, since the maps $\pi_i$, $1 \leqslant i \leqslant 2$, are flat, one has: 
$$
\psi_{*}\Oc_{\psi}(1)\cong\psi_*\phi^{*}\Oc_{\pi_2}(1)\cong
\pi_1^*\pi_{2 *}\Oc_{\pi_2}(1)\cong \pi_1^*(
\Oc_{\Pp^1}\oplus\Oc_{\Pp^1}(-b))\cong
\Oc_{\FF_a}\oplus\Oc_{\FF_a}(0, -b)
$$ Similarly, as at the beginning of this section,  if   $C_{-}$ denotes the unique section of $\FF_{a}$ with  $C_{-}^2=-a $, or any of the generators of the ruling different from that generated by $f$, when $a=0$, and if as above we denote by $\Oc_{\pi_1}(1)$  the line bundle $\Oc_{\FF_{a}}(C_{-})$, then as above $\Oc_{\phi} (1)\cong\psi^{*}\Oc_{\pi_1}(1)$ and one gets that $\mathcal{E}_{b}\cong \Oc_{\FF_{b}}\oplus\Oc_{\FF_{b}}(0, -a)$, which complete the proof.
\end{proof}

\begin{rem}\label{rem:notation} {\normalfont From the proof of Proposition \ref{prop:joan} 
we notice that the projective bundle $\Pp_{\FF_a}(\mathcal{E}_a)$ defined at the beginning of this section, where $\mathcal{E}_a$ as in \eqref{eq:2.may1}, turns out to be isomorphic to the fiber-product $Y = \FF_a \times_{\Pp^1} \FF_{b}$ as in diagram \eqref{eq:maydiag}. As such, under this identification, the natural projection (or {\em scroll map}) 
$\varphi:  \Pp_{\FF_a}(\mathcal{E}_a) \to \FF_a$ defined at the beginning of the section coincides with the map $\psi$ on the horizontal arrow in \eqref{eq:maydiag}, determined by the fiber-product structure. 
}
\end{rem}

\begin{cor}\label{cor:ffOct}  With notation as in Proposition \ref{prop:joan} (statement and proof) set
$$\mathcal{E}_a := \Oc_{\FF_a} \oplus\Oc_{\FF_a}(0, -b) \;\; {\rm and} \; \; \mathcal{E}_{b} := \Oc_{\FF_{b}}\oplus \Oc_{\FF_{b}}(0,-a),$$$f$ (resp., $g$) a fiber of the map $\pi_1$ (resp., $\pi_2$) as in diagram \eqref{eq:maydiag}, whereas $C_-$ (resp., $D_-$) denote a section $\pi_1$ (resp., $\pi_2$) with non-positive self-intersection. Let $\psi$ and $\phi$ be the natural projections as in diagram \eqref{eq:maydiag} and let $\xi$ (resp., $\eta$) denote the tautological line bundle of $\Pp_{\FF_a}(\mathcal{E}_a)$ (resp., of $\Pp_{\FF_{b}}(\mathcal{E}_{b})$).
 
Let moreover $\widehat{{\mathcal B}}_1:=\{\xi,\;  C_0:= \psi^{*}(C_{-}), \; F:= \psi^{*}(f)\}$ be a basis of $\rm{Pic}( \Pp_{\FF_a}(\E_a))$ and, similarly, $\widehat{\mathcal B}_2:=\{\eta, \; D_0:= \phi^{*}(D_{-}), \; G:= \phi^{*}(g)\}$ be  a basis of  $\rm{Pic}( \Pp_{\FF_b}(\E_{b}))$. Then the change of basis matrix $\widehat{M}$ from  $\widehat{{\mathcal B}}_1$ to $\widehat{{\mathcal B}}_2$ is
 \[
\widehat{M} =\left(\begin{array}{ccc}
0&1& 0\\
1&0&0\\
0&0&1\\
\end{array}
\right)\]
\end{cor}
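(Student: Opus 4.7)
The plan is to read off each of the three identities $C_0=\eta$, $\xi=D_0$, $F=G$ directly from the fiber-product description of $Y\cong X$ given in Proposition \ref{prop:joan} and Remark \ref{rem:notation}, and then assemble the change of basis matrix by placing these identities as columns.

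First I would identify $\xi$ with the relative tautological line bundle $\Oc_\psi(1)$ of the $\Pp^1$-bundle $\psi:Y\to \FF_a$, and similarly $\eta$ with $\Oc_\phi(1)$ of the other $\Pp^1$-bundle $\phi:Y\to \FF_{b}$. The proof of Proposition \ref{prop:joan} already gives the crucial identifications
\[
\Oc_\psi(1)\cong \phi^{*}\Oc_{\pi_2}(1)=\phi^{*}\Oc_{\FF_{b}}(D_-)\quad\text{and}\quad \Oc_\phi(1)\cong \psi^{*}\Oc_{\pi_1}(1)=\psi^{*}\Oc_{\FF_{a}}(C_-),
\]
so that by the very definitions $D_0:=\phi^{*}(D_-)$ and $C_0:=\psi^{*}(C_-)$ one obtains
\[
\xi=D_0\qquad\text{and}\qquad \eta=C_0.
\]

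Next I would treat the class of the fiber. Both $F=\psi^{*}(f)$ and $G=\phi^{*}(g)$ are obtained by pulling back a point from $\Pp^1$: indeed $f=\pi_1^{*}(p)$ and $g=\pi_2^{*}(p)$. Using the commutativity $\pi_1\circ\psi=\pi_2\circ\phi$ of diagram \eqref{eq:maydiag}, one finds
\[
F=\psi^{*}\pi_1^{*}(p)=(\pi_1\circ\psi)^{*}(p)=(\pi_2\circ\phi)^{*}(p)=\phi^{*}\pi_2^{*}(p)=G.
\]

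Finally, to finish, I would translate the three identities $\xi=D_0$, $C_0=\eta$, $F=G$ into columns of the change of basis matrix from $\widehat{{\mathcal B}}_1$ to $\widehat{{\mathcal B}}_2$: the coefficients of the expansion of $\xi$ in the basis $\{\eta,D_0,G\}$ are $(0,1,0)^{T}$, those of $C_0$ are $(1,0,0)^{T}$, and those of $F$ are $(0,0,1)^{T}$, which yields exactly the matrix $\widehat{M}$ in the statement. No step is a serious obstacle once Proposition \ref{prop:joan} is in hand; the only care required is to be consistent about the identification of the tautological classes $\xi$ and $\eta$ with the relative $\Oc(1)$'s of the two projections, since these determine the rank-two bundles $\E_a$ and $\E_b$ only up to a twist by a line bundle, and the normalization used in the proof of Proposition \ref{prop:joan} must be the one inherited here.
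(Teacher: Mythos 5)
Your proof is correct and follows essentially the same route as the paper's: both identify $\xi=\Oc_\psi(1)=\phi^*(D_-)=D_0$ and $\eta=\Oc_\phi(1)=\psi^*(C_-)=C_0$ via the computations already carried out in the proof of Proposition \ref{prop:joan}, and both obtain $F=G$ from the commutativity of the fiber-product diagram. Your write-up merely makes explicit the step $F=\psi^*\pi_1^*(p)=\phi^*\pi_2^*(p)=G$ that the paper states without comment.
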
 
\begin{proof}  As in the statement, if $\xi$ denotes the tautological line bundle of $\Pp_{\FF_a}(\mathcal{E}_a)$,  we see that $\xi =\Oc_{\psi} (1)$.  Similarly if $\eta$ denotes the tautological line bundle of $\Pp_{\FF_{b}}(\mathcal{E}_{b})$  then $\eta=\Oc_{\phi} (1)$. 
Then the change of basis matrix $\widehat{M}$ from  $\widehat{{\mathcal B}}_1$ to $\widehat{{\mathcal B}}_2$ can be computed by observing that
$\xi=D_0 = \phi^{*}(D_{-}) , \;  \eta= C_0 = \psi^{*}(C_{-})$  and $F= \psi^{*}(f)=\phi^{*}(g) =G.$
\end{proof}

\begin{rem} \textnormal{Notice that, by \cite{OcWi}, we have also the opposite implication of  Proposition \ref{prop:joan}:  if $X$ is a smooth projective variety endowed with two different $P$-bundle structures  $\phi:X\rightarrow Y$ and $\psi:X\rightarrow Z$ such that $\dim(Y) +\dim(Z) = \dim(X)+1$, then either $Y =Z=\Pp^m$, $\dim(X)=2m-1$ and $X=\Pp(\mathcal{T}_{\Pp^m})$ or $Y$ and $Z$ have a $P$-bundle structure over a smooth curve $C$ and $X=Y\times_{C} Z$ is the fiber-product
of $Y$ and $Z$ over $C$.}
\end{rem}

\begin{rem} \label{rem:1astratto} {\normalfont For any integers $a, c \geq 0$ one clearly has $$\Pp_{\FF_a}(\Oc_{\FF_{a}}(1, c)^{ \oplus 2}) \simeq \Pp_{\FF_a}(\cO_{\FF_a}^{\oplus 2}) \simeq \Pp^1\times\FF_a$$(instead of using  notation as in 
\cite{H}, where $ \Pp_{\FF_a}(\cO_{\FF_a}^{\oplus 2}) \simeq \FF_a \times \Pp^1$, we use notation as in \cite{cas-fae-ma,cas-fae-ma2} to compare our more general results with those found in the aforementioned papers, dealing with the {\em Segre} - or even {\em del Pezzo} -  threefold $X \simeq \Pp^1 \times \FF_0 = \Pp^1 \times \Pp^1 \times \Pp^1$). Using therefore Proposition \ref{prop:joan}, one has also 
$$\Pp^1\times\FF_a \simeq \Pp_{\FF_a}(\cO_{\FF_a}^{\oplus 2}) \simeq \Pp_{\FF_0}(\cO_{\FF_0}\oplus\cO_{\FF_0}(0,-a)),$$i.e. in this case one has $b=0$. Thus, from the above set-up, when $c \geq a+1$ one has that $ \Pp^1\times\FF_a \simeq \Pp_{\FF_0}(\cO_{\FF_0}\oplus\cO_{\FF_0}(0,-a))$ is embedded by $h=\xi+ C_{0}+c F$ as a smooth threefold scroll in $\Pp^{4c-2a+3}$. 

When in particular also $a=0$, then $ \Pp^1 \times \FF_0 \simeq  \Pin{1}\times \Pin{1}\times\Pin{1}$, so setting $$\pi_i:  \Pin{1}\times \Pin{1}\times\Pin{1} \to \Pin{1}$$ the projection onto the $i$-th factor of the product, $ 1 \le i \leq 3$, and by $h_i := \pi_i^*(\mathcal O_{\Pin{1}}(1))$, we have that $$X \simeq \Pp^1 \times \Pp^1\times \Pp^1$$is embedded, for any integer $c \geq 1$, in $\Pp^{4c+3}$ 
as a {\em Segre-Veronese threefold} of degree $d= 6c$ and sectional genus $g= 2c-1$ by the line bundle $h$ as in \eqref{eq:2.may2}  which,  in this case, simply reads $$h := h_1 + h_2 + c \, h_3 = \mathcal O(1,1,c),$$similarly as in \cite{cas-fae-ma,cas-fae-ma2}. Denoting further by $$q_{i}: X \simeq \Pp^1 \times \Pp^1\times \Pp^1 \to \mathbb P^1 \times \mathbb P^1 \cong \FF_0$$the projection map forgetting the $i$-th factor $\mathbb P^1$, $1 \leq i \leq 3$, the {\em scroll map} $\varphi = \psi$ in the horizontal arrow as in \eqref{eq:maydiag} of Proposition \ref{prop:joan} coincides therefore with $q_{1}$, i.e. the projection onto the product of the last two $\mathbb P^1$-factors of $X \simeq \Pp^1 \times \Pp^1\times \Pp^1 $, whereas the vertical map $\phi$ coincides with $q_2$. 
}
\end{rem}

For any integers $a, b \geq0$, consider $\Pp_{\FF_a}(\E_a)$ with $\E_a$ as in \eqref{eq:2.may1} and, to ease notation, for any integers $\alpha_1, \alpha_2, \alpha_3 \in \mathbb Z$ we will set 
\begin{equation}\label{eq:may2.triples}
\mathcal O_a(\alpha_1, \; \alpha_2, \alpha_3) := \mathcal O_{\Pp_{\FF_a}(\E_a)} (\alpha_1, \; \alpha_2, \alpha_3)  = \alpha_1 \xi + \alpha_2 C_0 + \alpha_3 F. 
\end{equation}
The following formula will be useful for the forthcoming computations:

\begin{lemma}\label{lem:computationsjoan} Let $0 \leq \alpha_1 \leq \alpha_2$ be integers. Then, for any $i \geq 0$, one has 
    $$
    h^i(\Oc_a(\alpha_1,\; \alpha_2,\;\alpha_3))=\sum_{j=0}^{\alpha_1}\sum_{k=0}^{\alpha_2}h^i(\cO_{\Pp^1}(\alpha_3-jb -ka)).$$
\end{lemma}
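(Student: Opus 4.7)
The plan is to reduce cohomology on $X=\Pp_{\FF_a}(\E_a)$ to cohomology on $\Pp^1$ via the two successive projections $\varphi: X \to \FF_a$ and $\pi:\FF_a\to\Pp^1$, applying the projection formula (together with vanishing of higher direct images) at each step.

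First, I would push $\cO_a(\alpha_1,\alpha_2,\alpha_3)= \alpha_1\xi+\alpha_2 C_0+\alpha_3 F$ forward along $\varphi$. Since $\xi$ is the tautological line bundle on $\Pp_{\FF_a}(\E_a)$ and $\alpha_1\geq 0$, one has $R^q\varphi_*(\alpha_1\xi)=0$ for $q>0$ and $\varphi_*(\alpha_1\xi)=\mathrm{Sym}^{\alpha_1}(\E_a)$. Because $\E_a=\cO_{\FF_a}\oplus\cO_{\FF_a}(0,-b)$ is a split rank-two bundle, its symmetric power splits as
\[
\mathrm{Sym}^{\alpha_1}(\E_a)\;\cong\;\bigoplus_{j=0}^{\alpha_1}\cO_{\FF_a}(0,-jb).
\]
As $C_0 = \varphi^*(C_-)$ and $F=\varphi^*(f)$, the projection formula then gives
\[
\varphi_*\cO_a(\alpha_1,\alpha_2,\alpha_3)\;\cong\;\bigoplus_{j=0}^{\alpha_1}\cO_{\FF_a}(\alpha_2,\,\alpha_3-jb),
\]
with all higher direct images vanishing. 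Hence the Leray spectral sequence degenerates and for every $i\geq 0$,
\[
h^i\bigl(X,\cO_a(\alpha_1,\alpha_2,\alpha_3)\bigr)\;=\;\sum_{j=0}^{\alpha_1} h^i\bigl(\FF_a,\cO_{\FF_a}(\alpha_2,\,\alpha_3-jb)\bigr).
\]

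Next, I would iterate the same argument on $\pi:\FF_a=\Pp(\cO_{\Pp^1}\oplus\cO_{\Pp^1}(-a))\to\Pp^1$. Here $C_-$ plays the role of the tautological class, and since $\alpha_2\geq 0$ one has $R^q\pi_*(\cO_{\FF_a}(\alpha_2 C_-))=0$ for $q>0$ and
\[
\pi_*\cO_{\FF_a}(\alpha_2 C_-)=\mathrm{Sym}^{\alpha_2}(\cO_{\Pp^1}\oplus\cO_{\Pp^1}(-a))=\bigoplus_{k=0}^{\alpha_2}\cO_{\Pp^1}(-ka).
\]
By the projection formula again,
\[
\pi_*\cO_{\FF_a}(\alpha_2,\,\alpha_3-jb)\;\cong\;\bigoplus_{k=0}^{\alpha_2}\cO_{\Pp^1}(\alpha_3-jb-ka),
\]
with vanishing higher direct images, so Leray gives $h^i(\FF_a,\cO_{\FF_a}(\alpha_2,\alpha_3-jb))=\sum_{k=0}^{\alpha_2} h^i(\Pp^1,\cO_{\Pp^1}(\alpha_3-jb-ka))$.

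Substituting yields the claimed double-sum formula. The only real content is keeping track of twisting conventions and making sure the hypotheses $\alpha_1\geq 0$ and $\alpha_2\geq 0$ are used exactly where needed (to guarantee the vanishing of $R^{>0}\varphi_*$ and $R^{>0}\pi_*$ and to identify the pushforwards with symmetric powers); without them one would pick up contributions from $R^1\varphi_*$ and $R^1\pi_*$, and the simple double-sum expression would fail. This is the only place where care is required; otherwise the proof is a routine two-step application of the projection formula.
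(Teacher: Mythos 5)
Your argument is correct and is exactly the approach the paper intends: the paper's proof is the single line ``It is a straightforward application of the projection formula,'' and your two-step pushforward (along $\varphi$ using $\varphi_*(\alpha_1\xi)=\mathrm{Sym}^{\alpha_1}(\E_a)=\bigoplus_{j=0}^{\alpha_1}\cO_{\FF_a}(0,-jb)$, then along $\pi$ using $\pi_*(\alpha_2 C_-)=\bigoplus_{k=0}^{\alpha_2}\cO_{\Pp^1}(-ka)$, with vanishing higher direct images since $\alpha_1,\alpha_2\geq 0$) is the standard way to carry it out. Note only that your proof never actually needs the hypothesis $\alpha_1\leq\alpha_2$, just $\alpha_1,\alpha_2\geq 0$, which is consistent with how the lemma is applied in the paper.
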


\begin{proof} It is a straightforward application of the projection formula.
\end{proof}

\subsection{Preliminaries on Ulrich bundles}\label{ss:Ulrich}  Here we recall some generalities on Ulrich vector bundles on any smooth, irreducible projective variety. 

\begin{dfntn}\label{def:Ulrich} Let $X\subset \Pp^N$ be a smooth, irreducible, projective variety of dimension $n$  and let $H$  be a hyperplane section of $X$.
A vector bundle $\cU$  on $X$ is said to be  {\em  $\mathcal O_X(H)$-Ulrich}  if
\begin{eqnarray*}
H^{i}(X, \cU(-jH))=0 \quad \text{for all}   \quad i \geqslant 0 \quad  \text{and} \quad 1 \leqslant j \leqslant \dim X.
\end{eqnarray*}
 \end{dfntn}

\begin{rem}\label{Rmk1} {\normalfont (i) If the variety $X$ supports bundles which are ${\mathcal O}_X(H)$-Ulrich then one sets $uc_H(X)$, called the {\em  ${\mathcal O}_X(H)$-Ulrich complexity of $X$}, to be the minimum rank among possible  ${\mathcal O}_X(H)$-Ulrich vector bundles on $X$. 

\noindent
(ii) If $\cU$ is a ${\mathcal O}_X(H)$-Ulrich vector bundle on $X$, then 
$\cU^U:=\cU^{\vee}(K_X +(n+1)H)$ is a vector bundle of the same rank of $\cU$, which is also ${\mathcal O}_X(H)$-Ulrich and which is called the {\em Ulrich dual} of $\cU$. Thus,  if ${\mathcal O}_X(H)$-Ulrich  bundles of some given rank $r \geqslant 1$ on $X$ do exist, they come in pairs. 
}
 \end{rem}
 
\begin{dfntn}\label{def:special} Let $X\subset \Pp^N$ be a smooth, irreducible, projective variety of dimension $n$ and let $H$ denote a hyperplane section of $X$.  Let $\cU$  be  a rank-$2$ vector bundle  on  $X$ which is   ${\mathcal O}_X(H)$-Ulrich. Then $\cU$ is said to be {\em special} if $c_1(\cU) = K_{X} +(n + 1)H.$
\end{dfntn}

\noindent
Notice that,  because $\cU$ in Definition \ref{def:special} is of rank $2$, then $\cU^{\vee} \cong \cU (- c_1(\cU))$; therefore  for a rank-two ${\mathcal O}_X(H)$-Ulrich bundle  $\cU$ being {\em special}  is equivalent to  $\cU$  being isomorphic to its Ulrich dual bundle $\cU^U$.  

\bigskip

We now briefly remind well-known facts  concerning (semi)stability and slope-(semi)stability properties of Ulrich bundles (cf. \cite[Def.\;2.7]{c-h-g-s}). Let $\mathcal V$ be any vector bundle on $X$; recall that $\mathcal V$ is said to be {\em semistable} if for every non-zero coherent subsheaf
$\mathcal K \subset \mathcal V$, with $0 < {\rm rk}(\mathcal K) := \mbox{rank of} \; \mathcal K < {\rm rk}(\mathcal V)$, the inequality
$\frac{P_{\mathcal K}}{{\rm rk}(\mathcal K)} \leqslant  \frac{P_{\mathcal V}}{{\rm rk}(\mathcal V)}$ holds true, where 
$P_{\mathcal K}$ and $P_{\mathcal V}$ are their Hilbert polynomials. Furthermore, $\mathcal V$ is said to be {\em stable} if 
the strict inequality above holds.

We recall that the {\em slope} (w.r.t. $\mathcal O_X(H)$) of a vector  bundle $\mathcal V$  is defined to be \linebreak $\mu(\mathcal V) := \frac{c_1(\mathcal V) \cdot H^{n-1}}{{\rm rk}(\mathcal U)}$;  then $\mathcal V$ is said 
to be {\em $\mu$-semistable}, or even {\em slope-semistable} (w.r.t. $\mathcal O_X(H)$), if for every non-zero coherent subsheaf
$\mathcal K \subset \mathcal V$ with $0 < {\rm rk}(\mathcal K)   < {\rm rk}(\mathcal V)$, one has 
$\mu (\mathcal K) \leqslant \mu(\mathcal V)$, whereas $\mathcal V$ is said to be {\em $\mu$-stable}, or  {\em slope-stable}, if the strict inequality holds. 

The two definitions above of (semi)stability are in general related as follows (cf.\,e.g.\,\cite[\S\;2]{c-h-g-s}): 
\begin{eqnarray*}\mbox{slope-stability} \Rightarrow \mbox{stability} \Rightarrow \mbox{semistability} \Rightarrow \mbox{slope-semistability}.\end{eqnarray*}
When the bundle in question is in particular ${\mathcal O}_X(H)$-Ulrich (and in this case we denote it by $\mathcal U$ to remind Ulrichness) 
 then one has   
 \begin{eqnarray} \label{slope}
\mu(\mathcal U)= \deg(X) + g-1,
\end{eqnarray} where  $\deg (X)$ is the degree of $X$ in the embedding given by $\mathcal O_X(H)$ and where $g$ is the sectional genus of $(X,\mathcal O_X(H))$ (see e.g. \cite[Prop.\,3.2.5]{CMP}), and the following more precise situation holds:

\begin{theo}\label{thm:stab} (cf. \cite[Thm.\;2.9 and Lem.\;4.2]{c-h-g-s}) Let $X\subset \Pp^N$ be a smooth, irreducible, projective variety of dimension $n$ and let $H$ be a hyperplane section of $X$. Let $\cU$ be a rank-$r$ vector bundle on $X$ which is ${\mathcal O}_X(H)$-Ulrich. Then: 

\noindent
(a) $\mathcal U$ is semistable, so also slope-semistable;

\noindent
(b) If $0 \to  \mathcal F \to \mathcal  U  \to \mathcal G \to 0$ is an exact sequence of coherent sheaves with $\mathcal G$ 
torsion-free, and $\mu(\mathcal F) = \mu(\mathcal U)$, then $\mathcal F$ and $\mathcal G$ are both vector 
bundles which are ${\mathcal O}_X(H)$-Ulrich. 

\noindent
(c) If $\mathcal U$ is stable then it is also slope-stable. In particular, the notions of stability and slope-stability coincide 
for Ulrich bundles. 

\noindent 
(d) If $h^1(\cO_X)=0$, any indecomposable rank two Ulrich bundle $\cU$ is simple, i.e, \linebreak $\hom(\cU,\cU):= \dim({\rm End} (\mathcal U)) = 1$.

\end{theo}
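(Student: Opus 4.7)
The plan is to leverage the very rigid form of the Hilbert polynomial of an Ulrich sheaf. First I would record that, if $\cU$ is $\cO_X(H)$-Ulrich of rank $r$, then the vanishings $h^i(\cU(-jH))=0$ for $1\leq j \leq n$ combined with Riemann--Roch force the Hilbert polynomial $P_{\cU}(t)$ to equal $r\cdot \deg(X)\binom{t+n}{n}$; in particular the reduced Hilbert polynomial $p_{\cU}$ depends only on $(X,H)$ and not on the particular Ulrich sheaf $\cU$. This numerical rigidity is the backbone of everything that follows.

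For part (a), I would take a saturated subsheaf $\cF\subset\cU$ of rank $s<r$. The inclusion $\cF(-H)\hookrightarrow \cU(-H)$ combined with the Ulrich vanishing $h^0(\cU(-H))=0$ gives $h^0(\cF(-H))=0$; iterating this idea with the further vanishings $h^i(\cU(-jH))=0$ for $1\leq j\leq n$ and chasing long exact cohomology sequences yields the reduced Hilbert polynomial inequality $p_{\cF}\leq p_{\cU}$, hence semistability, from which slope-semistability follows formally. For part (b), additivity of slopes in $0\to\cF\to\cU\to\cG\to 0$ under $\mu(\cF)=\mu(\cU)$ forces $\mu(\cG)=\mu(\cU)$ as well, so (a) applied to both factors forces $\cF$ and $\cG$ to realize the common reduced Hilbert polynomial of Ulrich sheaves; chasing the Ulrich vanishings on $\cU$ through the long exact cohomology sequence of the exact sequence twisted by $-jH$ then forces $H^i(\cF(-jH))=0=H^i(\cG(-jH))$ for all $i$ and $1\leq j\leq n$, making both Ulrich. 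Local freeness of $\cF$ and $\cG$ follows from the torsion-freeness of $\cG$ together with the local freeness of $\cU$.

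Part (c) is then immediate: if $\cU$ were stable but not slope-stable, a proper saturated $\cF\subset\cU$ with $\mu(\cF)=\mu(\cU)$ would be Ulrich by (b) and thus share the reduced Hilbert polynomial of $\cU$ by the opening observation, contradicting stability. The real work is in (d): since $\cU$ is indecomposable and $\operatorname{End}(\cU)$ is a finite-dimensional $\mathbb C$-algebra, the Krull--Schmidt/Atiyah argument makes this algebra local, so a putative non-scalar endomorphism would yield a non-zero nilpotent $N\in\operatorname{End}(\cU)$ with $N^2=0$. Then $\operatorname{Im}(N)\subset \ker(N)$ are both rank-one saturated subsheaves of $\cU$ with slope equal to $\mu(\cU)$, so by (b) each is an Ulrich line bundle, and I would argue they must coincide with a common $\cL$. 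The resulting self-extension $0\to\cL\to\cU\to\cL\to 0$ is then classified by $\Ext^1(\cL,\cL)\cong H^1(\cO_X)=0$, so it must split, contradicting indecomposability. The main obstacle I foresee is the honest identification of $\ker(N)$ with $\operatorname{Im}(N)$ as saturated subsheaves (not merely as coherent subsheaves of the same generic rank), which is precisely where the full strength of (b) has to be invoked.
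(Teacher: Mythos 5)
The paper offers no proof of this statement at all: it is quoted directly from Casanellas--Hartshorne--Geiss--Schreyer \cite[Thm.\,2.9 and Lem.\,4.2]{c-h-g-s}, so your proposal can only be measured against the argument in that reference. Your opening observation (the Hilbert polynomial of a rank-$r$ Ulrich sheaf is $r\deg(X)\binom{t+n}{n}$, so all Ulrich sheaves share one reduced Hilbert polynomial) is correct, and your derivations of (c) from (a)--(b) and of (d) from (a)--(b) are essentially the right ones: in (d) the identification $\operatorname{Im}(N)=\ker(N)$ that worries you is settled by noting that (b) makes both of them Ulrich line bundles, an injection between line bundles of equal $H$-degree is an isomorphism because its cokernel would be supported on an effective divisor of $H$-degree zero, and then $\Ext^1(\cL,\cL)\cong H^1(\cO_X)=0$ splits the self-extension. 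This is exactly the mechanism of \cite[Lem.\,4.2]{c-h-g-s}.

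The genuine gap sits in (a) and (b), on which everything else rests. For (a), the only thing the inclusion $\cF\subset\cU$ gives you is $h^0(\cF(-jH))=0$ for $1\leq j\leq n$; this says nothing about $h^0(\cF(tH))$ for $t\gg 0$ \emph{normalized by rank}, which is what $p_\cF\leq p_\cU$ amounts to, and there is no long exact sequence to chase because the cohomology of the quotient $\cU/\cF$ is completely unconstrained. The missing ingredient is the other characterization of Ulrichness: for a finite linear projection $\pi:X\to\Pp^n$ one has $\pi_*\cU\cong\cO_{\Pp^n}^{rd}$, finite pushforward preserves Hilbert polynomials and multiplies ranks by $d=\deg\pi$, so semistability of $\cU$ reduces to the (poly)stability of a trivial bundle on $\Pp^n$. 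The same device is indispensable in (b): your cohomology chase only yields the isomorphisms $H^i(\cG(-jH))\cong H^{i+1}(\cF(-jH))$ together with $H^0(\cF(-jH))=0$, and $\chi(\cF(-jH))=0$ does not force the individual groups to vanish; moreover the step ``$\mu(\cF)=\mu(\cU)$ forces $p_\cF=p_\cU$'' is unjustified, since the slope only controls the sub-leading coefficient of the reduced Hilbert polynomial. In \cite{c-h-g-s} one instead shows that $\pi_*\cF\subset\cO_{\Pp^n}^{rd}$ is, under the hypotheses, a trivial subsheaf with trivial quotient, whence $\cF$ and $\cG$ are Ulrich by the pushforward criterion. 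Without the projection argument, (a) and (b) --- and therefore (c) and (d), which you correctly deduce from them --- remain unproved.
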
 
\vspace{3mm}

%%%%%%%%%%%%%%%%%%%
%
% RICHIAMO TEOREMA pullback
%
%%%%%%%%%%%%%%%%%%%%%
For the  reader convenience we recall the following theorem (see \cite[Theorem 2.4]{f-lc-pl}).

\begin{theo}\label{pullback} Let $(S, H)$ be a polarized surface with $H$ very ample and let $\mathcal E$ be a rank-two vector bundle on $S$ such that $\mathcal E$ is  very ample and spanned. Let $\mathcal F$ be a rank $r \geqslant 1$ vector bundle on $S$. Let $(X,h) \cong (\Pp(\mathcal E), \Oc_{\Pp(\mathcal E)}(1))$ be a threefold scroll over $S$, where 
$h$ is the very-ample {\em tautological polarization},  and let $X \xrightarrow{\varphi} S$ denote the scroll map. Then  the vector bundle $\mathcal U:= h \otimes \varphi^*(\mathcal F)$ is $h$-Ulrich if and only if the rank-$r$ vector bundle $\mathcal F$ on $S$ is such that
\begin{equation}\label{needed to pullback}
\begin{array}{ccc}
  H^i(S,\mathcal F)=0 & \text{and} & H^i(S,\mathcal F(-c_1(\mathcal E)))=0, \; 0 \leqslant i \leqslant 2.
\end{array}
\end{equation} In particular, if $c_1(\mathcal E)$ is very ample on $S$, then the rank-$r$ vector bundle $ \mathcal U= h \otimes  \varphi^* (\mathcal F)$ is $h$-Ulrich on $X$ 
 if and only if the rank-$r$ vector bundle $\mathcal F (c_1(\mathcal E))$ is $c_1(\mathcal E)$-Ulrich on $S$.  
\end{theo}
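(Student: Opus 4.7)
The plan is to translate the Ulrich vanishing conditions on $X$ directly into cohomological conditions on $S$ via the projection formula for the $\Pp^1$-bundle $\varphi \colon X = \Pp(\mathcal E) \to S$, since $\mathcal U = h \otimes \varphi^*(\mathcal F)$ has a particularly simple structure with respect to $\varphi$.

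First, I would write $\mathcal U(-jh) = (1-j)h \otimes \varphi^*(\mathcal F)$ so that the Ulrich conditions $H^i(X,\mathcal U(-jh))=0$ for $1\leq j\leq 3$ and $i\geq 0$ become vanishing statements for the bundles $\varphi^*\mathcal F$, $\varphi^*\mathcal F\otimes\mathcal O_X(-h)$, and $\varphi^*\mathcal F\otimes\mathcal O_X(-2h)$. Next, I would record the standard direct images for the $\mathbb P^1$-bundle: $\varphi_*\mathcal O_X(0)=\mathcal O_S$, $\varphi_*\mathcal O_X(-h)=0$, $\varphi_*\mathcal O_X(-2h)=0$, together with $R^1\varphi_*\mathcal O_X(kh)=0$ for $k\in\{0,-1\}$ and $R^1\varphi_*\mathcal O_X(-2h)=\det(\mathcal E)^{\vee}=\mathcal O_S(-c_1(\mathcal E))$; here one uses the relative Euler sequence (or, equivalently, relative Serre duality $R^1\varphi_*\mathcal O_X(kh)\cong (\varphi_*\mathcal O_X(-k-2)h)^\vee\otimes\det(\mathcal E)^\vee$).

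By the projection formula applied to the locally free sheaf $\varphi^*\mathcal F$, these computations yield
\begin{equation*}
\varphi_*(\varphi^*\mathcal F)=\mathcal F,\qquad \varphi_*(\varphi^*\mathcal F(-h))=0,\qquad \varphi_*(\varphi^*\mathcal F(-2h))=0,
\end{equation*}
with $R^1\varphi_*$ vanishing in the first two cases and equal to $\mathcal F(-c_1(\mathcal E))$ in the third. Since the higher direct images vanish or collapse into a single row, the Leray spectral sequence degenerates trivially and gives $H^i(X,\varphi^*\mathcal F)=H^i(S,\mathcal F)$, $H^i(X,\varphi^*\mathcal F(-h))=0$ automatically, and $H^i(X,\varphi^*\mathcal F(-2h))=H^{i-1}(S,\mathcal F(-c_1(\mathcal E)))$. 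Requiring all of these to vanish for $i\geq 0$ is then equivalent to the two lines of \eqref{needed to pullback}, which proves the main equivalence.

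For the "in particular" assertion, assuming $c_1(\mathcal E)$ very ample on $S$ I would set $\mathcal G:=\mathcal F(c_1(\mathcal E))$, so that $\mathcal F=\mathcal G(-c_1(\mathcal E))$ and the conditions \eqref{needed to pullback} become $H^i(S,\mathcal G(-c_1(\mathcal E)))=0$ and $H^i(S,\mathcal G(-2c_1(\mathcal E)))=0$ for $0\leq i\leq 2$, which by Definition \ref{def:Ulrich} applied to the polarized surface $(S,c_1(\mathcal E))$ is precisely the statement that $\mathcal G=\mathcal F(c_1(\mathcal E))$ is $c_1(\mathcal E)$-Ulrich on $S$. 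No real obstacle arises: the whole argument is bookkeeping with the projection formula, and the only care needed is to identify $R^1\varphi_*\mathcal O_X(-2h)$ with $\mathcal O_S(-c_1(\mathcal E))$, which is what produces the twist appearing in the second condition of \eqref{needed to pullback}.
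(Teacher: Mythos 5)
Your argument is correct and is essentially the standard proof: the paper itself does not reprove this statement but recalls it from \cite[Theorem 2.4]{f-lc-pl}, where the argument is exactly this Leray/projection-formula bookkeeping, with the only nontrivial input being the identification $R^1\varphi_*\mathcal O_X(-2h)\cong\det(\mathcal E)^{\vee}$ via relative duality, which you handle correctly. The reduction of the ``in particular'' part to Definition \ref{def:Ulrich} for the polarized surface $(S,c_1(\mathcal E))$ is also as intended.
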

\begin{rem}\label{cl:pullbackrk2} {\normalfont With notation as in Theorem \ref{pullback}, let $\mathcal G$ be a rank-$r$ vector bundle on $X$ which is of the form $\mathcal G := h \otimes \varphi^*(\mathcal F)$, for some rank-$r$ vector bundle $\mathcal F$ on the base surface $S$. Then one has 
$$c_i(\mathcal G(-h)) = \varphi^*(c_i(\mathcal F)), \; 0 \leq i \leq r.$$When in particular $S = \FF_a$ (or even $\FF_b$) for some $b \geq a \geq 0$ then, with notation as in \eqref{eq:secondchern}, one must have $$c_2(\mathcal G(-h)) = \alpha\,C_0.F, \;\; {\rm for \, some\;} \alpha \in \mathbb Z.$$}
\end{rem}

\medskip

%%%%%%%%%%%%%%%%%%%
%
% ULRICH LINE BUNDLES
%
%%%%%%%%%%%%%%%%%%%%%

\section{Ulrich Line Bundles on threefold scrolls $X$}

In this section we  find  necessary and sufficient conditions ensuring 
existence and classification of $h$-Ulrich line bundles on the threefold scroll $(X, \mathcal O_X(1)) \simeq  (\Pp_{\FF_a}(\E_a), h)$ as in \S\,\ref{ss:3folds}, i.e. with $a, b \geq 0$, $c \geq a+b+1$ integers and $h = \xi+C_0+cF$. To do this, we use notation as in \eqref{eq:may2.triples} so that 
$$h := \mathcal O_a(1,1,c), \; \omega_X \simeq \mathcal O_a(-2,\, -2, \, -(a+b+2)),$$whereas any line bundle $\mathcal A$ on $X$ will be simply denoted by 
$$\mathcal A := \mathcal O_a(\alpha_1, \alpha_2, \alpha_3), \; {\rm for \; some} \; \alpha_i \in \mathbb Z, \; 1 \leq i \leq 3.$$

 In view of the isomorphism as in Proposition \ref{prop:joan} which, for any integers $a,b \geq 0$ and $c \geq a+b+1$,   relates the tautological embeddings  
$$\mathcal O_a(1,1,c) = h_a = h = h_b = \mathcal O_b(1,1,c)$$   as from Corollary \ref{cor:ffOct}, the following result classifies $h$-Ulrich line bundles on $X$ up to exchanging the role between $a$ and $b$.  

 %%%%%%%%%%%%%%%%%
%
%. TEOREMA  1
%
%%%%%%%%%%%%%%%%%%%

\begin{theo}\label{prop:LineB} Let $(X, \mathcal O_X(1)) \simeq  (\Pp_{\FF_a}(\E_a), h)$ be a smooth, irreducible threefold scroll in $\Pp^n$ as in \eqref{eq:Xe}. Then its {\em Ulrich complexity} w.r.t. $h$ is $uc_h(X)=1$, i.e. $X$ supports $h$-Ulrich line bundles. 
Moreover, such line bundles are as follows: 

\begin{itemize}
\item[(i)] for any integers $a,b\neq 0$ and $c \geq a+b+1$, there are exactly two $h$-Ulrich line bundles on $X$, namely $N= \Oc_a(2,0,2c-a-1)$ and its Ulrich dual $N^U= \Oc_a (0,2,2c-b-1)$, as above;

\item[(ii)] for any integers $b>a=0$ and $c \geq b+1$, there are exactly four $h$-Ulrich line bundles on $X$, namely $N= \Oc_0(2,0,2c-1)$ and its Ulrich dual $N^U = \Oc_0(0,2,2c-b-1)$   as in (i),  together with $L= \mathcal O_0(1,0,3c-b-1)$ and its Ulrich dual $L^U=  \Oc_0(1, 2,c-1)$;

\item[(iii)]  if $a=b=0$,   then for any $c \geq 1$   there are exactly six $h$-Ulrich line bundles on $X$, 
namely $N= \Oc_0(2,0,2c-1)$ and its Ulrich dual $N^U = \Oc_0(0,2,2c-1)$  as well as $L= \mathcal O_0(1,0,3c-1)$ and its Ulrich dual $L^U= \mathcal O_0(1, 2,c-1)$ as in (ii), moreover there are  $M=\Oc_0(2,1,c-1)$ and its Ulrich dual $M^U=\Oc_0(0,1,3c-1)$.
 \end{itemize}
\end{theo}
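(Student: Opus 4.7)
The plan is to parameterize each line bundle on $X$ as $\mathcal{A}=\mathcal{O}_a(\alpha_1,\alpha_2,\alpha_3)$ in the notation of \eqref{eq:may2.triples} and determine exactly which triples $(\alpha_1,\alpha_2,\alpha_3)\in\mathbb Z^3$ satisfy the twelve vanishing conditions $H^i(X,\mathcal{A}(-jh))=0$ for $i\geq 0$ and $j\in\{1,2,3\}$. The main tool is the scroll map $\psi:X\to\FF_a$: combining the Leray spectral sequence with the projection formula, and using that
\[
\psi_*\xi^k=\bigoplus_{l=0}^{k}\mathcal{O}_{\FF_a}(0,-lb)\ (k\geq 0),\quad R^q\psi_*\xi^{-1}=0,\quad R^1\psi_*\xi^k=\bigoplus_{l=1}^{-k-1}\mathcal{O}_{\FF_a}(0,lb)\ (k\leq -2),
\]
each Ulrich condition on $X$ reduces to the vanishing, for every $l$ in the appropriate range, of $H^i\bigl(\FF_a,\mathcal{O}_{\FF_a}(\alpha_2-j,\alpha_3-jc-lb)\bigr)$ (or the analogous $R^1$-shifted twist when $\alpha_1-j\leq -2$). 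Existence, i.e.\ $uc_h(X)=1$, drops out of the enumeration; it is also immediately visible on $N=\mathcal{O}_a(2,0,2c-a-1)$, for which the three slices $j=1,2,3$ fall respectively into automatic vanishing (because $\alpha_2-1=-1$), the single summand $\mathcal{O}_{\FF_a}(-2,-a-1)$, and the vanishing regime $R^q\psi_*\xi^{-1}=0$.

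The next step is a clean characterisation of when all cohomology of $\mathcal{O}_{\FF_a}(p,q)$ vanishes. Applying $\pi:\FF_a\to\Pp^1$ together with Serre duality on $\FF_a$ (and Lemma \ref{lem:computationsjoan} when $p\geq 0$) one finds $H^i(\FF_a,\mathcal{O}_{\FF_a}(p,q))=0$ for every $i$ if and only if one of the following holds: $p=-1$; or $p=0$ with $q=-1$; or $p=-2$ with $q=-a-1$; or $p\geq 1$ (resp.\ $p\leq -3$) with $a=0$ and $q=-1$. The crucial combinatorial point is that when $\operatorname{Sym}^{\alpha_1-j}\E_a$ or $R^1\psi_*\xi^{\alpha_1-j}$ has more than one summand, the varying $q$-twists by $-lb$ (respectively $lb$) can be killed simultaneously only if $b=0$. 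Running the analogous analysis via the second projection $\phi:X\to\FF_b$, which is made available by the identification $\mathcal{O}_a(\alpha_1,\alpha_2,\alpha_3)=\mathcal{O}_b(\alpha_2,\alpha_1,\alpha_3)$ from Corollary \ref{cor:ffOct}, then forces $a=0$ at the symmetric places.

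Combining the three slices $j=1,2,3$ excludes $\alpha_1\geq 3$: all three slices fall in the ``$\alpha_1-j\geq 0$'' regime, and reconciling the resulting systems with $c\geq a+b+1$ yields a contradiction (one pins down $\alpha_3$ from $j=1$, then the $j=2$ or $j=3$ slice becomes infeasible). The range $\alpha_1\leq -1$ is ruled out by Ulrich duality, since $\mathcal{A}^U=\mathcal{A}^\vee\otimes\omega_X(4h)=\mathcal{O}_a(2-\alpha_1,2-\alpha_2,4c-a-b-2-\alpha_3)$ is again Ulrich and has first index $\geq 3$. Hence $\alpha_1\in\{0,1,2\}$ and, symmetrically via Corollary \ref{cor:ffOct}, also $\alpha_2\in\{0,1,2\}$. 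A finite check of the nine pairs $(\alpha_1,\alpha_2)\in\{0,1,2\}^2$ completes the classification: the diagonal pairs $(0,0),(1,1),(2,2)$ produce inconsistent systems under $c\geq a+b+1\geq 1$, while the six off-diagonal pairs give precisely the bundles of the statement. Specifically, $(2,0)$ and $(0,2)$ impose no constraint on $a,b$ and yield $\alpha_3=2c-a-1$ and $\alpha_3=2c-b-1$; $(1,0)$ and $(1,2)$ force $a=0$, with $\alpha_3=3c-b-1$ and $\alpha_3=c-1$; $(0,1)$ and $(2,1)$ force $b=0$, with $\alpha_3=3c-a-1$ and $\alpha_3=c-1$. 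Intersecting with the hypotheses $a\neq 0\neq b$, $a=0<b$, and $a=b=0$ reads off parts (i), (ii), and (iii) respectively, and the Ulrich-duality pairings recorded in the statement follow from a direct check via \eqref{formulaKX}.

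The main obstacle will be the careful bookkeeping of the vanishing criterion on $\FF_a$ across the varying families of twists $\{\mathcal{O}_{\FF_a}(\alpha_2-j,\alpha_3-jc-lb)\}_l$, and keeping the dichotomy ``only one summand or else $b=0$'' straight simultaneously across the three slices; the exclusion of $\alpha_1\geq 3$ also requires some care, since then all three slices are active and one must combine them with the very-ampleness inequality $c\geq a+b+1$ to produce a contradiction.
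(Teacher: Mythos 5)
Your proposal is correct, and the endpoint classification (including the forced constraints $a=0$ for the pairs $(1,0),(1,2)$ and $b=0$ for $(0,1),(2,1)$, and the exclusion of the diagonal pairs via $c\geq a+b+1$) agrees with the paper's; but the route is genuinely more self-contained than the one the paper takes. The paper does not re-derive the bound $0\leq\alpha_1\leq 2$: it imports it from \cite[Corollary 2.2]{f-lc-pl}, whereas you prove it from scratch by combining the three Leray slices with Ulrich duality — this is the most delicate part of your argument (all three slices are active when $\alpha_1\geq 3$, and one must also quantify over $\alpha_2$), but it does close. For $\alpha_1=1$ the paper does not compute cohomology at all: it observes that $\mathcal O_a(1,\alpha_2,\alpha_3)=h\otimes\varphi^*(\mathcal O_{\FF_a}(\alpha_2-1,\alpha_3-c))$ and invokes Theorem \ref{pullback} to reduce to the classification of $c_1(\mathcal E'_a)$-Ulrich line bundles on $\FF_a$, which it quotes from \cite[Theorem 2.1]{a-c-mr} (empty for $a>0$) and \cite[Example 2.1]{cas} (two bundles for $a=0$); the case $\alpha_2=1$ is then handled, exactly as in your argument, by the second scroll structure from Proposition \ref{prop:joan} and Corollary \ref{cor:ffOct}. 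Finally, for the residual pairs $(0,0),(0,2),(2,0),(2,2)$ the paper first reduces to $(0,2),(2,2)$ by Ulrich duality and pins down $\alpha_3$ using only the single \emph{necessary} numerical condition $h^0(\mathcal A)=\deg X$ via Lemma \ref{lem:computationsjoan}, then verifies a posteriori that the surviving candidate $N^U$ is Ulrich; you instead run the full vanishing criterion on every pair, which makes the "exactly" part of the statement immediate without a separate verification step. What your approach buys is independence from the external classifications on Hirzebruch surfaces and a uniform mechanism (the explicit $\psi_*\xi^k$ and $R^1\psi_*\xi^k$ decompositions plus the complete vanishing criterion for $\mathcal O_{\FF_a}(p,q)$); what the paper's buys is brevity and a conceptual explanation of \emph{why} $L,L^U$ (and, after the involution, $M,M^U$) exist, namely as genuine pullbacks in the sense of Theorem \ref{pullback} — a fact it then reuses in Remark \ref{rem:beauville} and in the rank-two analysis.
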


\begin{proof} 
Let $\cA=\Oc_a(\alpha_1, \alpha_2, \alpha_3)$ be any line bundle on $X$, with $\alpha_i \in \mathbb Z$, $1 \leq i \leq 3$. If $\mathcal A$ is $h$-Ulrich then, from \cite[Corollary\,2.2]{f-lc-pl}, we must have $0\leq \alpha_1\leq 2$.

If $\alpha_1=1$, then one can write $$\mathcal A = \Oc_a(1, \alpha_2, \alpha_3) =  \Oc_a(1, 1, c) \otimes \Oc_a(0, \alpha_2-1, \alpha_3 -c) = h \otimes \varphi^*(\Oc_{\FF_a} (\alpha_2-1, \alpha_3-c)).$$From Remark \ref{changepol}, recall that 
$X \simeq \Pp_{\FF_a} (\E_a) \simeq \Pp_{\FF_a} (\E'_a)$, where $c_1(\E'_a) = 2 C_- + (2c-b) f = \Oc_{\FF_a}(2,2c-b)$ is very ample on $\FF_a$ as $c \geq a+b+1$ by \eqref{eq:2.may3}. Therefore, by Theorem \ref{pullback}, such a line bundle $\mathcal A$ is $h$-Ulrich on $X$ if and only if the line bundle 
$$\Oc_{\FF_a} (\alpha_2-1, \alpha_3-c) \otimes c_1(\E'_a) = \Oc_{\FF_a} (\alpha_2+1, \alpha_3+c-b)$$is $c_1(\E'_a) = \Oc_{\FF_a}(2,2c-b)$-Ulrich on $\FF_a$. 

If $a>0$   there are no such line bundles on $\FF_a$,  by \cite[Theorem 2.1]{a-c-mr}. 
 
If otherwise $a=0$, these line bundles are classified in \cite[Example 2.1]{cas}; namely we have that $\Oc_{\FF_0} (\alpha_2+1, \alpha_3+c-b)$ is either $\Oc_{\FF_0} (1, 4c-2b-1)$ or $\Oc_{\FF_0} (3, 2c-b-1)$, i.e. 
\[
(\alpha_2, \alpha_3): = 
\left\{
\begin{array}{l}
(0, 3c-b-1) \\ 
\\
(2,c-1)
\end{array}
\right.
\]
which give on $X$ either $\mathcal A = L = \Oc_0(1,0,3c-b-1)$ or $\mathcal A = L^U = \Oc_0(1,2,2c-1)$ as in  part (ii) and (iii) in the statement, which   are $h=h_0$-Ulrich line bundles on $X$ existing  for $a= 0$ and for any integers $b \geq 0$ and $c \geq b+1$. 

\medskip

To  proceed  with our analysis, we remind that Proposition \ref{prop:joan}  ensures that  
\begin{equation}\label{eq:***}
X \simeq \Pp_{\FF_a}(\Oc_{\FF_a} \oplus \Oc_{\FF_a}(-b) ) \simeq  \Pp_{\FF_b}(\Oc_{\FF_b} \oplus \Oc_{\FF_b}(-a) )
\end{equation} holds true, for any pair $(a,b) \in (\mathbb Z_{\geq 0})^{\oplus 2}$, together with identification between the corresponding very-ample polarizations as in Corollary \ref{cor:ffOct}, namely: 
$$\mathcal O_a(1,1,c)= \xi + C_0 + c F =: h_a = h = h_b:=  \eta + D_0 + c G = \mathcal O_b(1,1,c),$$ as it follows from the change of basis matrix $\widehat{M}$ therein. 

Therefore,   up to exchanging the roles between  $a$ and $b$ and, correspondingly, the roles between  $\xi   = D_0$  and $C_0 = \eta$ related to the two threefold-scroll structures of $X$, it is clear that finding $h_a$-Ulrich line bundles $\Oc_a(\alpha_1, \alpha_2, \alpha_3)$ on $X$ 
is equivalent to find $h_b$-Ulrich line bundles $\Oc_b(\alpha_2, \alpha_1, \alpha_3)$ on $X$. Hence, once again by \cite[Corollary\,2.2]{f-lc-pl}, we must have also $0 \leq \alpha_2 \leq 2$.

\medskip 
 From the previous case, we may focus therefore on $\alpha_1 \in \{0,2\}$. When $\alpha_2 =1$, we exactly know   the behavior of $h_b$-Ulrich line bundles of the form $\Oc_b(1, \alpha_1, \alpha_3)$ on $X$,   i.e. when $X$ is viewed with its threefold scroll structure over $\mathbb F_b$.  

Namely, if $b >0$, from the previous discussion there are no $h_b$-Ulrich line bundles on $X$ of   of the form $\Oc_b(1, \alpha_1, \alpha_3)$,    for any integer $a \geq 0$.   Hence, for any $a \geq 0$,   there are no $h_a$-Ulrich line bundles of the form $\Oc_a(\alpha_1, 1, \alpha_3)$,   with $\alpha_1 \in \{0,2\}$ and $b>0$.

When otherwise $b=0$, as above we may conclude that   for $\alpha_2=1$, we have $h_0$-Ulrich line bundles on $X$ of the form  $\Oc_0(1, \alpha_1, \alpha_3)$, for
\[
(\alpha_1, \alpha_3): = 
\left\{
\begin{array}{l}
(0, 3c-a-1) \\ 
\\
(2,c-1)
\end{array}
\right.
\]  for any $a \geq 0$. This implies that, when $X$ is endowed with its threefold scroll structure over $\mathbb F_0$, we have $h_0$-Ulrich line bundles $L = \mathcal O_0(1,0,3c-a-1)$ and $L^U = 
\mathcal O_0(1,2,c-1)$ as in $(ii)$ and $(iii)$ of the statement, but with $a$ replaced by $b$ and viceversa, namely for any integer $a \geq 0=b$. Hence, when otherwise $X$ is viewed with its threefold scroll structure over $\mathbb F_a$ as from Proposition \ref{prop:joan}, such line bundles actually correspond to the $h_a$-Ulrich line bundles 
\begin{equation}\label{eq:preciso}
\mathcal O_a(2, 1,c-1) \;\; {\rm and} \;\; \mathcal O_a(0,1,3c-a-1), \; \forall \; a \geq 0.
\end{equation} Up to exchange the role between $a \geq 0$ and $b=0$ and correspondingly the choice of the polarization on $X$ between $h_a$ and $h_b= h_0$, we may assume in this case $b \geq a=0$. 

In this set-up, when $b>a=0$ the line bundles \eqref{eq:preciso} are merely a rewriting of the earlier ones $L$ and $L^U$ via a switch to the other threefold scroll  structure of $X$ over $\mathbb F_b$ with the corresponding switch of the polarization from $h_0$ to $h_b$, implicitly by changing the notation from $a$ to $b$.  

When otherwise $a=b=0$, the line bundles \eqref{eq:preciso} give rise to two extra $h=h_0$-Ulrich line bundles on $X$, with its threefold scroll structure over the base $\FF_0$ once and for all, and such extra $h=h_0$-Ulrich line bundles are precisely $M$ and $M^U$ as in part $(iii)$ of the statement.

\medskip

We are then left with the cases 
$$(\alpha_1, \alpha_2) = (0,0), \, (0,2), \, (2,0), \, (2,2);$$on the other hand, natural Ulrich duality gives 
$$\Oc_a(0,0,\alpha_3)^U = \Oc_a(2,2,4c-a-b-2 - \alpha_3) \;\; {\rm and} \;\; \Oc_a(0,2,\alpha_3)^U = \Oc_a(2,0,4c-a-b-2 - \alpha_3).$$Therefore, we may reduce to e.g.  
$$(\alpha_1, \alpha_2) = (0,2), \,(2,2).$$For any such pair, take 
$\mathcal A = \Oc_a(\alpha_1,\alpha_2, \alpha_3)$ on $X$ which, if it is $h_a$-Ulrich, in particular it must satisfy $$h^0(\mathcal A) = d = \deg(X) = 3 (2c-a-b).$$Since 
for any such pair we have $0 \leq \alpha_1 \leq \alpha_2$, we may use Lemma \ref{lem:computationsjoan} to perform computations for the previous equality and we deduce 
that there are no $h_a$-Ulrich line bundles on $X$ with $(\alpha_1, \alpha_2) = (2,2)$ (and so, by Ulrich duality, same conclusion holds for $(\alpha_1, \alpha_2) = (0,0)$) whereas, 
$(\alpha_1, \alpha_2) = (0,2)$ gives only $\alpha_3 = 2c-b-1$, for any integers 
$a, b \geq 0$. This triple gives $N^U = \Oc_a(0,2,2c-b-1)$ as in the statement, which actually  satisfy conditions to be a $h$-Ulrich bundle on $X$, i.e. 
$$h^i(N^U(-j\,h_a)) = h^i(\Oc_a(-j, 2-j, 2c-b-1-j))=0, \forall \; i \geq 0, \; 1 \leq j \leq 3.$$Thus, by Ulrich duality, we get also the line bundle $N = \Oc_a(2,0,2c-a-1)$. 
\end{proof} 

\medskip

\begin{rem}\label{rem:3.2} {\normalfont  We want to stress that Theorem \ref{prop:LineB} classifies $h$-Ulrich line bundles on $X$ up to exchange the role between $a$ and $b$, a possibility ensured by the existence of the {\em double structure} of $X$ as threefold scroll arising from Proposition \ref{prop:joan} and Corollary \ref{cor:ffOct}; this explains assumption $b \geq a =0$ in the statement of Theorem \ref{prop:LineB}, $(ii)$ and $(iii)$. 

Furthermore, from the proof of Theorem \ref{prop:LineB}, one also deduces the existence of two involutions on the set of $h$-Ulrich line bundles on $X$.

\medskip

\noindent
$\bullet$ The first involution is  the natural one, namely that given by sending any $h$-Ulrich line bundle $\mathcal U$ to its {\em Ulrich dual} $\mathcal U^U := \mathcal U^{\vee} \otimes \omega_X(4h)$; 

\medskip

\noindent
$\bullet$ the second involution is that induced by the aforementioned double structure of $X$ as threefold scroll, namely over $\FF_a$ as well as over $\FF_b$ respectively, for integers pairs $(a,b) \in (\mathbb Z_{\geq 0})^{\oplus 2}$ as in Proposition \ref{prop:joan} and Corollary \ref{cor:ffOct}, from which $\Pp_{\FF_a}(\mathcal E_a) \simeq  \Pp_{\FF_b}(\mathcal E_b)$ and $h= \xi + C_0 + F =h_a= h_b= D_0 + \eta + G $.

The proof of Theorem \ref{prop:LineB} shows that,   exchanging the role between $a$ and $b$, both involutions act the same  on $h$-Ulrich line bundles $N$ and $N^U$, sending $N$ to $N^U$ and viceversa. On the contrary on $h$-Ulrich line bundles $L, L^U, M, M^U$ the first (natural) involution obviously sends $L$ (resp. $M$) to its Ulrich dual $L^U$ (resp. $M^U$) and viceversa, whereas the second involution sends $L$ to $M^U$ (resp. $M$ to $L^U$) and viceversa. 
}
\end{rem}

\begin{rem}\label{rem:beauville} {\normalfont (1) Notice that, if $a=b=0$ and $c=1$, as observed in Remark \ref{rem:1astratto} $X\cong  \Pp^1 \times  \Pp^1 \times  \Pp^1$ turns out to be the {\em Segre threefold} (or even {\em del Pezzo threefold}, c.f. e.g. \cite{cfk2}) $X \subset \Pp^{7}$, of degree $d= 6$ and sectional genus $g = 1$ and, in such a particular case, $h$-Ulrich line bundles as in $(iii)$ are those listed in the last two rows of \cite[Lemma 2.4]{cas-fae-ma}.

\medskip

\noindent
(2) From Theorem \ref{prop:LineB}, among line bundles as in $(i),\,(ii),\,(iii)$, i.e. up to the action of the second involution on the set of $h$-Ulrich line bundles, we see that $L = \mathcal O_0(1,0,3c-b-1)$ and $L^U = \mathcal O_0(1,2,2c-1)$ are the only ones being of the form  $h_0 + \varphi^*( \mathcal O_{\FF_0} (\alpha, \beta))$, where $\mathcal O_{\FF_0} (\alpha, \beta) \otimes c_1(\mathcal E'_0)$ is $c_1(\mathcal E'_0)$-Ulrich on $\FF_0$ in the sense of Theorem \ref{pullback}. Indeed, since $h_0 = \xi + C_0 + cF = \mathcal O_0(1,1,c)$ we have 
$$L(-h_0) = \mathcal O_0(0,-1,2c-b-1) = \varphi^*( \mathcal O_{\FF_0} (-1, 2c-b-1))$$and 
$$L^U(-h_0) = \mathcal O_0(0,1,-1) = \varphi^*( \mathcal O_{\FF_0} (1, -1));$$moreover, since $c_1(\mathcal E'_0) = \mathcal O_{\FF_0} (2, 2c-b)$ is very-ample on $\FF_0$, we notice that 
$$\mathcal O_{\FF_0} (-1, 2c-b-1) \otimes c_1(\mathcal E'_0) = 
\mathcal O_{\FF_0} (1, 4c-2b-1) \; {\rm and} \; 
\mathcal O_{\FF_0} (1, -1) \otimes c_1(\mathcal E'_0) = 
\mathcal O_{\FF_0} (3, 2c-b-1)$$are the only $c_1(\mathcal E'_0)$-Ulrich line bundles on $\FF_0$ by \cite[Example\,2.1]{cas}. Therefore line bundles $L$ and $L^U$ in a certain sense derive their {\em $h_0$-Ulrichness} on $X$ from {\em $c_1(\mathcal E'_0)$-Ulrichness}   from the base $\FF_0$, namely  there exist  
$c_1(\mathcal E'_0)$-Ulrich line bundles $\mathcal U_{\FF_0}$ on $\FF_0$ s.t. $h_0 \otimes \varphi^*(\mathcal U_{\FF_0}(-c_1(\mathcal E'_0)))$ are $h_0$-Ulrich on $X$,  in the sense of Theorem \ref{pullback}.

\medskip

\noindent
(3)  From Proposition  \ref{prop:joan}, for any integers $a, b \geq 0$ we have  the isomorphism $\Pp_{\FF_a}(\E_a) \cong   \Pp_{\FF_{b}}(\E_b)$. Take now any integer $c$ and tensor $\E_a$ by $ \Oc_{\FF_a}(1, c)$ as well as $\mathcal E_{b}$ by $ \Oc_{\FF_{b}}(1, c)$. One still has the isomorphism: 
\begin{equation}\label{eq:isommoltoampi}
\Pp_{\FF_a}(\Oc_{\FF_a}(1,c) \oplus\Oc_{\FF_a}(1, c-b)) \cong   \Pp_{\FF_{b}}(\Oc_{\FF_{b}}(1,c)\oplus \Oc_{\FF_{b}}(1,c-a)). 
\end{equation} If  $c \geq a+b+1$ as in \eqref{eq:2.may3}, with notation as in Remark \ref{changepol}, the vector bundles \linebreak
$\E'_a= \! \mathcal E_a \otimes \Oc_{\FF_a}(1, c)=\Oc_{\FF_a}(1,c) \oplus\Oc_{\FF_a}(1, c-b)$ and $ \E'_b = \! \mathcal E_{b} \otimes \Oc_{\FF_{b}}(1, c)= \! \Oc_{\FF_{b}}(1,c)\oplus \Oc_{\FF_{b}}(1, c-a) $ are both very ample and, using notation as in Corollary \ref{cor:ffOct},  the corresponding (very-ample) tautological line bundles are $h_a:= \xi + C_0 + c F$ and $h_b := \eta+ D_0 + c G$, respectively, as from Corollary \ref{cor:ffOct}.

If in particular we take $$a=0  \;\; {\rm and}\;\; \E_0':= \Oc_{\FF_0}(1, d) \oplus\Oc_{\FF_0}(1, 1), \; \mbox{for some integer} \; d \geq 2,$$from \eqref{eq:isommoltoampi} we get therefore 
$$b = d-1 \geq 1,\; c = d \geq 2 \;\; {\rm and} \;\; X = \Pp_{\FF_0}(\E_0') \cong   \Pp_{\FF_{d-1}}(\Oc_{\FF_{d-1}}(1, d)^{ \oplus 2}) \cong \Pp^1\times\FF_{d-1}.$$Let $\pi:  \Pp^1\times\FF_{d-1} \to  \Pin{1}$ and $q:  \Pp^1\times\FF_{d-1} \to  \FF_{d-1}$  be the projections onto the first  and second factor, respectively.  The tautological polarization on  $\Pp(\Oc_{\FF_{d-1}}(1, d)^{ \oplus 2})$ coincides with the embedding line bundle 
$h_{d-1} = \eta+ D_0 + dG$ as above, which therefore reads also 
$$h_{d-1}=q^*(\Oc_{\FF_{d-1}}(1,d)) \otimes \pi^*(\Oc_{\Pin{1}}(1)).$$Recall now that the only $\Oc_{\Pin{1}}(1)$-Ulrich line bundle on $\Pp^1$ is 
$\Oc_{\Pin{1}}$ whereas $\Oc_{\FF_{d-1}}(1,d)$-Ulrich line bundles on $\FF_{d-1}$
 with $d \geq 2$, are classified by e.g. \cite[Theorem 2.1]{a-c-mr} and by \cite[Example\,2.1]{cas}. 
Hence, from \cite[(3.5)]{b}, it follows that on $(\Pp^1\times\FF_{d-1}, h_{d-1}) $, the line bundles 
 \begin{eqnarray*}
{\mathcal L}_1&:=&q^*(\Oc_{\FF_{d-1}}(0,d)) \otimes \pi^*(\Oc_{\Pin{1}}(2)), \qquad {\mathcal L}_2: =q^*(\Oc_{\FF_{d-1}}(1,d-1)) \otimes \pi^*(\Oc_{\Pin{1}}(2)), \\
{\mathcal L}_3&: =& q^*(\Oc_{\FF_{d-1}}(1,2d)) \otimes \pi^*(\Oc_{\Pin{1}}),\;\; \qquad {\mathcal L}_4: = q^*(\Oc_{\FF_{d-1}}(2,2d-1)) \otimes \pi^*(\Oc_{\Pin{1}})
\end{eqnarray*}are $h_{d-1}$-Ulrich.
 Since $h_{d-1}=q^*(\Oc_{\FF_{d-1}}(1,d))\otimes \pi^*(\Oc_{\Pin{1}}(1))$, it then  follows that 
$$\pi^*(\Oc_{\Pin{1}}(1))=  h_{d-1} - q^*(\Oc_{\FF_{d-1}}(1,d)) 
= \eta + D_0 + dG - D_0 - dG = \eta,$$ so the previous expressions read: 
\begin{eqnarray*}
{\mathcal L}_1&:=&2 \eta + dG   = \mathcal O_{d-1}(2,0,d), \qquad {\mathcal L}_2: =2 \eta + D_0 + (d-1)G   = \mathcal O_{d-1}(2,1,d-1), \\
{\mathcal L}_3&: =& D_0 + 2dG = \mathcal O_{d-1}(0,1,2d),\;\; \qquad {\mathcal L}_4: = 2D_0 + (2d-1)G = \mathcal O_{d-1}(0,2,2d-1).
\end{eqnarray*} Since, from Corollary \ref{cor:ffOct} by the change of basis matrix $\widehat{M}$ we have
$$\eta = C_0, \; \xi = D_0,\; G = F, \;\; h_0 = h_{d-1},$$if we write the line bundles $\mathcal L_i$, $1 \leq i \leq 4$, as $h_0$-Ulrich line bundles on $X = \Pp_{\FF_0}(\Oc_{\FF_0}(1, d) \oplus\Oc_{\FF_0}(1, 1))$, with $h_0 = \xi + C_0 + dF$, we get:
\begin{eqnarray*}
{\mathcal L}_1&=&2C_0 + d F = \Oc_0(0,2,d), \hspace{20mm} {\mathcal L}_2= \xi + 2 C_0 + (d-1)F = \Oc_0(1,2,d-1), \\
{\mathcal L}_3&=&\xi+ 2 dF = \Oc_0(1,0,2d),  \hspace{20mm} {\mathcal L}_4=2\xi + (2d-1)F= \Oc_0(2,0, 2d-1).
\end{eqnarray*}Since $a=0$, $b=d-1$ and $c = d \geq 2$, from Theorem \ref{prop:LineB} we see that 
\begin{eqnarray*}
{\mathcal L}_1&=&N^U, \hspace{17mm} {\mathcal L}_2=L^U, \\
{\mathcal L}_3&=&L,  \hspace{20mm} {\mathcal L}_4=N,
\end{eqnarray*} in other words, in this particular situation, $h_0$-Ulrich line bundles 
$L,L^U, N$ and $N^U$ on $X = \Pp_{\FF_0}(\Oc_{\FF_0}(1, d) \oplus\Oc_{\FF_0}(1, 1))$ as in Theorem \ref{prop:LineB} are those determined via {\em twisted pull-back}  as in \cite[(3.5)]{b} of Ulrich line bundles from the two bases of the trivial threefold-scroll structure of $X \simeq \Pp^1\times\FF_{d-1}$ over $\FF_{d-1}$; moreover 
these are the only $h_0$-Ulrich line bundles on $X$, as it follows from Theorem \ref{prop:LineB}-$(iii)$ because $b = d-1 > 0 = a$.    
}
\end{rem}

%%%%%%%%%%%%%%%%%%%%%%%%%%%
%
%  	RANK 2 Ulrich on X_a
%
%%%%%%%%%%%%%%%%%%%%%
\section{Moduli spaces of rank-$2$ Ulrich vector bundles on threefold scrolls $X$ } \label{Ulrich rk 2 vb} 
 In this section, we investigate on the possibility for $X$ as in \eqref{eq:Xe} to be the support of rank two vector bundles which are $h$-Ulrich, $h=\xi+C_0+cF$ with $c \geq a+b+1$ as in \eqref{eq:2.may2} and \eqref{eq:2.may3}. If so, we will also describe some relevant irreducible components of their moduli spaces.

\subsection{Extension spaces}\label{extensionsrk2} Using $h$-Ulrich line bundles  determined in Theorem  \ref{prop:LineB}, we construct  rank two $h$-Ulrich vector bundles arising as non-trivial extensions of them. In the light of the simmetry proved therein, which is induced by the {\em double structure} of $X$ as threefold scroll over both $\FF_a$ and $\FF_b$, we may and henceforth we will suppose from now on that the integers $a$ and $b$ are such that:
\begin{equation}\label{eq:restriction}
0 \leq a \leq b  \;\; {\rm and} \;\; c \geq a+b+1,
\end{equation} 
and, to easy notation,  whenever it is clear from the context that we are dealing with line bundles on $X$ considered as threefold scroll over $\FF_a$, we will simply write $\mathcal O(\alpha_1, \alpha_2, \alpha_3)$, instead of  $\mathcal O_a(\alpha_1, \alpha_2, \alpha_3)$.

\bigskip
%%%%%%%%%%%%%%%%%%%%%%%%%%%
%
%  	Case 1
%
%%%%%%%%%%%%%%%%%%%%%

\begin{itemize}
\item[{\bf Case 1} $(i)$-$(ii)_1$-$(iii)_1$]  Under condition \eqref{eq:restriction}, from Theorem  \ref{prop:LineB} we recall that   the $h$-Ulrich line bundles: $$N= \Oc(2,0,2c-a-1) \;\mbox{and its Ulrich dual} \; N^U= \Oc (0,2,2c-b-1)$$
occur in  case $(i)$ if  $b \geq a >0$ (using \eqref{eq:restriction})  and $c \geq a+b+1$;
in case $(ii)$  if  $b > a = 0$ and $c \geq b+1$ (we denote it, case $(ii)_1$);
in case $(iii)$  if  $ a = b = 0$ and $c \geq 1$ (we denote it, case $(iii)_1$). Consider 
the vector space ${\rm Ext}^1(N, N^U)$ and set $${\rm ext}^1(N, N^U):= \dim({\rm Ext}^1(N, N^U));$$ 
by projection formula and Serre duality on $\FF_a$, we have

{\small 
\begin{eqnarray}\label{extN-NU}{\rm ext}^1(N, N^U)&=&h^1(X, N^U- N)=h^1(X,\mathcal O (-2,2,a-b))
=h^0(\FF_a, \Oc_{\FF_a}(2,a)) \\
&= &  \left\{\aligned 
a+2& \qquad \mbox{if $a>0$}\\ \nonumber
3& \qquad \mbox{if $a=0$.}\\ 
\endaligned\right. 
\end{eqnarray}}

This extension space gives rise to non-trivial rank-two extensions:
\begin{eqnarray}\label{extension2}
0 \to N^U  \to \cF_N \to N \to 0.
\end{eqnarray} As, from  the proof of  Theorem \ref{prop:LineB} and from Remark \ref{rem:3.2}, both the involutions   on the set of $h$-Ulrich bundles on $X$ interchanges such line bundles, hence with similar computations one finds 
\begin{eqnarray}\label{extNU-N}
{\rm ext}^1(N^U, N)=   \left\{\aligned 
b+2& \qquad \mbox{if $b>0$}\\ 
3& \qquad \mbox{if $b=0$,}\\ 
\endaligned\right. 
\end{eqnarray}  
which gives rise to non-trivial extensions
\begin{eqnarray}\label{extension2'}
0 \to N  \to \cF_N' \to N^U \to 0.
\end{eqnarray} 
Both $\cF_N$ and $\cF_N'$ are $h$-Ulrich vector bundles, since they are extensions of $h$-Ulrich line bundles,  and their Chern classes are 
\begin{eqnarray}\label{eq:chernFN}
c_1(\cF_N)=c_1(\cF_N')& = & \mathcal O (2,2,4c-b-a-2), \\ \nonumber
c_2(\cF_N)=c_2(\cF_N') & = & 4 \xi.C_0 + 2 (2c-b-1) \xi.F + 2(2c-a-1) C_0.F,
\end{eqnarray}

for this we have used \eqref{eq:chow} and the  notation as in  \eqref{eq:secondchern}, \eqref{eq:may2.triples}.
\bigskip 

\noindent
Because in $(ii)_1$  $b>a=0$,  in this case, by \eqref{extN-NU}, we have ${\rm ext}^1(N, N^U) = 3$, whereas, by  \eqref{extNU-N}, ${\rm ext}^1(N^U, N)= b +2 \geq 3$, the latter inequality holds because $b>0$. As in \eqref{extension2} and \eqref{extension2'},
 we get rank-two $h$-Ulrich vector bundles with Chern classes as in \eqref{eq:chernFN} with $a=0$, i.e. 
\begin{eqnarray}\label{eq:chernFN0}
c_1(\cF_N)=c_1(\cF_N')& = & \mathcal O (2,2,4c-b-2), \\ \nonumber
c_2(\cF_N)=c_2(\cF_N') & = & 4 \xi.C_0 + 2(2c-b-1) \xi.F + 2(2c-1) C_0.F.
\end{eqnarray}

\noindent
Similarly, in $(iii)_1$, because $a = b = 0$, \eqref{extN-NU} and  \eqref{extNU-N} give in this case 
 ${\rm ext}^1(N, N^U) = {\rm ext}^1(N^U, N)=3$ and once again both extension spaces give rise to rank-two $h$-Ulrich vector bundles with Chern classes as in \eqref{eq:chernFN} with $a = b =0$, that is  
\begin{eqnarray}\label{eq:chernFN00}
c_1(\cF_N)=c_1(\cF_N')& = & \mathcal O (2,2,2(2c-1)), \\ \nonumber
c_2(\cF_N)=c_2(\cF_N') & = & 4 \xi.C_0 + 2(2c-1) \xi.F + 2(2c-1) C_0.F;
\end{eqnarray}

Any extension $\mathcal F_N$ and $\mathcal F'_N$ (even the trivial one $N \oplus N^U$) as in \eqref{extension2} and \eqref{extension2'} gives rise to a {\em special} $h$-Ulrich bundle on $X$ (in the sense of Definition \ref{def:special}) as 
$$c_1(\mathcal F_N) = c_1(\mathcal F'_N) = 4h+K_X.$$Moreover, from e.g. \eqref{eq:chernFN}  and from the fact that $h = \mathcal O(1,1,c)$, one gets 
$$c_1(\mathcal F_N(-h)) = \mathcal O_a(0,0,2c-a-b-2)\; {\rm and} \; 
c_2(\mathcal F_N(-h)) = 2 \xi.C_0 + a \xi.F + b C_0.F.$$
We will consider only $\mathcal F_N$ since, exchanging the role between $a$ and $b$, $\mathcal F_N$ and $\mathcal F'_N$ are equivalent. 

 From Remark \ref{cl:pullbackrk2} we notice that for any $b \geq a \geq 0$, $h$-Ulrichness of $\mathcal F_N$ on $X$ does not arise neither from $c_1(\mathcal E'_a)$-Ulrichness on the base $\FF_a$ nor from $c_1(\mathcal E'_b)$-Ulrichness on the base $\FF_b$ in the sense of Theorem \ref{pullback}, where $\mathcal E'_a$ and $\mathcal E'_b$ are as in Remark \ref{changepol}. 

Indeed, considering the threefold scroll structure of $X$ over the base $\FF_b$ and the change of basis matrix $\widehat{M}$ as in Corollary \ref{cor:ffOct}, previous computations show that 
$$c_2(\mathcal F_N(-h)) = 2 \xi.C_0 + a \xi.F + b C_0.F =  
2 \eta.D_0 +b \eta.G + a D_0.G.$$Because of the presence of the summand $2 \xi.C_0$ (equivalently $2 \eta.D_0$) in the expression of $ c_2(\mathcal F_N(-h)) $, from Remark \ref{cl:pullbackrk2} it follows that $\mathcal F_N(-h)$ (and also $\mathcal F'_N(-h)$)  cannot be a pull-back from neither $\FF_a$ nor $\FF_b$ (in the sense of Theorem \ref{pullback}) for any $b \geq a \geq 0$. 

Nevertheless, when e.g. $a=b=0$, i.e. in {\bf Case 1}-$(iii)_1$, we have that \linebreak $X \cong \Pp^1\times\FF_0 \cong \mathbb P^1 \times \mathbb P^1 \times \mathbb P^1$ is the {\em Segre-Veronese threefold} as in Remark \ref{rem:1astratto};  in such a case we can show that $\mathcal F_N(-h)$ can be considered, up to a twist via a pull-back of a line bundle supported on the third $\mathbb P^1$-factor a {\em modified pull-back}  (i.e. not as properly as in Remark \ref{cl:pullbackrk2}) from the base $\FF_0 \cong  \mathbb P^1 \times \mathbb P^1$. Indeed, if  as in Remark \ref{rem:1astratto} we denote by $\pi_i$ the projection onto the $i$-th factor $\Pp^1$ and by $q_i$ the projection onto the factor $\FF_0 \cong \mathbb P^1 \times \mathbb P^1$ obtained by eliminating the $i$-th factor $\Pp^1$ of $X$, with $i=1,2,3$, 
we have in this case three possible projections and not only two.  Setting $h_i:= \pi^*_i(\mathcal O_{\mathbb P^1}(1))$, $ 1 \leq i \leq 3$, we have $h = h_1 + h_2 + c \, h_3$ so that $h_3 = F$ on $X$; hence, since $c_2(\cF_N(-h))$ does not involve the symbol $F=h_3$, it is clear that  $\cF_N(-h)$ cannot be a pull-back from the base $\FF_0$ via the  maps $q_1, q_2$ (in the sense of Theorem \ref{pullback}, by Remark \ref{cl:pullbackrk2}). However, from \eqref{extension2}, from the definition of $N$ and $N^U$ and from the fact that $a=b=0$, we see that $\cF_N(-h)$ fits in:

$$0 \to \mathcal O_0(-1,1,(c-1)) \to \cF_N(-h) \to \mathcal O_0(1,-1,(c-1)) \to 0;$$since $F = h_3 = \pi_3^*(\mathcal O_{\mathbb P^1}(1))$, one has therefore that 
$$\cF_N(-h)\cong q_3^*(\cG) \otimes \mathcal O_X((c-1)h_3) \cong q_3^*(\cG) \otimes  \pi_3^*(\Oc_{\Pin{1}}(c-1)),$$ where the bundle $\cG$ arises from a non-trivial extension on $\FF_0$
 \begin{eqnarray}
\label{extension2Q}
0 \to \Oc_{\FF_0}(1,-1)  \to \cG \to \Oc_{\FF_0}(-1,1) \to 0. 
\end{eqnarray}
(this in particular is a motivation  of the fact that e.g. in this case the dimension of the moduli space will not depend on $c$ -- cfr. Theorem \ref{thm:rk2case1} below). From \eqref{extension2Q} we have therefore $$c_1(\cG)=\Oc_{\FF_0}(0,0) \;\; {\rm and} \;\; c_2(\cG)=2.$$Such bundles $\cG$ have been studied in  \cite[Theorem\;A]{CM}, where it has been  proved that their moduli space $\mathcal M(2; c_1, c_2)$ is a smooth, irreducible, rational, quasi-projective variety of dimension $4c_2-c_1^2-3=8-3=5$. 

These bundles can be also obtained (see \cite[Remark\,8.3]{cas-fae-ma2}) as the restriction to the quadric $Q \simeq \FF_0$ of a null correlation bundles $\mathcal N$ over $\Pp^3$. Let $\mathcal M_{\Oc_{\Pp^3}(1)}^{ss,0}(2;0,1)$ be the open subset of $\mathcal M_{\cO_{\Pp3}(1)}^{ss}(2;0,1)$ consisting of all bundles $\mathcal N$ such that $\mathcal N\otimes\Oc_{Q}$ is stable on $Q \simeq \FF_0$. The restriction gives an \'etale quasi--finite morphism from $\mathcal M_{\Oc_{\Pp^3}(1)}^{ss,0}(2;0,1)$ onto an
open proper dense subset $\mathfrak U \subset\mathcal M(2;0,2)$. The general bundle $\cG$ of $\mathfrak U$ has a twin pair (a {\em Tjurin pair}) of null correlation bundles restricting to it, while there are bundles $\cG$ in $\mathfrak U$ with a unique null correlation bundle restricting to it. 

If in particular  $c= 1$, namely when $X$ is the {\em Segre} (or even {\em del Pezzo}, c.f. e.g. \cite{cfk2}) threefold in $\mathbb P^7$  then $\E'_0 = \Oc_{\FF_0}(1, 1)^{\oplus 2}$ and, as previously observed, one has  
 {\small
\begin{eqnarray*} 
\qquad \qquad c_1(\cF_N)=  2 h+\varphi^*(\Oc_{\FF_0}(0,0))=2h_1+2h_2+2h_3 \;\; \; {\rm and} \; \;\; c_2(\cF_{N })=4h_1h_2+2h_1h_3+2h_2h_3, 
\end{eqnarray*}}(cfr. \cite[Theorem B\,and\,Lemma 6.2]{cas-fae-ma}).

%%%%%%%%%%%%%%%%%%%%%%%%%%%
%
%  	Case 2
%
%%%%%%%%%%%%%%%%%%%%%

\medskip

\item[{\bf Case 2} $(ii)_2$-$(iii)_2$]

\smallskip

\noindent
The other pair of $h$-Ulrich line bundles on $X$ given by Theorem  \ref{prop:LineB} is 
$$L= \Oc(1,0,3c-b-1) \;\mbox{and its Ulrich dual} \; L^U= \Oc (1,2,c-1).$$ They occur in case $(ii)$ (we denote it $(ii)_2$ to distinguish it from $(ii)_1$) and in case $(iii)$ if $a = b = 0$ (we denote such case by $(iii)_2$). A computation similar to the previous {\bf Case 1}, gives 
{\small $${\rm ext}^1(L, L^U) = 3(2c-b-1)  \;\; {\rm and} \;\;  {\rm ext}^1(L^U, L) = 
2c-b+1.$$}
In case $(ii)_2$, being $c \geq b+1$ and $b \geq 1$, we have 
{\small $${\rm ext}^1(L, L^U) = 3(2c-b-1) \geq 3(b+1) \geq 6 \;\; {\rm and} \;\;  {\rm ext}^1(L^U, L) = 
2c-b+1 \geq b+3 \geq 4.$$}

We get therefore non-trivial rank-two $h$-Ulrich vector bundles 
\begin{eqnarray}\label{extension1}
0 \to L^U  \to \cF_L \to L \to 0  \;\;\;\;  {\rm and} \;\;\; \; 
0 \to L  \to \cF_L' \to L^U \to 0,
\end{eqnarray}and, with notation \eqref{eq:may2.triples}, we get  $c_1(\cF_L)= c_1(\cF_L') =\mathcal O(2,2,4c-b-2)$ and $c_2(\cF_L) = c_2(\cF_L') =
(\xi + (3c-b-1)F).(\xi+2C_0+(c-1)F)$. Using \eqref{eq:chow} and  \eqref{eq:secondchern}, this means
\begin{eqnarray}\label{eq:chernFL0}
c_1(\cF_L)=c_1(\cF_L')& = & \mathcal O (2,2,4c-b-2), \\ \nonumber
c_2(\cF_L)=c_2(\cF_L') & = & 2 \xi.C_0 + 2(2c-b-1) \xi.F + 2(3c-b-1) C_0.F.
\end{eqnarray}
Notice that families of extensions as in \eqref{extension1} are 
of dimensions $3(2c-b-1)$ and $2c-b+1$ respectively, which are different unless, 
$c=1$ and $b=0$, not allowed in this case.\\

In case $(iii)_2$, being $a = b = 0$ and $c \geq 1$  
we have 
{\small $${\rm ext}^1(L, L^U) = 3(2c-1) \geq 3  \;\; {\rm and} \;\;  {\rm ext}^1(L^U, L) = 
2c+1 \geq 3$$}  which determine rank-two $h$-Ulrich vector bundles s.t.  
\begin{eqnarray}\label{eq:chernFL00}
c_1(\cF_L)=c_1(\cF_L')& = & \mathcal O (2,2,4c-2), \\ \nonumber
c_2(\cF_L)=c_2(\cF_L') & = & 2 \xi.C_0 + 2(2c-1) \xi.F + 2(3c-1) C_0.F.
\end{eqnarray} From computations as above, the two families of extensions have the same dimension if $3(2c-1) = 2c+1$, that is for $c=1$, in which case the dimension is  equal to $3$. 
\medskip

In any of the above cases  we have 
$$c_1(\cF_L(-h))=\mathcal O(0,0,2c-b-2) \;\; {\rm and} \;\; c_2(\cF_L(-h))= (2c-b)C_0.F$$ so, according to Remark  \ref{cl:pullbackrk2}, there are no obstructions  for any $b \geq 0 = a$ and $c \geq 1$  for the bundle $\cF_L(-h)$ to be a pull-back from the base $\FF_0$; the same holds for $\cF'_L(-h)$. 

This is actually the case; indeed, we more precisely have 
$$L(-h) = \varphi^*(\Oc_{\FF_0}(-1,2c-b-1))\;\; {\rm and} \;\; L^{U}(-h) =\varphi^*(\Oc_{\FF_0}(1,-1)),$$ then $\cF_L(-h)\cong\varphi^*(\cG)$, where  $\cG$ is a rank-two vector bundle of $\FF_0$ arising as the following non-trivial extension:
 \begin{eqnarray}\label{extension1Q}
0 \to \Oc_{\FF_0}(1,-1)  \to \cG \to \Oc_{\FF_0}(-1,2c-b-1) \to 0, 
\end{eqnarray}for which $$c_1(\cG)=\Oc_{\FF_0}(0,2c-b-2) \;\; {\rm and} \;\; c_2(\cG)=2c-b.$$In \cite[Theorem\,A]{CM} such bundles have been studied and it has been proved that their moduli space $\mathcal M(2; c_1, c_2)$ is a smooth, irreducible, rational, quasi-projective variety of dimension $4c_2-c_1^2-3= 8c-4b -3$.

 To sum-up, in any of the above cases, the bundle $\mathcal F_L$ (and also $\mathcal F'_L$) is of the form $h \otimes \varphi^*(\mathcal G)$, where $\mathcal G \otimes c_1(\mathcal E'_0)$ 
is $c_1(\mathcal E'_0)$-Ulrich on $\mathbb F_0$, in the sense of Theorem \ref{pullback}. When in particular $a=b=0$ and $c=1$, we are dealing with the {\em Segre} (or even {\em del Pezzo}, c.f. e.g. \cite{cfk2}) threefold, i.e. the embedding of $\Pp^1 \times \Pp^1 \times \Pp^1 $ via $\mathcal O(1,1,1)$; if, as in Remark \ref{rem:1astratto}, we let $\pi_i$ to be the projection of $\Pp^1 \times \Pp^1 \times \Pp^1 $ onto the $i$-th factor and correspondingly $h_i := \pi_i^*(\mathcal O_{\Pp^1}(1))$, $1 \leq i \leq 3$, then $h= h_1+h_2+h_3$ and by \eqref{extension1} we have 
{\small
\begin{eqnarray*} 
\qquad \qquad c_1(\mathcal F_L) = c_1(\mathcal F_L') = 2h_1 + 2 h_2+2h_3 \;\; {\rm and} \;\; 
c_2(\mathcal F_L) = c_2(\mathcal F'_L) = 2h_1h_2 + 2 h_1h_3+4h_2h_3
\end{eqnarray*}} as in \,\cite[Theorem\,B and Lemma\,6.2]{cas-fae-ma}.  

\bigskip

Because of the presence of four possible $h$-Ulrich line bundles in Theorem \ref{prop:LineB}-$(ii)$ and six in in Theorem \ref{prop:LineB}-$(iii)$, we have to take into account also {\em mixed type} extensions, dealing with the following further sub-cases.

\bigskip

%%%%%%%%%%%%%%%%%%%%%%%%%%%
%
%  	Case 3
%
%%%%%%%%%%%%%%%%%%%%%

\item[{\bf Case 3 $(ii)_3$-$(iii)_3$}]
\noindent
We consider the pair of line bundles
$$L= \Oc(1,0,3c-b-1) \;\;\mbox{and} \;\; N= \Oc(2,0,2c-1).$$
If $b > 0$ and $c \geq b+1$ we denote such case by  $(ii)_3$ and if $a = b = 0$ and $c \geq 1$  we denote it by  $(iii)_3$. Taking such line bundles, one easily computes 

\begin{eqnarray*}
{\rm ext}^1(N, L) = 0 \;\; {\rm and} \;\;  {\rm ext}^1(L, N) = \left\{\aligned 
2c-b-2& \qquad \mbox{if $c>b+1$}\\ 
c-1& \qquad \mbox{if $c=b+1.$}\\ 
\endaligned\right. 
\end{eqnarray*}

So  we either have the trivial splitting bundle $\mathcal F_{L,N}= L\oplus N$ or  

\begin{eqnarray*}
 {\rm ext}^1(L, N) = \left\{\aligned 
2c-b-2& \qquad \mbox{if $c>b+1$}\\ 
c-1& \qquad \mbox{if $c=b+1.$}\\ 
\endaligned\right. 
\end{eqnarray*}
In case $(ii)_3$, because $c \geq b+1$ and $b>0$, ${\rm ext}^1(L, N)  \geq 1$,  it thus follows that  there are non-trivial extensions

\begin{eqnarray}\label{extensionNL0}
0 \to N  \to \cF_{N,L} \to L \to 0
\end{eqnarray}for which, 
%giving rise to rank-two h-Ulrich vector bundles with 
%{\small $$c_1(\cF_{N,L})=\Oc(3,0,5c-b-2),\;\; \mbox{and} \;\; %c_2(\cF_{N,L})= (2\xi + (2c-1)F).(\xi+(3c-b-1)F).$$}
using \eqref{eq:chow} and  \eqref{eq:secondchern}, one finds 
\begin{equation}\label{eq:chernFNL0}
c_1(\cF_{N,L}) = \mathcal O (3,0,5c-b-2) \; \; {\rm and} \; \; c_2(\cF_{N,L}) = (8c-4b-3)\xi.F.
\end{equation}

In case $(iii)_3$, being $a=b=0$,  and $c \geq 1$, ${\rm ext}^1(L, N) = 0$ when  $c=1$  and thus only the splitting bundle $L \oplus N$ can occur, while if $ c \geq 2$,  ${\rm ext}^1(L, N) =2c-b-2= 2c-2$ and we get 
vector bundles arising from non trivial extensions whose Chern classes are
\begin{equation}\label{eq:chernFNL00}
c_1(\cF_{N,L}) = \mathcal O (3,0,5c-2) \; \; {\rm and} \; \; c_2(\cF_{N,L}) = (8c-3)\xi.F.
\end{equation} 

  In any of the above cases (recalling that $\mathcal F_{L,N}$ always is the trivial bundle)  we have 
{\small
\begin{eqnarray*} 
\qquad \quad c_1(\mathcal F_{N,L}(-h)) = c_1(\mathcal F_{L,N}(-h)) = \mathcal O(1,-2,3c-b-2) \;\; {\rm and} \\ \nonumber
c_2(\mathcal F_{N,L}(-h)) = c_2(\mathcal F_{L,N}(-h))= -\xi.C_0 + (2c-b-1)\xi.F-(3c-b-2)C_0.F. 
\end{eqnarray*}
}

Thus,  for any $b >a=0$ and any $c \geq b+1$, all bundles   $\mathcal F_{N,L}(-h)$ (and also $\mathcal F_{L,N}(-h)$) cannot be a pull-back from neither $\FF_0$ nor $\FF_b$, because of the presence of the summand $- \xi.C_0$ in $ c_2(\mathcal F_{N,L}(-h))$   (cfr. Remark \ref{cl:pullbackrk2}). For the same reason, 
when $a=b=0$ and for any $c \geq 1$, any of the above bundles cannot be a pull-back from $\FF_0$; in particular  when $c=1$, i.e. $X\cong \Pp^1 \times \Pp^1 \times \Pp^1$ embedded via $h=\mathcal O(1,1,1)$ is the {\em Segre} (or even {\em del Pezzo}, c.f. e.g. \cite{cfk2}) threefold, from the above computations and with the same notation as at the end of {\bf Case 2}, we know that all the bundles simply reduce to the  trivial bundle $L \oplus N  = (h_1+2h_3)\oplus (2h_1+h_3)$ which is not a pull-back from the base $\FF_0$ (cfr. also \cite[Lemma 2.4]{cas-fae-ma}).

%%%%%%%%%%%%%%%%%%%%%%%%%%%
%
%  	Case 4
%
%%%%%%%%%%%%%%%%%%%%%

\smallskip
\item[{\bf Case 4 $(ii)_4$-$(iii)_4$}] 
\noindent
With  $(ii)_4$ and $(iii)_4$ we denote the cases in which the pair of line bundles is
$$L= \Oc(1,0,3c-b-1) \;\;\mbox{and} \;\; N^U= \Oc(0,2,2c-b-1),$$
with  $b > a= 0$ and $c \geq b+1$ in the case  $(ii)_4$ and with $a = b = 0$ and $c \geq 1$ in the case  $(iii)_4$. 

One can easily see that 
$${\rm ext}^1(L, N^U) = 0 \;\; {\rm and} \;\;  {\rm ext}^1(N^U, L) = 2c-b+2$$ 
i.e. in the first case we only have $N^U \oplus L$ whereas 
in the second case, 
if we are in case $(ii)_4$
$${\rm ext}^1(N^U, L) = 2c-b+2\geq b+4 \geq 5,$$ because $c \geq b+1$ and $b>0$.
 Therefore, there are non-trivial extensions
\begin{eqnarray}\label{extensionLNu0}
0 \to L \to \cF_{L,N^U} \to N^U\to 0
\end{eqnarray}giving rise to rank-two h-Ulrich vector bundles 
with 
%{\footnotesize $$ \;\;\;\;\;\;\; \;\;%\;c_1(\cF_{L,N^U})=\Oc(1,2,5c-2b-2),\;\; \mbox{and} \;\; c_2(\cF_{L,N^U})= (\xi + (3c-b-1-1)F).(2C_0+(2c-b-1)F).$$}Using \eqref{eq:chow} and  \eqref{eq:secondchern}, one has 
{\footnotesize
\begin{equation}\label{eq:chernFLNu0}
c_1(\cF_{L,N^U}) = \Oc(1,2,5c-2b-2) \; \; {\rm and} \; \; c_2(\cF_{L,N^U}) = 2 \xi.C_0 + (2c-b-1)\xi.F + 2(3c-b-1)C_0.F.
\end{equation}}

\noindent
If we are in case $(iii)_4$, ${\rm ext}^1(N^U,L)= 2c+2 \geq 4$, since  $c \geq 1$,  so we get $h$-Ulrich bundles arising from non-trivial extensions s.t.
{\small
\begin{equation}\label{eq:chernFLNu00}
c_1(\cF_{L,N^U}) = \Oc(1,2,5c-2) \; \; {\rm and} \; \; c_2(\cF_{L,N^U}) = 2 \xi.C_0 + (2c-1)\xi.F + 2(3c-1)C_0.F.
\end{equation}
}
We also have 
{\small
\begin{eqnarray*} 
\qquad \quad c_1(\cF_{L,N^U}(-h)) = c_1(\cF_{N^U,L}(-h)) = \mathcal O(-1,0,4c-2b-3) \;\; {\rm and} \\ \nonumber
c_2(\cF_{L,N^U}(-h)) = c_2(\cF_{N^U,L}(-h))= \xi.C_0 - (2c-b-1)\xi.F+C_0.F. 
\end{eqnarray*}
}
Thus,  with same arguments as in the previous case, $\cF_{L,N^U}(-h)$ (and also $\cF_{N^U,L}(-h)$) cannot be a pull-back from neither $\FF_0$ nor $\FF_b$, when $b >0=a$,  or simply from $\FF_0$, when $a=b=0$, for any $c \geq b+1$.

When in particular, $a=b=0$ and $c=1$, i.e. when $X\cong \Pp^1 \times \Pp^1 \times \Pp^1$ embedded via $h=\mathcal O(1,1,1)$ is th e{\em Segre} (or even {\em del Pezzo}, c.f. e.g. \cite{cfk2}) threefold, one has $c_1(\cF_{L,N^U})=  h_1+2h_2+3h_3$ and   $c_2(\cF_{L,N^U})=2h_1h_2+h_1h_3+4h_2h_3$, (cfr. \cite[Theorem B and \S\,7]{cas-fae-ma}).

%%%%%%%%%%%%%%%%%%%%%%%%%%%
%
%  	Case 5
%
%%%%%%%%%%%%%%%%%%%%%

\item[{\bf Case 5 $(ii)_5$-$(iii)_5$}]
With  $(ii)_5$ (respectively $(iii)_5$) we denote the cases in which the pair of line bundles is
 $$L^U= \Oc(1,2,c-1) \;\mbox{and} \; N= \Oc(2,0,2c-1),$$  
with  $b>0$ and $c \geq b+1$ (respectively,  with $a = b = 0$ and $c \geq 1$). One can easily see that 
$${\rm ext}^1(N, L^U) = 0 \;\;\; {\rm and} \;\;\;  {\rm ext}^1(L^U, N) = 2c-b+2.$$ Thus we have either the trivial bundle $L^U \oplus N$, or we get non trivial extensions in both the cases $(ii)_5$ and $(iii)_5$. In fact  in case $(ii)_5$, ${\rm ext}^1(L^U, N) = 2c-b+2\geq b+4 \geq 5,$ and therefore, there are non-trivial extensions
\begin{eqnarray}\label{extensionNLu0}
0 \to N \to \cF_{N,L^U} \to L^U\to 0
\end{eqnarray}giving rise to rank-two h-Ulrich vector bundles 
with 
%{\small $$c_1(\cF_{N,L^U})=\Oc(3,2,3c-2),\;\; \mbox{and} \;\; c_2(\cF_{N,L^U})= (\xi + 2C_0 + (c-1)F).(2\xi+(2c-1)F),$$}which, from \eqref{eq:chow} and  \eqref{eq:secondchern}, is
{\small
\begin{equation}\label{eq:chernFNLu0}
c_1(\cF_{N,L^U}) = \Oc(3,2,3c-2) \; \; {\rm and} \; \; c_2(\cF_{N,L^U}) = 4 \xi.C_0 + (4c-2b-3)\xi.F + 2(2c-1)C_0.F.
\end{equation}}
\noindent
In case $(iii)_5$, ${\rm ext}^1(L^U,N)= 2c+2 \geq 4$, so we get $h$-Ulrich bundles arising from non-trivial extensions in the latter case for which 
{\small
\begin{equation}\label{eq:chernFNLu00}
c_1(\cF_{N,L^U}) = \Oc(3,2,3c-2) \; \; {\rm and} \; \; c_2(\cF_{N,L^U}) = 4 \xi.C_0 + (4c-3)\xi.F + 2(2c-1)C_0.F.
\end{equation}}

  We have 
{\small
\begin{eqnarray*} 
\qquad \quad c_1(\cF_{L^U,N}(-h)) = c_1(\cF_{N,L^U}(-h)) = \mathcal O(1,0,c-2) \;\; {\rm and} \\ \nonumber
c_2(\cF_{L^U,N}(-h)) = c_2(\cF_{N,L^U}(-h))= \xi.C_0 - \xi.F+c\, C_0.F. 
\end{eqnarray*}
}

Thus, with same arguments as in the above cases,  $\cF_{L,N^U}(-h)$ (and also $\cF_{N^U,L}(-h)$) cannot be a pull-back from neither $\FF_0$ nor $\FF_b$, when $b >0=a$,  or simply from $\FF_0$, when $a=b=0$, for any $c \geq b+1$. In particular, when $a=b=0$ and $c=1$,  i.e.  $X\cong \Pp^1 \times \Pp^1 \times \Pp^1$ embedded via $h=\mathcal O(1,1,1)$ as the {\em Segre} (or even {\em del Pezzo}, c.f. e.g. \cite{cfk2}) threefold of degree $6$ in $\mathbb P^7$,  one has  $c_1(\cF_{N,L^U})=  3h_1+2h_2+h_3$ and   $c_2(\cF_{N,L^U})= 4h_1h_2+ h_1h_3+2h_2h_3$  (cfr. \cite[Lemma\,2.4]{cas-fae-ma}).

%%%%%%%%%%%%%%%%%%%%%%%%%%%
%
%  	Case 6
%
%%%%%%%%%%%%%%%%%%%%%

\item[{\bf Case 6 $(ii)_6$-$(iii)_6$}] If we consider the pair of line bundles 
 $$L^U= \Oc(1,2,c-1) \;\mbox{and} \; N^U= \Oc(0,2,2c-b-1)$$
 with $b>0$ and $c \geq b+1$ in the case $(ii)_6$, and $a = b = 0$ and $c \geq 1$ in case $(iii)_6$,   
 one easily computes 

\begin{eqnarray*}
{\rm ext}^1(L^U,N^U) = 0 \;\; {\rm and} \;\;  {\rm ext}^1(N^U, L^U) = \left\{\aligned 
2c-b-2& \qquad \mbox{if $c>b+1$}\\ 
c-1& \qquad \mbox{if $c=b+1.$}\\ 
\endaligned\right. 
\end{eqnarray*}

Thus we have either the  splitting bundle $L^U \oplus N^U$, or if we are  in case $(ii)_6$, ${\rm ext}^1(N^U, L^U) = 2c-b-2 \geq b \geq 1,$ being $c \geq b+1$ and $b >0$  and therefore, there are non-trivial extensions

\begin{eqnarray}\label{extensionLuNu0}
0 \to L^U \to \cF_{L^U,N^U} \to N^U\to 0
\end{eqnarray}giving rise to rank-two h-Ulrich vector bundles 
with  using \eqref{eq:chow} and \eqref{eq:secondchern}) 
{\small
\begin{equation}\label{eq:chernFLuNu0}
\;\;\; c_1(\cF_{L^U,N^U}) = \Oc(1,4,3c-b-2) \; \; {\rm and} \; \; c_2(\cF_{L^U,N^U}) = 2 \xi.C_0 + (2c-b-1)\xi.F + 2(3c-b-2)C_0.F.
\end{equation}}

\smallskip

\noindent
If we are in the case $(iii)_6$, ${\rm ext}^1(N^U, L^U) = 2c-2$ which once again implies that, when $c=1$ only the splitting bundle $L^U \oplus N^U$ can occur, otherwise, when $ c \geq 2$, we get vector bundles arising from non-trivial extensions such that 
{\small
\begin{equation}\label{eq:chernFLuNu00}
c_1(\cF_{L^U,N^U}) = \Oc(1,4,3c-2) \; \; {\rm and} \; \; c_2(\cF_{L^U,N^U}) = 2 \xi.C_0 + (2c-1)\xi.F + 2(3c-2)C_0.F.
\end{equation}} In  any case we have 
{\small
\begin{eqnarray*} 
\qquad \quad c_1(\cF_{L^U,N^U}(-h)) = c_1(\cF_{N^U,L^U}(-h)) = \mathcal O(-1,0,3c-2b-2) \;\; {\rm and} \\ \nonumber
c_2(\cF_{L^U,N^U}(-h)) = c_2(\cF_{N^U,L^U}(-h))= \xi.C_0 -(2c-b-1) \xi.F+c\, C_0.F. 
\end{eqnarray*}
}

Thus, with same arguments as in the above cases,  $\cF_{L^U,N^U}(-h)$ (and also $\cF_{N^U,L^U}(-h)$) cannot be a pull-back from neither $\FF_0$ nor $\FF_b$, when $b >0=a$,  or simply from $\FF_0$, when $a=b=0$, for any $c \geq b+1$. In particular, when $a=b=0$ and $c=1$, i.e. once again in the case of the {\em Segre threefold}, we know that we have only the trivial bundle $L^U \oplus N^U$, where $L^U=h_1+2h_2$ and $ N^U=2h_2+h_3$, (cfr. \cite[Lemma 2.4]{cas-fae-ma}).

\bigskip

Recall now that, from Theorem \ref{prop:LineB}-$(iii)$, when $a=b=0$ any $c \geq 1$ we are dealing with $X \subset \Pp^{4c+3}$ a {\em Segre-Veronese threefold} of degree $d = 6c$ and sectional genus $g = 2c-1$, which is the embedding of $\Pp^1 \times \Pp^1 \times \Pp^1$ via  $h = \xi + C_0 + cF =   h_1 + h_2 + c h_3$, the latter notation as in Remark \ref{rem:1astratto}. In this case,  we have an extra-pair of $h$-Ulrich line bundles, namely 
$$M= \mathcal O (2,1,c-1) \;\; \mbox{\and its Ulrich dual} \;\; M^U= \mathcal O (0,1,3c-1)$$ 
which obviusly gives rise to the following extra sub-cases.

\noindent
\smallskip
%%%%%%%%%%%%%%%%%%%%%%%%%%%
%
%  	Case 7
%
%%%%%%%%%%%%%%%%%%%%%

\item[{\bf Case 7 $(iii)_7$}] 
Dealing with the pair $M$ and $M^U$ is equivalent to case $(iii)_2$; indeed, from Theorem \ref{prop:LineB} we know that,  replacing the role between $a$ and $b$, which in this case are both $0$,  the second involution on the set of $h$-Ulrich line bundles on $X$ maps $M$ to $L^U$ and correspondingly $M^U$ to $L$. Thus computations are exactly as in $(iii)_2$ and we get rank-two $h$-Ulrich vector bundles s.t.  
\begin{eqnarray}\label{eq:chernFM00}
c_1(\cF_M)=c_1(\cF_M')& = & \mathcal O (2,2,2(2c-1)), \\ \nonumber
c_2(\cF_M)=c_2(\cF_M') & = & 2 \xi.C_0 + 2(3c-1) \xi.F + 2(2c-1) C_0.F.
\end{eqnarray}

\bigskip

\noindent
Concerning {\em mixed type} extensions, where one of the line bundles of the pair is either $M$ or $M^U$, they hold for $a = b = 0$ and  any $c \geq 1$. The remaining cases are all of this type.

\bigskip

%%%%%%%%%%%%%%%%%%%%%%%%%%%
%
%  	Case 8
%
%%%%%%%%%%%%%%%%%%%%%

\item[{\bf Case 8 $(iii)_8$}] 
 This case $(iii)_8$ deals with the pair
$$L= \Oc(1,0,3c-1) \;\; \mbox{and} \;\; M= \Oc (2,1,c-1).$$ We have   
${\rm ext}^1(M,L) = 0$ and ${\rm ext}^1(L, M) = 8c-4\geq 4$,  as $c \geq 1$, so we get vector bundles arising from non-trivial extensions such that   
\begin{eqnarray}\label{eq:chernFML00}
c_1(\cF_{M,L})=  \mathcal O (3,1,2(2c-1)), \\ \nonumber
c_2(\cF_{M,L})=  \xi.C_0 + (7c-3) \xi.F + (3c-1) C_0.F.
\end{eqnarray}

  In this case we have 
{\small
\begin{eqnarray*} 
\qquad \quad c_1(\cF_{M,L}(-h)) = c_1(\cF_{L,M}(-h)) = \mathcal O(3,1,4c-2) \;\; {\rm and} \\ \nonumber
c_2(\cF_{M,L}(-h)) = c_2(\cF_{L,M}(-h))=- \xi.C_0 +(3c-2) \xi.F-(c-2)\, C_0.F. 
\end{eqnarray*}
}
Thus $\cF_{M,L}(-h)$ (and also  the trivial bundle  $\cF_{L,M}(-h)$) cannot be a pull-back from  $\FF_0$, because of the presence of the summand $-\xi.C_0$ in $c_2(\cF_{L,M}(-h))$.

In particular,  if $c=1$, hence  $X$  is more specifically the {\em Segre} (or even {\em del Pezzo}, c.f. e.g. \cite{cfk2}) threefold, as previously computed, we have  
 $c_1(\cF_{L,M})= 3h_1+h_2+2h_3$ and   $c_2(\cF_{L,M})= h_1h_2+4h_1h_3+2h_2h_3$ which,  up to permutation of $h_1,\,h_2,\;h_3$, coincides with the bundles mentioned in \cite[Thm.\,A-(3), Thm.\;B-(3),\;Lemma\,7.2]{cas-fae-ma}).

%%%%%%%%%%%%%%%%%%%%%%%%%%%
%
%  	Case 9
%
%%%%%%%%%%%%%%%%%%%%%

\item[{\bf Case 9 $(iii)_9$}] This case is dealing with $L$ and $M^U$. Recall that under  the action of the second involution on the set of $h$-Ulrich line bundles as in Remark \ref{rem:3.2},  the pair $(L,M^U)$ is {\em self dual} because $L$ is interchanged with $M^U$ and viceversa;  moreover one easily computes 
${\rm ext}^1(L,M^U) = {\rm ext}^1(M^U,L) = 0$, i.e. there are no $h$-Ulrich rank-two extensions unless $ \cF_{L,M^U} =  \cF_{M^U,L} = L \oplus M^U$.  In this case
 
{\small
\begin{eqnarray*} 
\qquad \quad c_1(\cF_{L,M^U}) = \mathcal O_0(1,1,2(3c-1)) \;\; {\rm and} \\ \nonumber
c_2(\cF_{L,M^U}) = \xi.C_0 +(3c-1) \xi.F+(3c-1)\, C_0.F. 
\end{eqnarray*}}
 Moreover
{\small
\begin{eqnarray*} 
\qquad \quad c_1(\cF_{L,M^U}(-h))  = \mathcal O_0(-1,-1,4c-2) \;\; {\rm and} \\ \nonumber
c_2(\cF_{L,M^U}(-h)) = \xi.C_0 -(2c-1) \xi.F-(2c-1)\, C_0.F. 
\end{eqnarray*}
}
Thus $\cF_{L,M^U}(-h)$  cannot be a pull-back from  $\FF_0$, because of the presence of the summand $\xi.C_0$ in $ c_2(\cF_{L,M^U}(-h))$. In particular when $c=1$, i.e. once again the case of {\em Segre} (or even {\em del Pezzo}, c.f. e.g. \cite{cfk2}) threefold, we see that   $L=h_1+2h_3$, $ M^U=h_2+2h_3$, (cfr. \cite[Lemma 2.4]{cas-fae-ma}), thus  $c_1(\cF_{L,M^U})= h_1+2h_2+4h_3$ and   $c_2(\cF_{L,M^U})= h_1h_2+2h_1h_3+2h_2h_3$. 

%%%%%%%%%%%%%%%%%%%%%%%%%%%
%
%  	Case 10
%
%%%%%%%%%%%%%%%%%%%%%

\item[{\bf Case 10 $(iii)_{10}$}] This case deals with $L^U$ and $M$, for any $c\geq 1$, and it gives rise to a {\em self-dual} pair  determining once again only the  splitting bundle $L^U \oplus M$ as in case $(iii)_9.$   
In particular when $c=1$, i.e. in the case of  {\em Segre} (or even {\em del Pezzo}, c.f. e.g. \cite{cfk2}) threefold, $L^U=h_1+2h_2$, $M=2h_1+h_2$, as in \cite[Lemma 2.4]{cas-fae-ma}.

%%%%%%%%%%%%%%%%%%%%%%%%%%%
%
%  	Case 11
%
%%%%%%%%%%%%%%%%%%%%%

\item[{\bf Case 11 $(iii)_{11}$}] This case deals with $L^U$ and $M^U$, for any $c\geq 1$. Because of the second involution existing on the set of $h$-Ulrich bundles, this case is equivalent to $(iii)_8$ and thus the Chern classes are as in $(iii)_8$, but written in the base $\eta =C_0, D_0 = \xi, G=F$.

%%%%%%%%%%%%%%%%%%%%%%%%%%%
%
%  	Case 12
%
%%%%%%%%%%%%%%%%%%%%%
\item[{\bf Case 12 $(iii)_{12}$}] Here we are dealing, for $a=b=0$, with $N$ and $M$, for any $c\geq 1$. Because of the second involution existing on the set of $h$-Ulrich bundles, this case is equivalent to $(iii)_6$. 
%%%%%%%%%%%%%%%%%%%%%%%%%%%
%
%  	Case 13
%
%%%%%%%%%%%%%%%%%%%%%
\item[{\bf Case 13 $(iii)_{13}$}] This case deals with $N$ and $M^U$, for any $c\geq 1$, it is equivalent to $(iii)_4$ because of the second involution on the set of $h$-Ulrich bundles on $X$. 
%%%%%%%%%%%%%%%%%%%%%%%%%%%
%
%  	Case 14
%
%%%%%%%%%%%%%%%%%%%%%
\item[{\bf Case 14 $(iii)_{14}$}]

 We are dealing with extensions  of $N^U$ by $M$ and viceversa, for any $c\geq 1$; by the action of the second involution  on the set of $h$-Ulrich bundles on $X$, this case is equivalent to $(iii)_5$. 

%%%%%%%%%%%%%%%%%%%%%%%%%%%
%
%  	Case 15
%
%%%%%%%%%%%%%%%%%%%%%
\item[{\bf Case  15 $(iii)_{15}$}] Recall that this case, which holds for  $a=b=0$ with $N^U$ and $M^U$ for any $c\geq 1$, is equivalent to case $(iii)_3$ because of the action of the second involution  on the set of $h$-Ulrich bundles on $X$.

\end{itemize}

\bigskip

\subsection{Rank-two Ulrich bundles arising from extensions}\label{Ulrichrk2extension} Here we deal with possible modular components arising from extension spaces determined in \S\;\ref{extensionsrk2}.

We start with the case $a=0$. Under this assumption  all {\bf Cases} $(ii)_n$ with $1 \leq n \leq 6$, which hold  for $b >0=a$ and $c \geq b+1$,  and all {\bf Cases} $(iii)_m$ with $1 \leq m \leq 15$, which occur for $b=a=0$ and $c \geq 1$, as in Section\;\ref{extensionsrk2}, must be considered.

In the following Theorem \ref{thm:rk2case1} we are going to study possible modular components arising fom {\bf Cases}  $(ii)_n$, $1 \leq n \leq 6$, when $b>0=a$, and as $(iii)_m$, $1 \leq m \leq 15$, when $b=a=0$. In view of the {\em equivalence relation} induced by the action of the second invlution on the set of $h$-Ulrich line bundles as in Remark \ref{rem:3.2}, one can drastically reduce case analysis to a smaller set of representatives of such equivalence classes. Therefore {\bf Case (k)} appearing in \eqref{eq:chern20} below will be related to such classes of representatives which we will explain below are reduced to {\bf Cases} $(ii)_k$, for $1 \leq k \leq 4$, and {\bf Cases} 
$(iii)_k$, for $k \in \{1,2,3,4,8\}$ previously studied.

 To study modular components, our approach consists in either deforming the aforementioned rank-two extensions in a (flat and irreducible) {\em smooth modular family} whose general point $[\mathcal U]$ satisfies properties as in the statement or otherwise, when this deformations does not occur, to prove that our modular component consists of only a point as in the statement.

 Equivalence-class reduction is explained by the following arguments: we recall from Remark \ref{rem:3.2}  that the {\em double structure} of $X$ as threefold scroll, arising from Proposition \ref{prop:joan} and Corollary \ref{cor:ffOct}, induces a {\em second involution} on the set of its $h$-Ulrich line bundles which, on the pair $(N,N^U)$, acts exactly as the {\em first (natural) involution} induced by {\em Ulrich pairing}, i.e. that mapping any $h$-Ulrich line bundle to its {\em Ulrich dual}, whereas for line bundles $L$, $L^U$, $M$ and $M^U$ the second involution acts differently as t sends $L$ to $M^U$ and $L^U$ to $M$ (and viceversa).

It is clear therefore that the pair $(N,N^U)$ is {\em self dual} w.r.t. both involutions, whereas e.g. pairs $(L,L^U)$ and $(M,M^U)$ are equivalent up to the action of the two involutions and up to the order of the pair; in other words rank-two extensions as in {\bf Case} $(ii)_1$ and $(iii)_1$ stand as a case alone, whereas rank-two extensions as in {\bf Cases} $(ii)_2$-$(iii)_2$ and in {\bf Case} $(iii)_7$ can be considered equivalent up to the action of the two involutions on such pairs and up to replacing the role between $a$ and $b$. This means that arguments to prove {\bf Case (2)} will be identical to those which prove {\bf Case (7)}; so that we will simply write $(2) \sim (7)$ to recall this induced equivalence  relation. 

With a similar reasoning we will get equivalence   relation,  up to the action of the two involutions and  up to replacing the role between $a$ and $b$, for rank-two extensions in \S\,\ref{Ulrichrk2extension} related to the following {\bf Cases} as in \eqref{eq:chern20}: 
$$(3)\sim (6) \sim (12) \sim (15), \; (4)\sim (5) \sim (13) \sim (14),\; (8) \sim (11) \; {\rm and} \; (9) \sim (10).$$

Therefore,   to analyze modular components of rank-two extensions,   it  will  be  enough  to deal with the equivalence-class representatives:  
$$(1), \; (2), \; (3), \; (4),\; (8) \; {\rm and} \; (9).$$We exclude the case $(9)$ since it consists only of decomposable bundles.

 \begin{theo}\label{thm:rk2case1}  Let  $b \geq 0=a$ and $c \geq b+1$ be integers and let $(X, \mathcal O_X(1)) \cong (\mathbb P_{\FF_0}(\mathcal E_0), h)$ be a threefold  scroll over $\FF_0$ in $\Pp^n$ as in \eqref{eq:Xe} and \eqref{eq:nde} and let $\varphi: X \to \FF_0$ denote the scroll map. Use notation \eqref{eq:secondchern} and \eqref{eq:may2.triples} for Chern classes of rank-two vector bundles on $X$. 
 
Then the moduli space of rank-two indecomposable $h$-Ulrich vector bundles $\cU$ on $X$ and  with Chern classes $(c_1;\,c_2) := \left(c_1(\cU); c_2(\cU)\right)$ as in the following pairs:
{\tiny
\begin{equation}\label{eq:chern20} 
(c_1;\,c_2) = \left\{
\begin{array}{ll}
 (1): \;\;\; \left(\mathcal O(2,2,4c-b-2)\, ;\;\; 4 \xi.C_0 + 2(2c-b-1)\xi.F+2(2c-1)C_0.F\right), & \mbox{$\forall\; b\geq 0, \; c \geq b+1$}\\
(2): \;\;\; \left(\mathcal O(2,2,4c-b-2)\, ;\;\; 2 \xi.C_0 + 2(2c-b-1)\xi.F+2(3c-b-1)C_0.F\right), & \mbox{$\forall\; b\geq 0, \; c \geq b+1$}\\
(3):\;\;\; \left(\mathcal O(3,0,5c-b-2)\, ;\;\;(8c-4b-3)\xi.F\right), & \mbox{$\forall\; b> 0, \; c \geq b+1$}\\
(4): \;\;\; \left(\mathcal O(1,2,5c-2b-2)\, ;\;\;2 \xi.C_0 + (2c-b-1)\xi.F+2(3c-b-1)C_0.F\right), & \mbox{$\forall\; b\geq 0, \; c \geq b+1$}\\
(8):\;\;\; \left(\mathcal O(3,1,2(2c-1))\, ;\;\;  \xi.C_0 + (7c-3)\xi.F+(3c-1)C_0.F\right), & \mbox{$\forall\; b=0, \; c \geq 1$}
\end{array}
\right. 
\end{equation}} is not empty, containing an irreducible component  $\mathcal M_k:= \mathcal M_k^{\mathcal U}(2; c_1, c_2)$, with $k=1,2,3,4,8$ according to the choice of pair $(c_1,c_2)$ in {\bf Case (k)} as in \eqref{eq:chern20}.

Moreover, 

\medskip 

\noindent
$\bullet$ in {\bf Case (1)} as in \eqref{eq:chern20}, the situation is as follows:

\medskip

\begin{itemize}
\item[($1_a$)] if $0\leq b \leq 1$ and $c \geq b+1$, then $\mathcal M_1$ is generically smooth, of dimension 
\begin{equation}\label{eq:dimM2}
\dim (\mathcal M_1) = 5.
\end{equation} Its general point  $[\cU] \in \mathcal M_1$ corresponds to a slope-stable, rank-two $h$-Ulrich vector bundle $\cU$ which is {\em special} and s.t.
\begin{equation}\label{eq:cohomologyU2}
h^0(\cU \otimes \cU^{\vee}) = 1, \;\; h^1(\cU \otimes \cU^{\vee}) =  \dim (\mathcal M_1)\,\; {\rm and} \;\, h^j(\cU \otimes \cU^{\vee}) = 0, \; 2 \leqslant j \leqslant 3,
\end{equation}of $h$-slope  $\mu(\cU) = 4(2c-b)-2$. 

\noindent
Furthermore, deformations of 
$h$-Ulrich bundles $\cF_N$ and $\cF'_N$ as in \eqref{extension2} and \eqref{extension2'} in {\bf Cases} $(ii)_1$ and $(iii)_1$ belong to the component $\mathcal M_1$. 

Finally, when $b=0$, $\mathcal M_1$ is also {\em rational}.  

\medskip

\item[($1_b$)] if otherwise $b \geq 2$ and $c \geq b+1$ then: 

\medskip

\noindent
$(*)$ if $\mathcal M_1$ contains stable points, then $\dim(\mathcal M_1) \geq 5$ and its general element corresponds to a slope-stable, rank-two $h$-Ulrich vector bundle $\cU$ which is {\em special} of $h$-slope  $\mu(\cU) = 4(2c-b)-2$;

\medskip

\noindent
$(**)$ otherwise, the component $\mathcal M_1$ is a point, consisting of a unique $S$-equivalence representative determined by a {\em polystable bundle} which is $h$-Ulrich and {\em non-special}. 
\end{itemize}

\bigskip

\noindent
$\bullet$ In {\bf Case (2)}, as in  \eqref{eq:chern20}, the component $\mathcal M_2$ is generically smooth, {\em rational} and of dimension 
\begin{equation}\label{eq:dimM}
\dim (\mathcal M_2) = 4(2c-b)-3,
\end{equation}whose general point  $[\cU] \in \mathcal M_2$ 
corresponds to a slope-stable, rank-two $h$-Ulrich vector bundle $\cU$ which is {\em special} and s.t.
\begin{equation}\label{eq:cohomologyU}
h^0(\cU \otimes \cU^{\vee}) = 1, \;\; h^1(\cU \otimes \cU^{\vee}) =  \dim (\mathcal M_2)\,\; {\rm and} \;\, h^j(\cU \otimes \cU^{\vee}) = 0, \; 2 \leqslant j \leqslant 3,
\end{equation}of $h$-slope   $\mu(\cU) = 4(2c-b)-2$. 

Furthermore, deformations of 
$h$-Ulrich bundles $\cF_L$ and $\cF'_L$ as in \eqref{extension1}  in {\bf Cases} $(ii)_2$-$(iii)_2$   belong to the component $\mathcal M_2$.

\bigskip 

\noindent
$\bullet$  For all the other {\bf Cases (k)} with $k \in \{3,4,8\}$ as in \eqref{eq:chern20}, the component $\mathcal M_k$ is a {\em point}, consisting of a unique $S$-equivalence representative determined by a polystable bundle which is $h$-Ulrich and {\em non-special}.

\end{theo}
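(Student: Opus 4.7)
The plan is to exploit the explicit rank-two extensions constructed in Section~\ref{extensionsrk2} and to analyse, case by case, how far the resulting $h$-Ulrich bundles can be deformed inside the moduli space of semistable sheaves with the prescribed Chern classes. First I would produce an explicit point of $\cM_k$ for each of the five pairs $(c_1,c_2)$ in \eqref{eq:chern20} by taking the non-trivial extension of the two Ulrich line bundles from Theorem~\ref{prop:LineB} indicated in Section~\ref{extensionsrk2}; that bundle has exactly the prescribed Chern classes and, being an extension of Ulrich sheaves, is itself Ulrich. The common slope $\mu(\cU)=d+g-1=4(2c-b)-2$ is forced by \eqref{slope} and \eqref{eq:nde}, while specialness in Cases~(1) and~(2) is immediate from $c_1=4h+K_X$ together with \eqref{formulaKX}.

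The second step is the (semi)stability analysis. By Theorem~\ref{thm:stab}(a) every such $\cU$ is slope-semistable, and by Theorem~\ref{thm:stab}(b) any rank-one subsheaf of the same slope is forced to be Ulrich, hence one of the finitely many line bundles classified in Theorem~\ref{prop:LineB}. In Cases~(3), (4) and (8) exactly one of the two $\mathrm{Ext}^1$ groups between the two Ulrich line bundles involved is non-zero (as the corresponding computations in Section~\ref{extensionsrk2} show), so the sub-line bundle of the extension remains an Ulrich destabilizing sub under any deformation preserving $(c_1,c_2)$: hence every member of $\cM_k$ is strictly semistable and its $S$-equivalence class is the polystable direct sum of the two line bundles in question. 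This yields the single-point statement, and non-specialness in each of these cases follows from direct comparison of \eqref{eq:chernFNL0}, \eqref{eq:chernFLNu0}, \eqref{eq:chernFML00} against $4h+K_X$.

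For the positive-dimensional Cases~(1) and (2) the heart of the argument is the computation of $h^\bullet(\cU\otimes\cU^{\vee})$ at a general extension point. I would tensor the extension $0\to N^U\to\cU\to N\to 0$ (resp.\ its $L,L^U$-analogue) with the dual extension and reduce everything to cohomologies of differences of Ulrich line bundles on $X$, which are controlled by Lemma~\ref{lem:computationsjoan} together with the projection formula and Serre duality on $\FF_0$. The target output is $h^0=1$ (simplicity, Theorem~\ref{thm:stab}(d)), $h^2=h^3=0$, and $h^1$ equal to~$5$ in Case~(1) with $b\le 1$ and to $4(2c-b)-3$ in Case~(2); this simultaneously delivers generic smoothness of $\cM_k$ at $[\cU]$ and the claimed dimension. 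Slope-stability of the general member then follows from Theorem~\ref{thm:stab}(c), since the locus of extensions admitting an Ulrich line bundle as a sub is a proper closed subset of the parameter space (the list of Ulrich line bundles is finite by Theorem~\ref{prop:LineB}). Rationality in Case~(2) and in Case~(1) with $b=0$ follows from the explicit birational model of $\cM_k$ as a geometric quotient of the affine extension space by its one-dimensional scaling action. Finally, the dichotomy for Case~(1) with $b\ge 2$ is handled by the same cohomology computation: the presence of a stable point forces $h^1\ge 5$ via the deformation argument above, while if no stable point exists every member of $\cM_1$ must be strictly semistable, hence $S$-equivalent to the unique polystable $N\oplus N^U$ with the prescribed Chern classes. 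The main technical obstacle I expect is the cohomological bookkeeping in the third step, where the non-vanishing contributions in degree~$1$ have to match the claimed modular dimension uniformly in $(a,b,c)$, and the finite list of Ulrich line bundles of Theorem~\ref{prop:LineB} must be tracked to certify $h^2=h^3=0$.
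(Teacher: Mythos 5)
Your overall skeleton (explicit extension points, the computation of $h^\bullet(\cU\otimes\cU^{\vee})$ to get smoothness and dimension, and a dimension comparison with the extension loci to get slope-stability of the general member) is the same as the paper's, but two of your steps would fail as stated.

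The clearest problem is the rationality claim. You propose to realize $\cM_2$ (and $\cM_1$ for $b=0$) birationally as the quotient of the affine extension space by its scaling action. This cannot work: $\dim \Pp({\rm Ext}^1(L,L^U))=3(2c-b)-4$, while your own cohomology computation gives $\dim\cM_2=h^1(\cU\otimes\cU^{\vee})=4(2c-b)-3$, so the extension locus is a \emph{proper} closed subvariety of the component, not a birational model of it (for $\cM_1$ with $b=0$ the discrepancy is $2$ versus $5$). The paper instead uses that every $\cF_L$ is of the form $h\otimes\varphi^*(\cG)$ with $\cG$ a rank-two bundle on $\FF_0$ as in \eqref{extension1Q}, and transports rationality from the moduli space $\cM_{\FF_0}(2;0,2c-b-2)$, which by \cite[Theorem A]{CM} is smooth, irreducible, rational and of the \emph{same} dimension $4(2c-b)-3$, so that the natural map $\tau\colon \cG\mapsto h\otimes\varphi^*(\cG)$ is injective, dominant, hence birational. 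Some geometric input of this kind is indispensable; the scaling-quotient picture does not supply it.

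The second gap is in the single-point Cases $(3)$, $(4)$, $(8)$. You argue that because one of the two ${\rm Ext}^1$ groups vanishes, ``the sub-line bundle remains an Ulrich destabilizing sub under any deformation'', hence every member of $\cM_k$ is strictly semistable. As stated this is circular: the question is precisely whether a deformation of $\cF_{N,L}$ can cease to be an extension of the two line bundles, and the vanishing of ${\rm ext}^1(N,L)$ does not by itself exclude this. The paper closes the gap by the equality $h^1(\cF_{N,L}\otimes\cF_{N,L}^{\vee})=1-\chi(\cF_{N,L}\otimes\cF_{N,L}^{\vee})={\rm ext}^1(L,N)-1=\dim\Pp({\rm Ext}^1(L,N))$, i.e. the full deformation space of $\cF_{N,L}$ coincides with the family of extensions, so there is nothing to deform to and the GIT quotient contracts all of $\Pp({\rm Ext}^1(L,N))$ to the polystable point $[L\oplus N]$. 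You need this equality (or an equivalent argument) to conclude. A smaller omission: you never address why deformations of the two extension directions $\cF_L$ and $\cF_L'$ land in the \emph{same} component $\cM_2$; the paper handles this via smoothness of the point $[L\oplus L^U]$ of the relevant Quot scheme.
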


\begin{proof} {\bf Case (2)}: this holds for any $b \geq 0$ and $c \geq b+1$, and it is obviously related to extension spaces in {\bf Cases } $(ii)_2$ and $(iii)_2$ as in  \S\,\ref{Ulrichrk2extension}. 

To prove the existence of the modular component $\mathcal M_2$ and its generic smoothness, to compute its dimension and to describe its general point as in the statement, consider non--trivial extensions \eqref{extension1}, where we found ${\rm ext}^1(L,L^U))= 3(2c-b)-3\geqslant 3$ and where any vector bundle $\cF_L$ arising from an extension as in \eqref{extension1} is $h$-Ulrich. Since $L$ and $L^U$ are non-isomorphic line bundles, with the same $h$-slope $\mu(L)=\mu(L^U)=4(2c-b)-2$ then, by \cite[Lemma 4.2]{c-h-g-s}, any vector bundle $\cF_L$ arising from such a non-trivial extension is a simple vector bundle, i.e. $h^0(\cF_L \otimes 
 \cF_L^{\vee})=1$, in particular indecomposable, and of $h$-slope 
 $\mu(\cF_L) = 4(2c-b)-2$ from \eqref{slope}. 

We now want to show that  $h^2(\cF_L\otimes\cF_L^{\vee}) = 0 = h^3(\cF_L\otimes\cF_L^{\vee})$, as in \eqref{eq:cohomologyU}, and that 
$\chi(\cF_L\otimes\cF_L^{\vee})=4-4(2c-b)$. To do this, we tensor \eqref{extension1} by $\cF_L^{\vee}$ to get
\begin{eqnarray}
\label{extension1tensorFdual}
0 \to L^U \otimes \cF_L^{\vee}   \to \cF_L\otimes \cF_L^{\vee} \to  L\otimes \cF_L^{\vee} \to 0.
\end{eqnarray} Dualizing \eqref{extension1} gives 
\begin{eqnarray}
\label{extension1dual}
0 \to L^{\vee}  \to \cF_L^{\vee} \to (L^U)^{\vee}\ \to 0
\end{eqnarray} Tensoring \eqref{extension1dual} with $L^U$ and $L$, respectively, gives
\begin{eqnarray}
\label{extension1dualL1}
0 \to L^{\vee}\otimes L^U(= \Oc_0(0,2,b-2c)) \to L^U \otimes \cF_L^{\vee} \to {\cO}_{X} \to 0
\end{eqnarray} 
\begin{eqnarray}
\label{extension1dualL2}
0 \to  {\cO}_{X} \to L \otimes \cF_L^{\vee} \to L \otimes (L^U)^{\vee}(=\Oc(0,-2,2c-b)) \to 0.
\end{eqnarray} Simplicity of $\cF_L$ gives $h^0(X,  \cF_L\otimes \cF_L^{\vee} )=1$. The remaining cohomology $h^j(X,  \cF_L\otimes \cF_L^{\vee} )$, for $1 \leqslant j \leqslant 3$  can be easily computed from the cohomology sequence associated to \eqref{extension1dualL1} and \eqref{extension1dualL2}.  Indeed, $h^i(\cO_{X})=0$ if $i\geqslant 1$ and  $h^0(\cO_{X})=1$ whereas, with the use of Lemma \ref{lem:computationsjoan}, one has:  

 \begin{eqnarray}\label{eq:calcoliutili}
 h^i(X, \Oc(0,2,b-2c))&=\left\{
    \begin{array}{ll}
0 &\mbox{ if $i=0,2,3$}\\
     3(2c-b)-3&   \mbox{ if  $i=1$}
    \end{array}
\right.\nonumber
 \end{eqnarray}and 
 \begin{eqnarray}\label{eq:calcoliutili2}
 h^i(X, \mathcal O(0,-2,2c-b))& =\left\{
    \begin{array}{ll}
0 &\mbox{ if $i=0,2,3$}\\
     2c-b+1&   \mbox{ if  $i=1$}
    \end{array}
\right.\nonumber
 \end{eqnarray} as $c \geq b+1$.  It thus follows that $h^2(\cF_L\otimes\cF_L^{\vee}) = 0 = h^3(\cF_L\otimes\cF_L^{\vee})$; so, from \eqref{extension1tensorFdual}, we have that 
\begin{eqnarray*}\chi(\cF_L\otimes \cF_L^{\vee} )=\chi( L_ \otimes \cF_L^{\vee})+\chi((L^U) \otimes \cF_L^{\vee})= 4 - 4(2c-b).
\end{eqnarray*}

Now, since $\cF_L$ is simple with $h^2(\cF_L\otimes \cF_L^{\vee}) = 0$, from  \cite[Proposition 2.10]{c-h-g-s} the bundle $\cF_L$ belongs to a (flat and irreducible) {\em smooth modular family} whose general bundle $\mathcal U$ is also $h$-Ulrich, with Chern classes as in \eqref{eq:chern20}-(1), indeed $\cF_L$ is $h$-Ulrich with those Chern classes and, we recall, that Ulrichness is an open condition (by semi-continuity) as well as Chern classes are invariants on the irreducible flat family. 

In particular, being $h$-Ulrich, the $h$-slope of $\mathcal U$ is $\mu(\mathcal U)$ given by \eqref{slope}, as stated. Notice further that, since $\omega_X = \mathcal O(-2,-2,-(b+2))$ and 
$h = \mathcal O(1,1,c)$, then 
\begin{eqnarray*}K_{X}+4h=\mathcal O(2, 2, 4c-b-2) &=&c_1(\cF_L) = c_1(\cU).
\end{eqnarray*} This, together with the fact that $\cU$ is of rank two, gives 
$\cU^{\vee} \cong \cU(- c_1(\cU)) = \cU (-K_X - 4 h)$, i.e. that $\cU^{\vee}(K_X+ 4 h) \cong \cU$ in other words 
$\cU$ is isomorphic to its Ulrich dual bundle so $\cU $ is {\em special} as in Definition \ref{def:special}. 

Recall now that, by semi-continuity and by invariance of the Euler characteritic $\chi(-)$ in the (flat and irreducible) smooth modular family, cohomological conditions \eqref{eq:cohomologyU} hold true. By Theorem \ref{thm:stab}--(b) (cf. also \cite[Sect 3, (3.2)]{b}), if $ \cU $ were not stable, it would be presented as an extension of Ulrich line bundles on $X$. In such a case, by classification of Ulrich line bundles given in Theorem \ref{prop:LineB} and all the 
Chern classes $(c_1;\, c_2)$ computed in \S\;\ref{extensionsrk2}, we see that the only possibilities for $\cU$ to arise as an extension of Ulrich line bundles should be extensions \eqref{extension1}, by Chern classes reasons. In both cases therein the dimension of (the projectivization) of the corresponding families of extensions is either $3(2c-b)-4$ or $2c-b$. On the other hand, by semi-continuity on the smooth modular family, one has 
$$h^j(\cU\otimes \cU^{\vee}) = h^j(\cF_L\otimes \cF_L^{\vee}) = 0, \; 2 \leqslant j \leqslant 3, \; {\rm and} \; h^0(\cU\otimes \cU^{\vee}) = h^0(\cF_L\otimes \cF_L^{\vee}) = 1,$$thus
$$ h^1(\cU\otimes \cU^{\vee}) = 1 - \chi (\cU\otimes \cU^{\vee} ) = 1 - \chi(\cF_L\otimes \cF_L^{\vee}) = h^1(\cF_L\otimes \cF_L^{\vee}) = 4(2c-b)-3,$$as computed above.  In other words the smooth modular family, whose general point is $[\cU]$, is of dimension $4(2c-b) -3$, which is bigger than $3(2c-b)-4$ 
and $2c-b$,  for any $b \geq 0$ and $c \geq b+1$. This shows that the bundle $\cU$ corresponding to such a general point in the smooth modular family is stable, and also slope-stable (cf. Theorem \ref{thm:stab}-(c) above). By slope-stability of $\cU$, we deduce that the moduli space of rank-two $h$-Ulrich bundles with Chern classes as in \eqref{eq:chern20}-(2) is not empty and 
it contains an irreducible component $\mathcal M_2 = \mathcal M(2; c_1, \, c_2)$, determined by the GIT image of the aforementioned (flat and irreducible) smooth modular family, where 
$[\cU] \in \mathcal M_2$ is a smooth point, as $h^2(\cU\otimes \cU^{\vee})=0$. Thus, 
$\mathcal M_2$ is generically smooth, of dimension $h^1(\cU\otimes \cU^{\vee}) = 4(2c-b)-3$, as in \eqref{eq:dimM}, from which one also deduces that $[\mathcal U]$ is a general point in  $\mathcal M_2$,  as stated.

To prove rationality of $\mathcal{M}_2$, recall that  we proved that 
$\cF_L = h  \otimes  \varphi^*(\mathcal G)$ where  $\cG$ is a rank-two vector bundle on $\FF_0$ arising as non-trivial extensions in \eqref{extension1Q}. To avoid confusion, let ${\overline {c_1}}  :=c_1(\cG)=\Oc_{\FF_0}(0,2c-b-2)$ and $\overline{c_2}:=c_2(\cG)=2c-b$. Such bundles on $\FF_0$ have been studied in \cite[Theorem\, A]{CM}, where it has been proved that their moduli space $\mathcal M_{\FF_0}(2; 0, 2c-b-2)$  is smooth, irreducible and {\em rational} 
of dimension  $4\overline {c_2}-\overline {c_1}^2-3=4(2c-b)-3$.  We have a natural map 
$$\mathcal M_{\FF_0}(2; 0, 2c-b-2) \stackrel{\tau}{\longrightarrow} \mathcal{M}_2, \;\; 
\cG \mapsto \tau(\mathcal G):=  h  \otimes  \varphi^*(\mathcal G) = \cF_L.$$
Taking any section $\sigma:\FF_0\rightarrow X_0$ we have 
$$id_{\FF_0}^*=(\varphi\circ\sigma)^*=\sigma^*\circ\varphi^*$$
so $\tau$ is injective and $\sigma^*$ surjective. Since $\cM_2$ and $\mathcal M_{\FF_0}(2; 0, 2c-b-2)$ have the same dimension, $\tau$ is also dominant, therefore, by injectivity $\tau$ is also birational. Thus, being 
$\mathcal M_{\FF_0}(2; 0, 2c-b-2)$  {\em rational} 
by \cite[Theorem\, A]{CM}, it follows that $\cM_2$ is rational too.

To complete the proof, we need to show that deformations of Ulrich bundles $\cF_L$ and $\cF_L'$ belong to the same modular component $\mathcal M_2$. To do this, one uses same arguments as in \cite[Claim\,3.3]{fa-fl2}; namely one shows that splitting bundle $L \oplus L^U$, corresponding to the zero-vector of their extension spaces,  is a smooth point of the {\em Quot-scheme} parametrizing bundles with given Hilbert polynomal, so there exists a unique irreducible component $\mathcal R$ of such a Quot scheme containing therefore also the simple bundles $\cF_L$ and $\cF_L'$ and all of their deformations, in particular containing $\mathcal U$ general in the smooth modular family. This unique component $\mathcal R$ then projects, via GIT quotient, onto the unique modular component $\mathcal M_2$ as above. 

 In  particular when also $b=0$, i.e. $X\cong \Pp^1 \times \Pp^1 \times \Pp^1$ is the {\em Segre-Veronese threefold} as in Remark \ref{rem:1astratto},  one gets in particular  $c_1(\cU) =2h_1+2h_2+2h_3  \;\; c_2(\cU)= 2h_1h_2+2h_1h_3+4h_2h_3,$ $\dim (\mathcal M_2) = 5$,  which is in accordance with \cite[Theorem B-(3)]{cas-fae-ma2}.

\bigskip

\noindent
{\bf Case (1)}: this case deals with extensions $(ii)_1$, when $b>0=a$ and $c \geq b+1$, and $(iii)_1$, when $a = b = 0$ and $c \geq 1$, as in \S\;\ref{extensionsrk2}, which arise from the line bundle pair $(N,N^U)$, which is a {\em self-dual} pair w.r.t. to both the involutions on the set of $h$-Ulrich line bundles of $X$.

The proof of most parts of the statement, like e.g. $h$-Ulrichness, speciality, computation of the $h$-slope and so on, are similar to those in the previous case. The main difference here is the cohomological behavior of the rank-two vector bundles involved. 

Indeed, taking $N$ and $N^U$, recall that ${\rm ext}^1(N, N^U)= 3$ whereas ${\rm ext}^1(N^U, N) = 3$, if $b=0$, or ${\rm ext}^1(N^U, N) = b+2$, when $b > 0$. Similar computations as above show that, for any $b\geq 0$, one has  
$$h^0(N^U\otimes N^{\vee}) = h^2(N^U\otimes N^{\vee}) = h^3(N^U\otimes N^{\vee})= 0 \;\; {\rm and}\;\; h^1(N^U\otimes N^{\vee}) = 3.$$On the contrary, one has 
$$h^0(N \otimes (N^U)^{\vee}) = h^3(N\otimes (N^U)^{\vee})= 0,$$  
 \begin{eqnarray}\label{eq:alfa}
\alpha : =h^1(N\otimes (N^U)^{\vee}) =\left\{\aligned 
 b+2& \qquad \mbox{if $b \geq 1$}\\ 
3& \qquad \mbox{if $b = 0 $}\\ 
\endaligned\right.
\end{eqnarray} and, using Leray's isomorphism and Serre duality, one gets that 
$h^2(X, N \otimes (N^U)^{\vee})= $ \linebreak $= h^0(\FF_0, \varphi^*\Oc_{\FF_0}(0,b-2))$, so  
\begin{eqnarray}\label{eq:delta}
\delta :=h^2(X, N \otimes (N^U)^{\vee})= \left\{\aligned 
 b-1& \qquad \mbox{if $b \geq 2$}\\ 
0 & \qquad \mbox{if $0 \leq b \leq 1$}\\ 
\endaligned\right.
\end{eqnarray} 
Thus, as for the proof of the previous case, we tensor \eqref{extension2} by $\cF_N^{\vee}$ to get
\begin{eqnarray}
\label{extension2tensorFdual}
0 \to N^U \otimes \cF_N^{\vee}   \to \cF_N\otimes \cF_N^{\vee} \to  N \otimes \cF_N^{\vee} \to 0 
\end{eqnarray} Dualizing \eqref{extension2}  and tensoring with $N^U$ and $N$, respectively, gives
\begin{eqnarray}
\label{extension2dualL1}\scriptstyle 
0 \to N^U \otimes N^{\vee}\to N^U \otimes \cF_N^{\vee} \to {\cO}_{X} \to 0 \;\;\;\; {\rm and} \;\;\;\; 0 \to  {\cO}_{X} \to N\otimes \cF_N^{\vee} \to N \otimes (N^U)^{\vee} \to 0.
\end{eqnarray} 
Thus, from \eqref{extension2tensorFdual} and \eqref{extension2dualL1}, we get
$$\chi( \cF_N\otimes \cF_N^{\vee}) = 2 + \chi(N^{\vee}\otimes N^U) + \chi(N \otimes (N^U)^{\vee}),$$which, from the above computations, is 
\begin{equation}\label{eq:chi2}
\chi(\cF_N\otimes \cF_N^{\vee}) = \delta - \alpha -1.
\end{equation}From the fact that $h^2(N^U\otimes N^{\vee}) = h^2(\mathcal O_{X})= 0$ and \eqref{extension2dualL1}, one deduces that \linebreak $h^2(N^U \otimes \cF_N^{\vee}) = 0$ whereas, from \eqref{eq:delta}, $h^2(\mathcal O_{X})= 0$ and \eqref{extension2dualL1}, one deduces that $h^2(N\otimes \cF_N^{\vee}) = \delta$. Thus, plugging in \eqref{extension2tensorFdual}, one gets 
$$h^2( \cF_N\otimes \cF_N^{\vee}) = h^2(N \otimes \cF_N^{\vee}) = \delta.$$

Therefore, from \eqref{eq:delta}, if we are in case $(1_a)$ as in the statement, namely $0 \leq b \leq 1$, we have $\delta =0$, i.e. $h^2(\cF_N\otimes \cF_N^{\vee}) =0$ so, since $\cF_N$ is simple and, reasoning as in the previous case, $\cF_N$ belongs to a (flat and irreducible) {\em smooth modular family} of dimension 
$$1 - \chi(\cF_N\otimes \cF_N^{\vee}) = 2 + \alpha = 5,$$since for 
$0 \leq b \leq 1$ one has from \eqref{eq:alfa} that $\alpha =3$; then one concludes verbatim as in the proof of {\bf Case (2)}.  In particular, when $b=0$ we recall that, by the discussion for the case {\bf Cases}-$(iii)_1$ every bundle $\cF_N$ (equivalently $\cF_N'$) turns out to be a pull-back from  $\FF_0$ so, with same arguments as in the previous case we may conclude that $\mathcal M_1$ is  generically smooth, rational of dimension $5$ in this case. The rest of the statement can be proved exactly as in the previous case above.

\medskip

If, otherwise, we are in case $(1_b)$, namely $b \geqslant 2$, one has $h^2(\cF_N\otimes \cF_N^{\vee}) = \delta = b-1 \geqslant 1$. On the other hand, because in this case  $\alpha = b + 2$ and $\delta = b - 1$, from \eqref{eq:chi2}, one gets$$ \chi(\cF_N\otimes \cF_N^{\vee}) =-4.$$Since $\cF_N$ is simple, with $h^3(\cF_N\otimes \cF_N^{\vee}) = 0$, as it follows from previous computations and from \eqref{extension2tensorFdual}, one has that 
$$1 - \chi(\cF_2\otimes \cF_2^{\vee}) = h^1(\cF_2\otimes \cF_2^{\vee}) - h^2(\cF_2\otimes \cF_2^{\vee}) = 5.$$Therefore, if the corresponding modular component $\mathcal M_1$ contains stable points, one deduces that $\dim(\mathcal M_1) \geqslant 5$ from \cite[Corollary 4.5.2]{HL}.

If this is not the case, then $\mathcal M_1$ reduces to a point. Indeed, in this latter case, it means that the bundles $\cF_N$ arising form extensions do not deform in a smooth modular family of bigger dimension. Since all these bundles are simple, so indecomposable, and  strictly semistable, the natural modular map obtained via GIT-quotient contracts them to the unique {\em polystable} representative of the associated $S$-equivalence class of such bundles, which is simply given by $[N \oplus N^U]$. 

The rest of the statement can be proved exactly as in the previous case above.

\bigskip

\noindent
{\bf Cases (3)}: this case deals with extensions $(ii)_3$, when $b>0=a$ and $c \geq b+1$, and $(iii)_3$, when $a =  b = 0$ and $c \geq 1$, as in \S\;\ref{extensionsrk2}, dealing with line bundle pair $(L,N)$,   which is a representative of the equivalence classes   $(3)\sim (6) \sim (12) \sim (15)$  as explained above.

Looking at cohomological computations in  \S\;\ref{extensionsrk2} in {\bf Cases} $(ii)_3$ and $(iii)_3$ one observes that, in each case,  ${\rm ext}^1(N,L)=0$, whereas ${\rm ext}^1(L,N )=2c-b-2$ which is positive unless $b=0$ and $c=1$. Therefore, when $b=0$ and $c=1$, we conclude that we only get a direct sum of line bundles. 

When otherwise $b=0$ and $c \geq   2$ or  $b>0$, performing similar computations as in {\bf Case (2)} above, applied to any non-trivial bundle 
$\cF_{N,L} \in {\rm Ext}^1(L,N)$ which is the only positive-dimensional extension space, one gets once again that 
$h^0(\cF_{N,L}\otimes \cF_{N,L}^{\vee} )= 1$,  
$h^j(\cF_{N,L}\otimes \cF_{N,L}^{\vee} ) = 0$, for $2 \leqslant j \leqslant 3$, and moreover one can also compute $\chi(\cF_{N,L}\otimes \cF_{N,L}^{\vee} )$. It turns out that 
$$h^1(\cF_{N,L}\otimes \cF_{N,L}^{\vee} ) = 1 - \chi(\cF_{N,L}\otimes \cF_{N,L}^{\vee} )$$coincides 
with ${\rm ext}^1(N,L)-1 =2c-b-3 = \dim(\Pp({\rm Ext}^1(L,N)))$. 

If we  simply denote by $\mathbb P:= \Pp({\rm Ext}^1(L,N))$, this means that any $[\cF_{N,L}] \in \mathbb P$ does not deform 
in a smooth modular family of bigger dimension.  Since all the corresponding bundles $\cF_{N,L}$ are simple, so indecomposable, and  strictly semistable, the natural modular map obtained via GIT-quotient contracts the whole $\mathbb P$ to the unique {\em polystable} representative of the associated $S$-equivalence class of such bundles, which is simply given by $[L\oplus N]$.

\bigskip

\noindent
{\bf Cases (4), (8)}: the proof is similar to that for  {\bf Case (3)}.

\bigskip

Finally, to see that all {\bf Cases (k)}, for $k \neq 1,2$ are given by {\em non-special} $h$-Ulrich bundles, one simply observe that $K_{X} + 4 h =\Oc_0(2,2,4c-b-2)$, which 
does not coincide with the first Chern class $c_1$ of the bundles in question. 
\end{proof}

When otherwise $a,b >0$, we are dealing with only {\bf Case (i)} as in \S\,\ref{extensionsrk2}. Then,  up to replacing the role between $a$ and $b$, we deal with condition \eqref{eq:restriction} and we get  the following result.

 \begin{theo}\label{thm:rk2case2}  Let $b \geq a > 0$ and $c \geq a+b+1$ be any integers. Let $(X, \mathcal O_X(1)) \cong (\mathbb P_{\FF_a}(\mathcal E_a), h)$ be a threefold  scroll over $\FF_a$ in $\Pp^n$ as in \eqref{eq:Xe} and \eqref{eq:nde} and let $\varphi: X \to \FF_a$ denote the scroll map. 
 
Then the moduli space of rank-two $h$-Ulrich vector bundles $\cU$  on $X$  with Chern classes $(c_1;\,c_2) := \left(c_1(\cU); c_2(\cU)\right)$ as:
\begin{equation}\label{eq:chern2e} 
(c_1;\,c_2) = \left(\mathcal O_a(2,2,4c-b-a-2)\, ;\;\; 
4\xi.C_0 + 2(2c-b-1) \xi.F + 2(2c-a-1)C_0.F \right)
\end{equation}is not empty, containing an irreducible component  $\mathcal M:= \mathcal M(2; c_1, c_2)$. Moreover,

\bigskip  

\noindent

\begin{itemize}
\item[$(j)$] if $a=b=1$, then $\mathcal M$ is generically smooth, of dimension 
\begin{equation}\label{eq:dimM1e}
\dim (\mathcal M) = 5
\end{equation}and its general point  $[\cU] \in \mathcal M$ corresponds to a slope-stable, rank-two $h$-Ulrich vector bundle $\cU$ which is {\em special} and s.t.
\begin{equation}\label{eq:cohomologyU2e}
h^0(\cU \otimes \cU^{\vee}) = 1, \;\; h^1(\cU \otimes \cU^{\vee}) =  \dim (\mathcal M)\,\; {\rm and} \;\, h^j(\cU \otimes \cU^{\vee}) = 0, \; 2 \leqslant j \leqslant 3,
\end{equation}of $h$-slope $\mu(\cU) =8c-10$. 

Furthermore, deformations of 
Ulrich bundles $\cF_N$ and $\cF'_N$ as in \eqref{extension2} and \eqref{extension2'} belong to the component $\mathcal M$;  

\medskip

\item[$(jj)$] when $b> a=1$ then, if $\mathcal M$ contains stable points, one has $\dim(\mathcal M) \geqslant 5$ and its general point $[\mathcal U]$ corresponds to a slope-stable, $h$-Ulrich rank-two vector bundle $\mathcal U$ on $X$ which is {\em special}, of $h$-slope $\mu(\cU) = 4(c-2b)-6$; otherwise,  the component $\mathcal M$ is a point, consisting of a unique $S$-equivalence representative determined by a polystable bundle which is $h$-Ulrich w.r.t. $\xi$ and {\em special}.

\medskip

\item[$(jjj)$] if $b \geq a\geq 2$  then, if $\mathcal M$ contains stable points, one has $\dim(\mathcal M) \geqslant 5$ and its general point $[\mathcal U]$ corresponds to a slope-stable, $h$-Ulrich rank-two vector bundle $\mathcal U$ on $X$ which is {\em special}, of $h$-slope $\mu(\cU) = 4(2c-b-a)-2$;  otherwise,  the component $\mathcal M$ is a point, consisting of a unique $S$-equivalence representative determined by a polystable bundle which is $h$-Ulrich and {\em special}.
\end{itemize}
\end{theo}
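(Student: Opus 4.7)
The plan is to run the argument of Case $(1)$ of Theorem \ref{thm:rk2case1} essentially verbatim, using only the pair $(N, N^U)$, which by Theorem \ref{prop:LineB}-(i) are the only two $h$-Ulrich line bundles on $X$ under $b \geq a > 0$. First I would build the rank-two extensions $0 \to N^U \to \cF_N \to N \to 0$ and $0 \to N \to \cF_N' \to N^U \to 0$; the two ambient spaces have dimensions $a + 2$ and $b + 2$ by \eqref{extN-NU} and \eqref{extNU-N}, both $\geq 3$, so non-trivial extensions exist. Ulrichness of $\cF_N$ is automatic (an extension of Ulrich line bundles), speciality follows from the direct check $c_1(\cF_N) = \Oc_a(2, 2, 4c - a - b - 2) = K_X + 4 h$ via \eqref{formulaKX}, and since $N \not\cong N^U$ share the $h$-slope $\mu = 4(2c - a - b) - 2$ as in \eqref{slope}, \cite[Lem.\;4.2]{c-h-g-s} forces $\cF_N$ to be simple.

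The decisive step is the computation of $\chi(\cF_N \otimes \cF_N^\vee)$, which I would carry out in parallel with \eqref{extension2tensorFdual}--\eqref{extension2dualL1}. Tensoring \eqref{extension2} by $\cF_N^\vee$ and using the dualized extension yields
\[
\chi(\cF_N \otimes \cF_N^\vee) = 2 + \chi(X, N^U - N) + \chi(X, N - N^U).
\]
Each summand I would evaluate through Leray for the scroll map $\varphi : X \to \FF_a$, using $\varphi_* \Oc_X(2) = \mathrm{Sym}^2 \E_a$ and $R^1 \varphi_* \Oc_X(-2) = (\det \E_a)^{-1} = \Oc_{\FF_a}(0, b)$, together with Serre duality and Riemann--Roch on $\FF_a$. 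The resulting line-bundle cohomologies are $h^1(X, N^U - N) = a+2$, $h^2(X, N^U - N) = \max(a - 1, 0)$, $h^1(X, N - N^U) = b + 2$, and
\[
\delta := h^2(X, N - N^U) = h^0\bigl(\FF_a, \Oc_{\FF_a}(0, b - 2)\bigr) = \max(b - 1, 0),
\]
so $\chi(X, N^U - N) = \chi(X, N - N^U) = -3$, and hence $\chi(\cF_N \otimes \cF_N^\vee) = -4$ independently of $a, b$. Combined with $h^0 = 1$ from simplicity and $h^3 = 0$ (negative-twist vanishing via Serre duality on $X$), this gives $h^2(\cF_N \otimes \cF_N^\vee) = \delta$ and $h^1(\cF_N \otimes \cF_N^\vee) = 5 + \delta$.

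The three parts of the theorem then follow by case analysis on $\delta$. In $(j)$, $a = b = 1$ forces $\delta = 0$, so by \cite[Prop.\;2.10]{c-h-g-s} the simple bundle $\cF_N$ sits in a smooth modular family of dimension $1 - \chi = 5$; since $\dim \Pp(\Ext^1(N, N^U)) = a+1 = 2$ and $\dim \Pp(\Ext^1(N^U, N)) = b+1 = 2$ are both strictly less than $5$, a general member of this family is not an extension of Ulrich line bundles, so by Theorem \ref{thm:stab}-(b) it is slope-stable. The Quot-scheme argument of \cite[Claim 3.3]{fa-fl2} applied to $N \oplus N^U$ then pools $\cF_N$ and $\cF_N'$ into a single irreducible component whose GIT image is the desired $\mathcal M$ of dimension $5$, and the remaining assertions of $(j)$ follow by semi-continuity. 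In $(jj)$ and $(jjj)$, $b \geq 2$ gives $\delta \geq 1$, so \cite[Prop.\;2.10]{c-h-g-s} is no longer available, and I would dichotomise exactly as in Case $(1_b)$ of Theorem \ref{thm:rk2case1}: either $\mathcal M$ admits a stable point, in which case $\dim \mathcal M \geq 1 - \chi = 5$ by \cite[Cor.\;4.5.2]{HL} and the general point has the stated slope-stable, special, $h$-Ulrich features, or else every non-trivial extension $\cF_N$ is contracted under GIT to the unique polystable $S$-equivalence representative $[N \oplus N^U]$ and $\mathcal M$ collapses to that single point.

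The hard part will be the genuine dichotomy in $(jj)$--$(jjj)$: when $\delta > 0$ the obstruction space $H^2(\cF_N \otimes \cF_N^\vee)$ is non-zero, and the present method does not settle whether the semistable locus actually spreads to a positive-dimensional family, so both alternatives have to remain in the statement. A minor technical nuisance is the projection-formula bookkeeping on $\FF_a$ involving $\mathrm{Sym}^2 \E_a$ and $(\det \E_a)^{-1}$; in the $a = 0$ case of Theorem \ref{thm:rk2case1} this could be absorbed by a direct application of Lemma \ref{lem:computationsjoan}, but that lemma requires $0 \leq \alpha_1 \leq \alpha_2$ and is not directly applicable to $N - N^U$ and $N^U - N$ when $a > 0$, so the higher direct images must be computed by hand to secure the uniform identity $\chi(\cF_N \otimes \cF_N^\vee) = -4$.
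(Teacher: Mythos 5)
Your proposal follows essentially the same route as the paper: the same extensions of $N$ by $N^U$, the same cohomology groups $\alpha,\beta,\gamma,\delta$ computed via Leray and Serre duality, the uniform value $\chi(\cF_N\otimes\cF_N^{\vee})=-4$, and the same trichotomy driven by the (non)vanishing of the obstruction space. The only small imprecision is the blanket claim $h^2(\cF_N\otimes\cF_N^{\vee})=\delta$: the long exact sequences only give $\delta\leq h^2(\cF_N\otimes\cF_N^{\vee})\leq \delta+\gamma$, and for $a\geq 2$ one has $\gamma=a-1>0$, but since all that is used is $h^2=0$ when $a=b=1$ and $h^2\neq 0$ when $b\geq 2$ (together with $h^1-h^2=5$), this does not affect the argument.
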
 
 
\begin{proof} 
The proof is similar to that in Theorem \ref{thm:rk2case1}; the main difference is given by the cohomological behavior of the bundles involved. 

Indeed taking $N$ and $N^U$, for any $b \geq a \geq 1$, similar computations as for {\bf Case} $(ii)_1-(iii)_1$ in Theorem \ref{thm:rk2case1} show that $$h^0(X, N^U \otimes N^{\vee}) = h^3(X, N^U\otimes L^{\vee})= 0$$and
 \begin{equation}\label{eq:betae}
\beta : =h^1(X, N^U \otimes N^{\vee}) =a+2,
\end{equation}whereas
\begin{eqnarray}\label{eq:gammae}
\gamma : =h^2(X, N \otimes (N^U)^{\vee}) =\left\{\aligned 
 a-1 & \qquad \mbox{if $a \geqslant 2$}\\ 
0 & \qquad \mbox{if $a=1$}\\ 
\endaligned\right.
\end{eqnarray} On the contrary, one has 
$$h^0(X, N \otimes (N^U)^{\vee}) = h^3(X, N \otimes (N^U)^{\vee})= 0,$$  
\begin{eqnarray}\label{eq:alfae}
\alpha : =h^1(X, N \otimes (N^U)^{\vee}) = b +2;
\end{eqnarray} furthermore, by Leray's isomorphism and Serre duality, one gets also $H^2(X, N \otimes (N^U)^{\vee}) \simeq H^0(\FF_a, \varphi^*(\Oc_{\FF_a}(0,b-2)))$, so  
\begin{eqnarray}\label{eq:deltae}
\delta :=h^2(X, N \otimes (N^U)^{\vee})= \left\{\aligned 
 b-1& \qquad \mbox{if $b\geq 2$}\\ 
0 & \qquad \mbox{if $b=1$}\\ 
\endaligned\right.
\end{eqnarray} Therefore, if $a=1$, one has $\beta = 3$, $\gamma =0$ and, once again from \eqref{extension2tensorFdual}, one deduces that $h^3(\cF_N \otimes \cF_N^{\vee}) =0$ and $\chi(\cF_N \otimes \cF_N^{\vee}) = \delta - \alpha -1$.

Thus, if we are in case $(j)$, namely $a=b=1$, with therefore $c \geq 2$, we have $\delta = 0$, which implies $h^2( \cF_N \otimes \cF_N^{\vee}) =0$ and $\chi(\cF_N \otimes \cF_N^{\vee}) = -\alpha -1 = -4$; one therefore concludes as in Theorem \ref{thm:rk2case1}-$(1_a)$.

If otherwise we are in case $(jj)$, i.e. $b> a=1$, one has that 
$h^2( \cF_N \otimes \cF_N^{\vee}) = \delta = b-1 \geq 1$ and $\alpha = b+2$. Therefore, from the simplicity of $\cF_N$ and from the fact that $h^3(\cF_N \otimes \cF_N^{\vee}) =0$, then $1 - \chi(\cF_N \otimes \cF_N^{\vee}) = h^1(\cF_N\otimes \cF_N^{\vee}) - h^2(\cF_N \otimes \cF_N^{\vee})) = 5$ so, if $\mathcal M$ contains a stable point as in the assumption, one concludes as in Theorem \ref{thm:rk2case1}-$(1_b)-(*)$; otherwise $\mathcal M = \{[N \oplus N^U]\}$ once again as in Theorem \ref{thm:rk2case1}-$(1_b)-(**)$.

If we are finally in case $(jjj)$, namely $b \geq a \geqslant 2$, then from \eqref{extension2tensorFdual} one gets $h^3(\cF_N \otimes \cF_N^{\vee}) =0$ and $$\chi(\cF_N \otimes \cF_N^{\vee}) = 2-\alpha -\beta + \delta + \gamma,$$which in any case gives $\chi(\cF_N \otimes \cF_N^{\vee}) = -4$.

 Since $b \geq 2$, with $c \geq a+b+1$, then $\delta \neq 0$,  which therefore implies $h^2(\cF_N \otimes \cF_N^{\vee}) \neq 0$, one concludes as in case $(jj)$ above.  
 \end{proof}

\begin{rem}\label{remFL26feb2025} \noindent
{\normalfont Notice that in Theorem \ref{thm:rk2case1}, {\bf Case} $(1_b)$, for $a=0$,  and in Theorem \ref{thm:rk2case2}-$(jj)$ and $(jjj)$, for $b> a \geqslant 1$, we need to assume the existence of a stable point in the component $\mathcal M$ in order to get some lower bound on its dimension. This assumption was not needed e.g. for bundles studied in \cite{fa-fl2}, as all extensions therein turned out to certainly correspond to {\em unobstructed} points  of the corresponding smooth modular family. 

More precisely, in \cite[Theorem 3.1]{fa-fl2}, dealing with rank-two $h$-Ulrich bundles with $a=0$, we used on $X$ deformation arguments on extensions of Ulrich line bundles, whose existence was proved in \cite[Theorem 2.1]{fa-fl2}, along with sufficient cohomological conditions that guarantee the existence of a smooth modular family for such rank-two extensions; as a by-product, we concluded that on $X$ there were rank-two $h$-Ulrich stable bundles. Furthermore, in  \cite[Theorem 5.1-(b)]{fa-fl2}, we also showed that such bundles are all obtained via ``pull-backs" of suitable stable $c_1(\mathcal E)$-Ulrich bundle on the base $\mathbb F_0$, in the sense of Theorem \ref{pullback} and  Remark \ref{cl:pullbackrk2}.  

Instead, when $a>0$, since there are no Ulrich line bundles on $X$ from \cite[Theorem 2.1]{fa-fl2}, we deduced the existence of rank-two stable $h$-Ulrich bundles on $X$ as ``pull-backs" of rank-two, stable and $c_1(\mathcal E)$-Ulrich vector bundles  as in \cite[Theorem 3.4 and Remark 3.7]{a-c-mr} on the base $\mathbb F_{a}$, $a>0$.

In the present paper,  in  Theorem \ref{thm:rk2case1}-$(1_b)-(*)$, for $a=0$, as well as in Theorem \ref{thm:rk2case2}-$(jj)$ and $(jjj)$ above, for $b > a \geqslant 1$, all the constructed rank-two Ulrich vector bundles do not arise as similar {\em ``pull-backs" or ``pull-backs and twists"} of bundles as observed in {\bf Case} $(i)$ 

Moreover, as already observed in the proofs therein, the sufficient cohomological condition $h^2(\mathcal U \otimes \mathcal U^{\vee})=0$ ensuring the existence of smooth modular families does not hold. 
}
\end{rem}

 \smallskip

\subsection{Rank-$2$ Ulrich bundles not arising from extensions}\label{Ulrichrk2noextension} Here we deal with modular components of rank-two $h$-Ulrich vector bundles which do not arise from line-bundle extensions as in \S\;\ref{extensionsrk2}. 

To do so, we focus on the case $a=b=0$, with $c \geq 1$ any integer, namely with notation as in  Remark \ref{rem:1astratto} $(X, h) \cong (\Pp^1 \times \Pp^1 \times \Pp^1,\Oc(1,1,c))$  is the {\em Segre-Veronese threefold} $X \subset \Pp^{4c+3}$ of degree $d= 6c$ and sectional genus $g = 2c-1$. As therein $\pi_i$ stands for the projection of $\Pp^1 \times \Pp^1 \times \Pp^1$ onto the $i-th$ $\Pp^1$-factor and $h_i:=\pi_i^*(\mathcal O_{\Pp^1}(1))$, $1 \leq i \leq 3$.

In order to show the existence of {\em special} rank-two Ulrich vector bundles with different second Chern class  w.r.t. those determined in \S\;\ref{extensionsrk2}, we consider the {\em instanton bundles} defined in \cite{am}. The Chow ring $A(\Pp^1 \times \Pp^1 \times \Pp^1)$ is isomorphic to $A(\mathbb P^1) \otimes A(\mathbb P^1) \otimes A(\mathbb P^1)$, so we have
$$A(\Pp^1 \times \Pp^1 \times \Pp^1) \cong \mathbb Z[h_1, h_2, h_3]/(h_1^2, h_2^2, h_3^2).$$ We may identify $A^1(\Pp^1 \times \Pp^1 \times \Pp^1)\cong \mathbb Z^{\oplus 3}$ by $\alpha_1h_1+\alpha_2h_2+\alpha_3h_3 \mapsto (\alpha_1, \alpha_2, \alpha_3)$. Similarly we have $A^2(\Pp^1 \times \Pp^1 \times \Pp^1) \cong \mathbb Z^{\oplus 3}$ by $\beta_1e_1+\beta_2e_2+\beta_3e_3\mapsto (\beta_1, \beta_2, \beta_3)$ where $$e_1:=h_2.h_3, \; e_2:=h_1.h_3, \; e_3:=h_1.h_2$$and $A^3(\Pp^1 \times \Pp^1 \times \Pp^1) \cong \mathbb Z$ by $\gamma\,h_1.h_2.h_3 \mapsto \gamma$.

\begin{dfntn}
A $\mu$-semistable vector bundle $E$ on $\Pp^1 \times \Pp^1 \times \Pp^1$ is called an {\em instanton bundle} of charge $k$ if and only if $c_1(E)=0$,
\[
H^0(E)=H^1(E(-h))=0
\]
and $c_2(E)=k_1e_1+k_2e_2+k_3e_3$ with $k_1+k_2+k_3=k$.
\end{dfntn}
Recall that, from \cite[Theorem\;2.7]{am},  if $E$ is a charge-$k$ instanton bundle  with $c_2(E)=k_1e_1+k_2e_2+k_3e_3$, then $E$ is the  cohomology of a {\em monad} of the form
\begin{equation}\label{monade}
0\rightarrow
\begin{matrix}
\cO^{k_3}(-h_1-h_2) \\
\oplus \\
\cO^{k_2}(-h_1-h_3) \\
\oplus \\
\cO^{k_1}(-h_2-h_3)
\end{matrix}
\rightarrow
\begin{matrix}
\cO^{k_2+k_3}(-h_1) \\
\oplus \\
\cO^{k_1+k_3}(-h_2) \\
\oplus\\
\cO^{k_1+k_2}(-h_3)
\end{matrix}
\rightarrow
\cO^{k-2}
\rightarrow
0
\end{equation}and, conversely, that any $\mu$-semistable bundle defined as the cohomology of such  a monad is a  charge-$k$ instanton bundle. 

Now we want to find the possible values of $k_1,k_2,k_3$ such that $E(h_1+h_2+(2c-1)h_3)$ is $\Oc(1,1,c)$-Ulrich on $\Pp^1 \times \Pp^1 \times \Pp^1$, so we must have $$h^i(E((c-1)h_3))=h^i(E(-h_1-h_2-h_3))=h^i(E(-2h_1-2h_2+(-c-1)h_3))=0$$ for any $i \geq 0$. Notice that $h^i(E(-h_1-h_2-h_3)=h^i(E(-h))=0$, for any $i$, by definition of instanton bundle and by Serre duality. Moreover, once again by Serre duality, we have \linebreak $h^i(E(-2h_1-2h_2+(-c-1)h_3))=h^{3-i}(E((c-1)h_3))$. Let us consider (\ref{monade}) tensored by $\cO((c-1)h_3)$
$$
 0\rightarrow
\begin{matrix}
\cO^{k_3}(-h_1-h_2+(c-1)h_3) \\
\oplus \\
\cO^{k_2}(-h_1+(c-2)h_3) \\
\oplus \\
\cO^{k_1}(-h_2+(c-2)h_3)
\end{matrix}
\rightarrow
\begin{matrix}
\cO^{k_2+k_3}(-h_1+(c-1)h_3) \\
\oplus \\
\cO^{k_1+k_3}(-h_2+(c-1)h_3) \\
\oplus\\
\cO^{k_1+k_2}((c-2)h_3)
\end{matrix}
\rightarrow
\cO^{k-2}((c-1)h_3)
\rightarrow
0.
 $$
The homology of this monad is $E((c-1)h_3)$ and it is easy to compute $h^2(E((c-1)h_3))=h^3(E((c-1)h_3))=0$. In order to have also $h^0(E((c-1)h_3))=h^1(E((c-1)h_3))=0$ we must have that the map
$$H^0(\cO^{k_1+k_2}((c-2)h_3))
\rightarrow
H^0(\cO^{k-2}((c-1)h_3))$$ is an isomorphism. In particular
$$h^0(\cO^{k_1+k_2}((c-2)h_3))
=
h^0(\cO^{k-2}((c-1)h_3)),$$ hence
$$(k_1+k_2)(c-1)=(k-2)c\Leftrightarrow k_1+k_2+ck_3-2c=0.$$
So when $k_1+k_2+ck_3-2c=0$ we have $h^0(E((c-1)h_3))=h^1(E((c-1)h_3))$. We have three cases:
\begin{itemize}
    \item[($\alpha$)] $c_2(E)=(0,0,2)$, 
    \item[($\beta$)] $c_2(E)=(k_1,k_2,1)$, with $k_1+k_2=c$. 
     \item[($\gamma $)] $c_2(E)=(k_1,k_2,0)$, with $k_1+k_2=2c$. 
\end{itemize}
Case $(\alpha$) has been studied in {\bf Case}  $(iii)_1$  and in Theorem 
\ref{thm:rk2case1}, {\bf Case 1}. 

Let us consider now case ($\beta$): we want to show that the bundles constructed by induction in \cite[Theorem\;4.1]{am} are s.t. $h^0(E((c-1)h_3))=0$. To do so the induction starts with an  Ulrich vector bundle $\mathcal U$ on $\Pp^1 \times \Pp^1 \times \Pp^1$, (see \cite[Theorem\;A]{cas-fae-ma}) with  $c_2(\mathcal U)=(3,2,3)$ and $c_2(\mathcal U(-h_1-h_2-h_3))=(1,0,1)$. Let $E :=\mathcal U(-h_1-h_2-h_3)$. From \eqref{monade} with $k_2=0, k_1,k_3=1$ and $k=2$ we get $h^0(E((c-1)h_3))=h^0(\cO((c-2)h_3)))=c-1$. Now we need the following:

\begin{lemma}\label{lem:4.10} Let us consider a charge-$k$ instanton bundle $E$ on $Y= \Pp^1 \times \Pp^1 \times \Pp^1$ with $c_2(E)=k_1e_1+k_2e_2+k_3e_3$. Suppose that $E_{|_{l}}=\cO_{l}^{\oplus 2}$, with $l$ is a general line of either the first family $e_1$ or of the second family $e_2$ on $Y$.

Consider the short exact sequence
\begin{equation}\label{ses1}
0\rightarrow G \rightarrow E \rightarrow \cO_{l}\rightarrow 0
\end{equation}  defined by the fact that $E_{|_{l}}=\cO_{l}^{\oplus 2}$,  where $G$ is a torsion free sheaf which is not locally free  (cf. Step\;1 in the proof of \cite[Theorem\;4.1]{am}). If $h^0(E((c-1)h_3))\not=0$, then one has
 $$h^0(G((c-1)h_3))=h^0(E((c-1)h_3))-1, \;\; \mbox{for any integer} \; c>1.$$

\end{lemma}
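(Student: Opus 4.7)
The whole argument hinges on a single numerical observation: for any line $l$ in either of the families $e_1 = h_2.h_3$ or $e_2 = h_1.h_3$ one has $l.h_3 = 0$ (because $h_3^2 = 0$ in $A(Y)$), so that $\mathcal O_Y((c-1)h_3)|_l \cong \mathcal O_l$ and, by the triviality hypothesis, also $E((c-1)h_3)|_l \cong E|_l \cong \mathcal O_l^{\oplus 2}$. Tensoring the defining sequence \eqref{ses1} by $\mathcal O_Y((c-1)h_3)$ and passing to the long exact sequence in cohomology, one obtains
\[
0 \to H^0(G((c-1)h_3)) \to H^0(E((c-1)h_3)) \xrightarrow{r} H^0(\mathcal O_l) \to H^1(G((c-1)h_3)) \to \cdots,
\]
with $H^0(\mathcal O_l) \cong \mathbb C$. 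Thus the claimed equality $h^0(G((c-1)h_3)) = h^0(E((c-1)h_3)) - 1$ reduces to the surjectivity of the restriction map $r$.

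To establish surjectivity of $r$, I would factor it as the composition of the evaluation map
\[
R\colon H^0(E((c-1)h_3)) \longrightarrow H^0(E((c-1)h_3)|_l) = H^0(\mathcal O_l^{\oplus 2}) \cong \mathbb C^{\oplus 2}
\]
with the linear functional $\phi\colon \mathbb C^{\oplus 2} \to \mathbb C$ induced, on global sections over $l$, by the chosen surjection $E \twoheadrightarrow \mathcal O_l$ defining $G$. By hypothesis $H^0(E((c-1)h_3))$ contains a nonzero section $\sigma$, whose scheme-theoretic zero locus $Z(\sigma) \subsetneq Y$ is a proper closed subset. Since lines in either family $e_1$ or $e_2$ sweep out $Y$ through a $2$-parameter algebraic family, a general such $l$ satisfies $l \not\subset Z(\sigma)$, and consequently $R(\sigma) \in \mathbb C^{\oplus 2}$ is nonzero.

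Finally, the surjection $E \twoheadrightarrow \mathcal O_l$ used in Step $1$ of the proof of \cite[Theorem\,4.1]{am} is not canonical: it depends on the choice of a nonzero linear functional on the rank-two fiber $H^0(E|_l) \cong \mathbb C^{\oplus 2}$. I would pick this functional so that the induced $\phi$ does not annihilate the nonzero vector $R(\sigma)$; this is possible because the functionals vanishing on $R(\sigma)$ form a proper hyperplane in $(\mathbb C^{\oplus 2})^{\vee}$. With $G$ defined via this choice, $r(\sigma) = \phi(R(\sigma)) \neq 0$, so $r$ is surjective and the desired equality follows from the long exact sequence. The only point that requires care is compatibility between this choice of quotient and the independent choices governing the inductive construction of \cite[Theorem\,4.1]{am}: since both conditions are Zariski open in the $\mathbb P^1$ of admissible projections, a simultaneous choice can be made for generic $l$, and this is the main technical obstacle in bookkeeping rather than a deep difficulty.
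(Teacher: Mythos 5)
Your argument is correct, and it shares the paper's skeleton --- tensor \eqref{ses1} by $\cO_Y((c-1)h_3)$, observe that $h_3$ restricts trivially to a line of the family $e_1$ or $e_2$ so that the cokernel becomes $\cO_l$ with $h^0(\cO_l)=1$, and reduce everything to the surjectivity of the induced map $r\colon H^0(E((c-1)h_3))\to H^0(\cO_l)\cong\mathbb{C}$ --- but it diverges at the decisive step, in a way that is arguably an improvement. The paper proves surjectivity by contradiction: restricting \eqref{ses1} to $l$ it shows that $\alpha_l\colon H^0(E((c-1)h_3)|_l)\cong\mathbb{C}^{\oplus 2}\to H^0(\cO_l)$ is surjective, and then asserts that $r=0$ would force $\alpha_l=0$. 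That implication needs the image of the restriction map $H^0(E((c-1)h_3))\to\mathbb{C}^{\oplus 2}$ to meet the complement of $\ker\alpha_l$, which is exactly what is at stake (the image may well be the line $\ker\alpha_l$ for an unlucky choice of quotient), so the paper's contradiction step is silently relying on a genericity of the surjection $E\twoheadrightarrow\cO_l$. You make that genericity explicit: factoring $r=\phi\circ R$, using $h^0(E((c-1)h_3))\neq 0$ together with the fact that the lines of the family cover $Y$ to get $R(\sigma)\neq 0$ for general $l$, and then choosing $\phi$ in the $\Pp^1$ of admissible quotients off the hyperplane annihilating $R(\sigma)$. What your route buys is the correct precise statement --- the equality $h^0(G((c-1)h_3))=h^0(E((c-1)h_3))-1$ holds for a \emph{general} choice of the surjection onto $\cO_l$, not for every choice (when $\dim\mathrm{im}(R)=1$ there is exactly one bad functional up to scalar) --- and your closing remark on compatibility with the choices made in the induction of \cite[Theorem\;4.1]{am} is precisely the caveat needed for the iterated application of the lemma in cases $(\beta)$ and $(\gamma)$; since only the existence of some Ulrich instanton bundle is claimed there, a generic choice at each of the finitely many steps suffices.
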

\begin{proof}  Without loss of generality, we may assume that $l$ is a general line of the first family $e_1$.  Let us consider the sequence \eqref{ses1} tensored by $\cO_Y((c-1)h_3)$. By using  the resolution of $\cO_l$, where as above $l$ is considered to be a general line from the first family $e_1$, 
$$0\rightarrow \cO_Y(-h_2-h_3)\rightarrow \cO_Y(-h_2) \oplus \cO_Y(-h_3) \rightarrow \cO_Y \rightarrow \cO_{l} \rightarrow 0$$ tensored by $\cO_Y((c-1)h_3)$, we get $h^0(\cO_{l}\otimes\cO_Y((c-1)h_3))=1$

So we may assume that the map $$ \alpha: H^0(E((b-1)h_3)) \rightarrow H^0(\cO_{l}\otimes\cO_Y((c-1)h_3))$$ is surjective or that it is the zero-map. But if $\alpha =0$ also the restriction on $l$
$$ \alpha_l: H^0(E((c-1)h_3)\otimes\cO_l) \rightarrow H^0(\cO_{l}\otimes\cO_Y((c-1)h_3))$$ must be zero (we recall that $h^0(E((b-1)h_3))\not=0$). If we consider the sequence \eqref{ses1} tensored by $\cO_l$ we get the splitting sequence 
$$0\to\cO_l\to\cO_l^2\to\cO_l^2\to\cO_l\to 0 $$
If we tensor by $\cO_Y((b-1)h_3)$ we get again a splitting sequence and we can conclude that the map $\alpha_l$ is surjective and we obtain a contradiction.   
Then $\alpha$ must be surjective and the Lemma is proved.   
\end{proof}

Finally we notice that an instanton bundle $E$ with $c_2(E)=(k_1,k_1,1)$, and $k_1+k_2=c$, is constructed by induction in \cite[Theorem\;4.1]{am} from $U'$ (with $c_2(U')=(1,0,1)$) in $c-1$ steps. Moreover $E$ is obtained by deforming $G$ to a locally free sheaf. Thanks to Lemma \ref{lem:4.10} applied $c-1$ times we get
$$h^0(G((c-1)h_3))=h^0(E((c-1)h_3))=h^0(U'((c-1)h_3))-(c-1)=(c-1)-(c-1)=0$$
Then also $h^1(E((c-1)h_3))=0$ may conclude that $E(h_1+h_2+(2c-1)h_3)$ is $\Oc(1,1,c)$-Ulrich on $\Pp^1 \times \Pp^1 \times \Pp^1$ when $k_1+k_2+ck_3-2c=0$. 

Let us finally consider the case ($\gamma$): we want to show that the bundles constructed by induction in \cite[Theorem\;4.1]{am} satisfy $h^0(E((c-1)h_3))=0$. In this case the induction start with an  Ulrich bundle $U$ of $\Pp^1 \times \Pp^1 \times \Pp^1$, (see \cite{cas-fae-ma} Theorem A) with  $c_2(U)=(3,3,2)$ and $c_2(U(-h_1-h_2-h_3))=(1,1,0)$. Let $U'=U(-h_1-h_2-h_3)$. From \eqref{monade} with $k_2=0, k_1,k_3=1, k=2$ we get $h^0(E((c-1)h_3))=h^0(\cO^2((c-2)h_3)))=2c-2.$ 

Finally we notice that an instanton bundle $E$ with $c_2(E)=(k_1,k_1,0)$, and $k_1+k_2=2c$, is constructed by induction in \cite[Theorem\;4.1]{am} from $U'$  
(with $c_2(U')=(1,1,0)$) in $2c-2$ steps. Moreover, $E$ is obtained by deforming $G$ as in the above Lemma to a locally free sheaf. If we apply the above Lemma $2c-2$ times we get$$h^0(G((c-1)h_3))=h^0(E((c-1)h_3))=h^0(U'((c-1)h_3))-(2c-2)=(2c-2)-(2c-2)=0$$
Then also $h^1(E((c-1)h_3))=0$ and we may conclude that also $E(h_1+h_2+(2c-1)h_3)$ is $\Oc(1,1,c)$-Ulrich on $\Pp^1 \times \Pp^1 \times \Pp^1$ when $k_1+k_2+ck_3-2c=0$.

We summarize the above discussion in the following theorem:
\begin{theo}\label{thm:rk2noextensionmay25} Let  $c \geq 1$ and $k_1,k_2$ be non-negative integers. 
\begin{enumerate}
\item If $k_1+k_2=c$, there exists a $\mu$-stable instanton bundle $E$ with $c_2(E)=(k_1+4c-2)e_1+(k_2+4c-2)e_2+3e_3$ on $(\Pp^1 \times \Pp^1 \times \Pp^1,\Oc(1,1,c))$ such that
\[
{\rm ext}^1(E,E)=4(c+1)-3, \qquad {\rm ext}^2(E,E)={\rm ext}^3(E,E)=0
\]
and such that $E$ is generically trivial on lines.

In particular, inside the moduli space $M((k_1+4c-2)e_1+(k_2+4c-2)e_2+3e_3)$ of special Ulrich bundles with $c_2=(k_1+4c-2)e_1+(k_2+4c-2)e_2+3e_3$, there exists a generically smooth irreducible component of dimension $4(c+1)-3$.

\item 

If $k_1+k_2=2c$ there exists a $\mu$-stable instanton bundle $E$ with $c_2(E)=(k_1+4c-2)e_1+(k_2+4c-2)e_2+2e_3$ on $(\Pp^1 \times \Pp^1 \times \Pp^1,\Oc(1,1,c))$ such that
\[
{\rm ext}^1(E,E)=8b-3, \qquad {\rm ext}^2(E,E)={\rm ext}^3(E,E)=0
\]
and such that $E$ is generically trivial on lines.

In particular, inside the moduli space $M((k_1+4c-2)e_1+(k_2+4c-2)e_2+2e_3)$ of special Ulrich bundles with $c_2=(k_1+4c-2)e_1+(k_2+4c-2)e_2+2e_3$, there exists a generically smooth irreducible component of dimension $8c-3$.

\end{enumerate}
\end{theo}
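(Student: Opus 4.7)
The existence of a rank-two bundle $E$ on $\Pp^1\times\Pp^1\times\Pp^1$ whose twist $E(h_1+h_2+(2c-1)h_3)$ is $\Oc(1,1,c)$-Ulrich has already been obtained in the discussion preceding the statement, via cohomology computations on the monad \eqref{monade} and iterated application of Lemma \ref{lem:4.10}. What remains is to verify $\mu$-stability, generic triviality on lines, the Ext-vanishings ${\rm ext}^2(E,E)={\rm ext}^3(E,E)=0$, and the dimension count for ${\rm ext}^1(E,E)$; the moduli-theoretic consequences then follow from standard deformation theory.

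The natural framework is the inductive construction of \cite[Theorem\,4.1]{am}. In case (1) I would start from an Ulrich bundle $\mathcal{U}$ as in \cite[Theorem\,A]{cas-fae-ma} whose twist $\mathcal{U}(-h_1-h_2-h_3)$ has $c_2=e_1+e_3$, and in case (2) from one with $c_2=e_1+e_2$; both base cases are simple, $\mu$-stable and satisfy ${\rm ext}^2={\rm ext}^3=0$ by the main results of \cite{cas-fae-ma,cas-fae-ma2}. At the $n$-th inductive step one selects a general line $\ell$ in the appropriate ruling, forms the elementary transformation $0\to G\to E_n\to\Oc_\ell\to 0$, and deforms $G$ to a locally free sheaf $E_{n+1}$. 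Both $\mu$-stability and generic triviality on lines are open conditions in flat families and hence transfer to $E_{n+1}$, while Ulrichness of the twist $E_{n+1}(h_1+h_2+(2c-1)h_3)$ is exactly the content of the iterated application of Lemma \ref{lem:4.10} already carried out in the preparatory discussion.

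The key and most delicate step will be to propagate the vanishings ${\rm ext}^2(E_n,E_n)={\rm ext}^3(E_n,E_n)=0$ through the induction. Two complementary strategies are available. The first is to tensor \eqref{monade} with $E^\vee$ and exploit the resulting Beilinson-type hypercohomology spectral sequence, whose $E_1$-page reduces to direct sums of groups of the form $H^q(E(\alpha_1,\alpha_2,\alpha_3))$ computable by K\"unneth once the monad-induced filtration is unravelled. The second is semi-continuity applied to the flat deformation of $G$ to $E_{n+1}$: one compares ${\rm Ext}^i(E_{n+1},E_{n+1})$ with ${\rm Ext}^i(G,G)$, which in turn fits into long exact sequences relating it to ${\rm Ext}^i(E_n,E_n)$ and local cohomology supported along $\ell$, reducing the desired vanishings to those of the base case. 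The vanishing ${\rm ext}^3(E,E)=0$ follows from Serre duality, which identifies it with ${\rm Hom}(E,E(-2,-2,-2))^{\vee}$, combined with the simplicity of $E$ and an elementary slope comparison.

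Once these vanishings are established, Hirzebruch--Riemann--Roch applied to the known Chern classes, together with the identity ${\rm ext}^1(E,E)=1-\chi(E\otimes E^{\vee})$ (using ${\rm ext}^0=1$ by simplicity and ${\rm ext}^2={\rm ext}^3=0$), yields ${\rm ext}^1(E,E)=4(c+1)-3$ in case (1) and ${\rm ext}^1(E,E)=8c-3$ in case (2). Smoothness of the moduli space of Ulrich bundles at the point $[E(h_1+h_2+(2c-1)h_3)]$ and the claimed dimension then follow from \cite[Cor.\,4.5.2]{HL}, while the existence of a single irreducible component through this point is inherited from the irreducibility of the parameter space of monads \eqref{monade} modulo the action of the natural change-of-basis group, as in \cite{am}.
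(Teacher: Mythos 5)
Your proposal reaches the right conclusions but by a genuinely different, and much heavier, route than the paper. The paper's proof of this theorem is essentially a two-line reduction: it computes $c_1$ and $c_2$ of the twist $E(h_1+h_2+(2c-1)h_3)$ (checking in particular that $c_1 = 2h + \varphi^*\Oc_{\FF_0}(0,2c-2)$, so the bundle is special, and that $c_2$ lands on the values in the statement), and then invokes \cite[Theorem\;1.2]{am} wholesale for everything else --- existence of a $\mu$-stable instanton bundle of the given charge, generic triviality on lines, ${\rm ext}^1(E,E)=4k-3$, ${\rm ext}^2(E,E)={\rm ext}^3(E,E)=0$, and the generically smooth irreducible modular component of dimension $4k-3$. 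The only genuinely new input is the Ulrichness of the twist, which is established in the discussion preceding the theorem (cases $(\beta)$ and $(\gamma)$, via the monad \eqref{monade} and the iterated application of Lemma \ref{lem:4.10}), exactly as you note. By contrast, you propose to re-run the inductive elementary-transformation construction of \cite[Theorem\;4.1]{am} and to re-derive stability, triviality on lines and the Ext-vanishings from scratch. That is legitimate in principle, but it amounts to reproving the cited theorem, and the step you yourself identify as ``key and most delicate'' --- propagating ${\rm ext}^2(E_n,E_n)={\rm ext}^3(E_n,E_n)=0$ through the induction --- is left as a choice between two unexecuted strategies. Since this is precisely the content that the citation supplies for free, your plan as written has a hole exactly where the paper has none.

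Two smaller points. First, you never carry out the Chern-class computation $c_2(E(h_1+h_2+(2c-1)h_3)) = (k_1+4c-2)e_1+(k_2+4c-2)e_2+(k_3+2)e_3$, which is the one computation the paper's proof actually performs and which is needed to see that the instanton charges $(k_1,k_2,1)$ with $k_1+k_2=c$ (resp.\ $(k_1,k_2,0)$ with $k_1+k_2=2c$) produce exactly the second Chern classes in the statement. Second, your base case for part (1) should be the bundle with $c_2(\mathcal U(-h_1-h_2-h_3))=(1,0,1)$, i.e.\ $e_1+e_3$, which you do state correctly, but be aware that the transfer of $\mu$-stability from $E_n$ to the deformation $E_{n+1}$ of the non-locally-free sheaf $G$ is not a bare openness statement and is itself part of what \cite[Theorem\;4.1]{am} proves.
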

\begin{proof}
    Notice that if $E$ is an instanton bundle with $c_2(E)=k_1e_1+k_2e_2+k_3e_3$, then $c_1(E(h_1+h_2+(2c-1)h_3))=2h_1+2h_2+2(2c-1)h_3=2 h +\varphi^*\Oc_{\FF_0}(0
,2c-2)$ so $E$ is special and
$c_2(E(h_1+h_2+(2c-1)h_3))=c_2(E)+c_1(E)*(h_1+h_2+(2c-1)h_3)+(h_1+h_2+(2c-1)h_3)^2=k_1e_1+k_2e_2+k_3e_3+(4c-2)e_1+(4c-2)e_2+2e_3=(k_1+4c-2)e_1+(k_2+4c-2)e_2+(k_3+2)e_3.$\\
We apply \cite[Theorem\;1.2]{am} to the instanton bundles of the case $(\beta)$.  By the above discussion $E(h_1+h_2+(2c-1)h_3)$ is Ulrich and, since $4k-3=4(k_1+k_2+1)-3=4(c+1)-3$, we obtain $(1)$.\\
We apply \cite[Theorem\;1.2]{am} to the instanton bundles of the case $(\gamma)$.  By the above discussion $E(h_1+h_2+(2c-1)h_3)$ is Ulrich and, since $4k-3=4(k_1+k_2)-3=8c-3$, we obtain $(2)$.
\end{proof}
\begin{rem}\label{rem:noextnopullbackmay} {\normalfont In Theorem \ref{thm:rk2noextensionmay25}, the cases $k_1=2c, k_2=0$ and $k_1=0, k_2=2c$ in $(2)$ are included in Theorem \ref{thm:rk2case1}. All the remaning cases, when $c>1$ are not included, so they are not arising from extensions of line bundles.  Moreover since $c_2(E)=(k_1,k_2,1)$, with $k_1+k_2=c$ or
     $c_2(E)=(k_1,k_2,0)$, with $k_1+k_2=2c$, $E$ cannot be a pullback from any factor $\Pp^1 \times \Pp^1$, by  Remark  \ref{cl:pullbackrk2} 
}
\end{rem}

\section{On some higher rank Ulrich bundles}\label{S:higher} In this section we will construct higher-rank, slope-stable, $h$-Ulrich vector bundles, under the  assumption $0 \leq a \leq b \leq 1$, with $c \geq a+b+1$ as in \eqref{eq:2.may3}, using both {\em iterative extensions}, by means of the two $h$-Ulrich line bundles
\begin{equation}\label{eq:Mi}
{\scriptstyle N =2\xi + (2c-a-1) F = \mathcal O(2,0,2c-1) \; \;\;  \mbox{and its Ulrich dual} \;\; \;
N^U = 2C_0 + (2c-b-1)F = \mathcal O(0,2,2c-b-1)} 
\end{equation} as in  Theorem \ref{prop:LineB} and {\em deformations} of 
vector-bundle extensions in suitable (flat and irreducible) modular families, generalizing the strategy used in  \S\;\ref{extensionsrk2} to construct rank-two $h$-Ulrich bundles via extensions of such line bundles. 

In order to perform iterative constructions, we first ease notation setting 
$${\mathscr N}_1:= N^U \;\;\;\; {\rm and}\;\;\;\; {\mathscr N}_2:= N.$$From Cases 
either $(i)$ or $(ii)_1$ or $(iii)_1$ as in \S\,\ref{Ulrichrk2extension}, with $0 \leq a\leq b \leq 1$, we recall that 
\begin{eqnarray}\label{eq:dimextMi}
\qquad {\rm ext}^1 ({\mathscr N}_2, {\mathscr N}_1))=h^1({\mathscr N}_1 - {\mathscr N}_2) = 3 = h^1( {\mathscr N}_2-  {\mathscr N}_1) = {\rm ext}^1 ({\mathscr N}_1, {\mathscr N}_2)).
 \end{eqnarray}After that, we set $\cG_1 :=  {\mathscr N}_1$. From \eqref{eq:dimextMi} the general element of ${\rm Ext}^1 ( {\mathscr N}_2, \cG_1) = {\rm Ext}^1 ( {\mathscr N}_2,  {\mathscr N}_1)$ is a non-splitting extension 
\begin{equation}\label{eq:1r1}
0 \to \cG_1= {\mathscr N}_1 \to \cG_2 \to  {\mathscr N}_2 \to 0,
\end{equation}
where $\cG_2:= \cF_N$, as in \eqref{extension2}, is a rank-two, simple, $h$-Ulrich vector bundle on $X$ with  \begin{eqnarray*}
c_1(\cG_2) &= &c_1({\mathscr N}_1) + c_1({\mathscr N}_2) = 2 \xi + 2 C_0 + (4c-b-a-2)F = 
\mathcal O(2,2,4c-b-a-2), \; \mbox{and} \\ c_2(\cG_2)&=&c_1({\mathscr N}_1)\cdot c_1({\mathscr N}_2) = 4 \xi.C_0 + 2(2c-b-1) \xi.F + (2(2c-a-1)C_0.F.
\end{eqnarray*}

 With a small abuse of notation, we will identify the extension \eqref{eq:1r1} with the corresponding rank-two vector bundle $\cG_2$, therefore we will state that $[\cG_2] \in {\rm Ext}^1 ({\mathscr N}_2, \cG_1)$ is a {\em general element} of such an extension space.

If, in the next step,  we considered further extensions ${\rm Ext}^1 ({\mathscr N}_2, \cG_2)$, namely with $ {\mathscr N}_2$ fixed once and for all as the right-most-side member of the exact sequences giving rise to such extensions, it is easy to see via standard coboundary maps that,  after finitely many steps,  we would get ${\rm Ext}^1 ({\mathscr N}_2, \cG_r) = \{0\}$, namely $\cG_{r+1} = {\mathscr N}_2 \oplus \cG_r$, for any $r \geqslant  r_0$, for some rank $r_0$. 
To avoid this fact, similarly as in \cite[\S\;4]{cfk1}, we proceed by taking {\em alternating extensions}, namely 
 \[0 \to \cG_2 \to \cG_3 \to {\mathscr N}_1 \to 0,\;\;\;  0 \to \cG_3 \to \cG_4 \to {\mathscr N}_2 \to 0,\; \ldots , 
  \]
  and so on, that is, defining
  \begin{equation} \label{eq:jr}
    \epsilon_r: =
    \begin{cases}
      1, & \mbox{if $r$ is odd}, \\
      2, & \mbox{if $r$ is even},
    \end{cases}
  \end{equation}
  we take successive $[\cG_{r}] \in \Ext^1({\mathscr N}_{\epsilon_{r}},\cG_{r-1})$, for all $r \geqslant2$, defined by:
  \begin{equation}\label{eq:1}
0 \to \cG_{r-1} \to \cG_{r} \to {\mathscr N}_{\epsilon_{r}} \to 0.
 \end{equation}

The fact that we always get  {\em non--trivial} such extensions, for any $r \geqslant 2$, will be proved in Lemma  \ref{lemma:1}  (iii), below. In any case all vector bundles $\cG_{r}$, recursively defined as in \eqref{eq:1}, are of rank $r$ and $h$-Ulrich, since extensions of $h$-Ulrich bundles are again $h$-Ulrich. From the fact that any $\cG_r$ is recursively defined, Chern classes of $\cG_r$, for any $r \geqslant 2$, are obtained as linear combination, with coefficients depending on $r$, of $c_1({\mathscr N}_i)$ or $c_1({\mathscr N}_i) \cdot c_1({\mathscr N}_j)$, for $1 \leqslant i,j \leqslant 2$. Precisely, taking into account relations as in \eqref{eq:chow}, Chern classes of $\cG_r$  are:

\begin{equation} \label{eq:c1rcaso0}
    c_1(\cG_r): =
    \begin{cases} 
      (r -1)\xi + (r+1)C_0 + \left(r(2c-1) - (\frac{r+1}{2})b - (\frac{r-1}{2}) a \right) F, & \mbox{if $r$ odd}, \\ 
      & \\
      r \xi + r C_0 + \left( r(2c-1) - \frac{r}{2} (b+a) \right) F, & \mbox{if $r$ even},  
    \end{cases}
  \end{equation}
     \begin{eqnarray*}
 c_2(\cG_r) =
    \begin{cases} 
    \scriptstyle (r^2-1) \xi.C_0 + \left((r-1)^2(2c-b-1) - \frac{(r-1)(r-3)}{2} a\right) \xi.F+ \scriptstyle \left(\frac{(r-1)(r+1)}{2}(4c-b-2)\right) C_0.F , & \mbox{if $r\geqslant 3$ odd}, \\
    & \\
\scriptstyle r^2 \xi.C_0 + \left(r(r-1) (2c-b-a-1) + \frac{r^2}{2}a \right) \xi.F 
+ \scriptstyle  \left(r(r-1) (2c-b-a-1)  \right) C_0.F, & \mbox {if $r$ even},  
    \end{cases}
     \end{eqnarray*}
     \begin{eqnarray*}
 c_3(\cG_r) =
    \begin{cases} 
(r^2-1)(r-2)(2c-b-a-1), & \mbox{if $r\geqslant 3$ odd}, \\
& \\
     r^2(r-2)(2c-b-a-1), & \mbox{if $r$  even},  
    \end{cases}
      \end{eqnarray*}

    For any $r \geqslant 1$, from  \eqref{slope}, the $h$-slope of $\cG_r$ is $\mu(\cG_r) = 4(2c-b-a)- 2$.

\begin{lemma} \label{lemma:1} Let  ${\mathscr N}$ denote any of the two line bundles 
${\mathscr N}_1$ and ${\mathscr N}_2$ as in \eqref{eq:Mi}. Then, for all integers $r \geqslant1$, we have
    \begin{itemize}
    \item[(i)] $h^2(\cG_r \otimes  {\mathscr N}^{\vee})= h^3(\cG_r \otimes  {\mathscr N}^{\vee}) = 0$,
      \item[(ii)] $h^2(\cG_r^{\vee} \otimes  {\mathscr N})=h^3(\cG_r^{\vee} \otimes  {\mathscr N}) = 0 $,
      \item[(iii)] $h^1(\cG_r \otimes  {\mathscr N}_{\epsilon_{r+1}}^{\vee})\geqslant 3$. 
      In particular for any integer $r \geqslant 1$ there exist on $X$ rank-$r$, $h$-Ulrich vector bundles $\cG_r$, with Chern classes as in \eqref{eq:c1rcaso0}, of $h$-slope  
      $\mu(\cG_r) =  4(2c-b-a)- 2$ and which arise as non-trivial extensions as in \eqref{eq:1} if $r \geqslant 2$.
      \end{itemize}
  \end{lemma}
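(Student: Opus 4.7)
The plan is to establish the three statements simultaneously by induction on $r\ge 1$, exploiting the defining extension \eqref{eq:1} together with its dual
\begin{equation*}
0 \to {\mathscr N}_{\epsilon_r}^{\vee} \to \cG_r^{\vee} \to \cG_{r-1}^{\vee} \to 0,
\end{equation*}
which is exact because all sheaves involved are locally free. Note that since $\epsilon_{r+1}\ne\epsilon_r$, every tensor product of the form ${\mathscr N}_i\otimes{\mathscr N}_j^{\vee}$ with $i,j\in\{1,2\}$ is either $\cO_X$ or one of the two line bundles $\pm({\mathscr N}_2-{\mathscr N}_1)=\pm\cO(2,-2,b-a)$; under the restriction $0\le a\le b\le 1$ and $c\ge a+b+1$, Lemma \ref{lem:computationsjoan} together with Serre duality shows that both of these line bundles have $h^0=h^2=h^3=0$ and $h^1=3$, the latter fact being \eqref{eq:dimextMi}.

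\emph{Base case $r=1$.} Since $\cG_1={\mathscr N}_1$, claims (i) and (ii) reduce to the cohomology of $\cO_X$ (trivial) and of $\pm({\mathscr N}_2-{\mathscr N}_1)$ (by the observation above); claim (iii) is exactly the statement $h^1({\mathscr N}_2-{\mathscr N}_1)=3$ from \eqref{eq:dimextMi}.

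\emph{Inductive step.} Suppose (i), (ii), (iii) hold for $\cG_{r-1}$. Applied with the index $r-1$, claim (iii) yields $\mathrm{ext}^1({\mathscr N}_{\epsilon_r},\cG_{r-1})=h^1(\cG_{r-1}\otimes{\mathscr N}_{\epsilon_r}^{\vee})\ge 3>0$, so a non-trivial extension \eqref{eq:1} does exist at each step and defines a rank-$r$ vector bundle $\cG_r$ whose Chern classes are those displayed in \eqref{eq:c1rcaso0} and which is $h$-Ulrich (being an extension of $h$-Ulrich bundles). Now tensor \eqref{eq:1} by ${\mathscr N}^{\vee}$ and its dual by ${\mathscr N}$, for ${\mathscr N}\in\{{\mathscr N}_1,{\mathscr N}_2\}$. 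In the resulting long exact sequences, the line-bundle ends ${\mathscr N}_{\epsilon_r}\otimes{\mathscr N}^{\vee}$ and ${\mathscr N}_{\epsilon_r}^{\vee}\otimes{\mathscr N}$ have vanishing $h^2$ and $h^3$ by the opening observation, while the rank-$(r-1)$ ends $\cG_{r-1}\otimes{\mathscr N}^{\vee}$ and $\cG_{r-1}^{\vee}\otimes{\mathscr N}$ have vanishing $h^2$ and $h^3$ by the inductive hypothesis. This yields (i) and (ii) for $\cG_r$.

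For (iii), tensor \eqref{eq:1} by ${\mathscr N}_{\epsilon_{r+1}}^{\vee}$; the relevant portion of the long exact sequence reads
\begin{equation*}
H^1(\cG_r\otimes{\mathscr N}_{\epsilon_{r+1}}^{\vee}) \to H^1({\mathscr N}_{\epsilon_r}\otimes{\mathscr N}_{\epsilon_{r+1}}^{\vee}) \to H^2(\cG_{r-1}\otimes{\mathscr N}_{\epsilon_{r+1}}^{\vee}).
\end{equation*}
The middle term has dimension $3$ by the opening observation, while the right-hand term vanishes by (i) already established for $\cG_{r-1}$; surjectivity of the left arrow gives $h^1(\cG_r\otimes{\mathscr N}_{\epsilon_{r+1}}^{\vee})\ge 3$. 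Finally, the slope formula $\mu(\cG_r)=4(2c-b-a)-2$ follows from \eqref{slope} applied to any $h$-Ulrich bundle.

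The main technical point is the base-case computation of $h^i(\cO(\pm 2,\mp 2,\pm(b-a)))$: it is precisely the restriction $0\le a\le b\le 1$ (together with $c\ge a+b+1$) that forces the unwanted groups to vanish and permits all the long-exact-sequence arguments to propagate cleanly; outside this range, extra sections of $\pm({\mathscr N}_2-{\mathscr N}_1)$ or of products with $\cG_{r-1}^{\vee}$ could appear and the inductive scheme would break down — which is the reason for the hypothesis that opens the section.
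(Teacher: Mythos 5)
Your proof is correct and follows essentially the same route as the paper's: induction on $r$, with the base case reduced to the cohomology of $\cO_X$ and $\pm({\mathscr N}_2-{\mathscr N}_1)$ computed in \S\,\ref{extensionsrk2}, parts (i)--(ii) propagated by tensoring \eqref{eq:1} and its dual with ${\mathscr N}^{\vee}$ and ${\mathscr N}$, and part (iii) obtained from the surjection $H^1(\cG_r\otimes{\mathscr N}_{\epsilon_{r+1}}^{\vee})\twoheadrightarrow H^1({\mathscr N}_{\epsilon_r}\otimes{\mathscr N}_{\epsilon_{r+1}}^{\vee})$ forced by the vanishing of $H^2(\cG_{r-1}\otimes{\mathscr N}_{\epsilon_{r+1}}^{\vee})$ from (i). Your closing remark correctly identifies the role of the hypothesis $0\leq a\leq b\leq 1$ in making the base-case vanishings (in particular $\delta=0$ in \eqref{eq:delta}) hold.
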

  
	\begin{proof} For $r=1$, by definition,  we have $\cG_1 = {\mathscr N}_1$; therefore $\cG_1 \otimes {\mathscr N}^{\vee}$ and 
	$\cG_1^{\vee} \otimes {\mathscr N}$  are either equal to $\mathcal O_{X}$, if ${\mathscr N}={\mathscr N}_1$, or equal to ${\mathscr N}_1 - {\mathscr N}_2$ and ${\mathscr N}_2 - {\mathscr N}_1$, respectively, if ${\mathscr N}={\mathscr N}_2$. Therefore (i) and (ii) hold true by computations as in \S\,\ref{extensionsrk2}, cases $(i)$, $(ii)_1$ and $(iii)_1$, according to \linebreak $0 \leq a \leq b \leq 1$. As for (iii), by \eqref{eq:jr} ${\mathscr N}_{\epsilon_{2}} = {\mathscr N}_2$ thus 
	$h^1(\cG_1 \otimes {\mathscr N}_{2}^{\vee}) = h^1({\mathscr N}_1 - {\mathscr N}_2) = 3$. 
	
	Therefore, we amy assume $r \geqslant2$ and proceed by induction. 
	
	Regarding (i), since it holds for $r=1$, assuming it holds for $r-1$, then 
	by tensoring \eqref{eq:1} with ${\mathscr N}^{\vee}$ we get that
	\begin{eqnarray*}
     h^j(\cG_{r} \otimes {\mathscr N}^{\vee}) =0, \;\; j=2,3,
    \end{eqnarray*}
	because $h^j(\cG_{r-1} \otimes {\mathscr N}^{\vee}) = 0$, for $j=2,3$, by inductive hypothesis whereas 
		$h^j({\mathscr N}_{\epsilon_{r}} \otimes {\mathscr N}^{\vee}) = 0 $, for $j=2,3$, since ${\mathscr N}_{\epsilon_{r}} \otimes {\mathscr N}^{\vee}$ 
		 is either $\mathcal O_{X}$, or ${\mathscr N}_2-{\mathscr N}_1$, or ${\mathscr N}_1-{\mathscr N}_2$. 
		 
		 A similar reasoning, tensoring the dual of \eqref{eq:1} by ${\mathscr N}$, proves (ii).

    To prove (iii), tensor \eqref{eq:1} by ${\mathscr N}_{\epsilon_{r+1}}^{\vee}$ and use that $h^2(\cG_{r-1}\otimes {\mathscr N}_{\epsilon_{r+1}}^{\vee})=0$ by (i). Thus 
		we have the surjection
$$H^1(\cG_r \otimes {\mathscr N}_{\epsilon_{r+1}}^{\vee}) \twoheadrightarrow H^1({\mathscr N}_{\epsilon_{r}} \otimes {\mathscr N}_{\epsilon_{r+1}}^{\vee}),$$  which implies that 
$h^1(\cG_r \otimes {\mathscr N}_{\epsilon_{r+1}}^{\vee}) \geqslant h^1({\mathscr N}_{\epsilon_{r}} \otimes {\mathscr N}_{\epsilon_{r+1}}^{\vee})$. According to the 
parity of $r$, we have that ${\mathscr N}_{\epsilon_{r}} \otimes {\mathscr N}_{\epsilon_{r+1}}^{\vee}$ equals 
either ${\mathscr N}_1 - {\mathscr N}_2$ or ${\mathscr N}_2 - {\mathscr N}_1$. From computations as in\S\,\ref{extensionsrk2}, cases $(i)$, $(ii)_1$ and $(iii)_1$, 
$h^1({\mathscr N}_1-{\mathscr N}_2) = h^1({\mathscr N}_2-{\mathscr N}_1) = 3$. Therefore, regardless the parity of $r$, 
one gets $h^1(\cG_r \otimes  {\mathscr N}_{\epsilon_{r+1}}^{\vee})\geqslant h^1({\mathscr N}_1-{\mathscr N}_2) = h^1({\mathscr N}_2-{\mathscr N}_1) = 3$, which proves the first part of the statement in (iii). Finally, since from above $3 \leqslant h^1(\cG_{r-1} \otimes  {\mathscr N}_{\epsilon_{r}}^{\vee}) = {\rm ext}^1({\mathscr N}_{\epsilon_{r}},\; \cG_{r-1}))$, this implies that for any $r\geq 2$ there exist rank-$r$ vector bundles $\cG_r$ arising as non-trivial extensions as in \eqref{eq:1}, as stated. This complete the proof. 
\end{proof}

 From Lemma \ref{lemma:1}, at any step we can therefore always pick {\em non--trivial} extensions of the form \eqref{eq:1} and 
we will henceforth do so. As an auxiliary result, we will need the following:

 \begin{lemma} \label{lemma:2}  For any integer $r \geqslant 1$ we have: 
    \begin{itemize}
    \item[(i)] $h^1(\cG_{r+1} \otimes {\mathscr N}_{\epsilon_{r+1}}^{\vee})=h^1(\cG_r \otimes {\mathscr N}_{\epsilon_{r+1}}^{\vee})-1$,
    \item[(ii)] $h^1(\cG_r \otimes {\mathscr N}_{\epsilon_{r+1}}^{\vee})= 
		\begin{cases}
      \frac{(r+1)}{2} h^1({\mathscr N}_1-{\mathscr N}_2) - \frac{(r-1)}{2} = r+2, & \mbox{if $r$ is odd}, \\
			\frac{r}{2} h^1({\mathscr N}_2-{\mathscr N}_1) - \frac{(r-2)}{2} = r+1, & \mbox{if $r$ is even}.
    \end{cases}$
		\item[(iii)] $h^2(\cG_r \otimes \cG_r^{\vee}) = h^3(\cG_r \otimes \cG_r^{\vee})=0$,
		
\item[(iv)] $\chi(\cG_r \otimes {\mathscr N}_{\epsilon_{r+1}}^{\vee})= 
\begin{cases}
      \frac{(r+1)}{2} (1- h^1({\mathscr N}_1-{\mathscr N}_2)) -1  =  -r - 2, & \mbox{if $r$ is odd}, \\
			\frac{r}{2} (1- h^1({\mathscr N}_2-{\mathscr N}_1)) = - r  , & \mbox{if $r$ is even}.
    \end{cases}$

\item[(v)] $\chi({\mathscr N}_{\epsilon_{r}} \otimes \cG_r^{\vee})= 
\begin{cases}
      \frac{(r-1)}{2} (1- h^1({\mathscr N}_1-{\mathscr N}_2)) + 1  = -r + 2, & \mbox{if $r$ is odd}, \\
			\frac{r}{2} (1- h^1({\mathscr N}_2-{\mathscr N}_1)) = - r , & \mbox{if $r$ is even}.
    \end{cases}$
\item[(vi)]  $\chi(\cG_r \otimes \cG_r^{\vee})= 
\begin{cases}
   \scriptstyle   \frac{(r^2 -1)}{4} (2-h^1({\mathscr N}_1-{\mathscr N}_2)-h^1({\mathscr N}_2-{\mathscr N}_1)) + 1  = -r^2 + 2, & \mbox{if $r$ is odd}, \\
		 \scriptstyle	\frac{r^2}{4} (2- h^1({\mathscr N}_1-{\mathscr N}_2)-h^1({\mathscr N}_2-{\mathscr N}_1)) = -r^2 , & \mbox{if $r$ is even}.
    \end{cases}$
\end{itemize}
\end{lemma}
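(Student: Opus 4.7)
All six statements are cohomological consequences of three ingredients: the vanishings of Lemma \ref{lemma:1}, the defining sequences \eqref{eq:1}, and additivity of the Euler characteristic over the filtration of $\cG_r$ whose subquotients are the line bundles $\mathscr{N}_{\epsilon_1},\ldots,\mathscr{N}_{\epsilon_r}$. The natural order of proof is (i), (iii), (ii), and then the $\chi$-formulas (iv)--(vi).

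For (i), I would tensor \eqref{eq:1} (shifted by one) by $\mathscr{N}_{\epsilon_{r+1}}^{\vee}$, so that the quotient becomes $\cO_X$, and read the long exact sequence in cohomology. The connecting map
$$H^0(\cO_X)\;\longrightarrow\; H^1(\cG_r\otimes\mathscr{N}_{\epsilon_{r+1}}^{\vee})\;\cong\;\Ext^1(\mathscr{N}_{\epsilon_{r+1}},\cG_r)$$
sends the generator of $H^0(\cO_X)$ to the extension class $[\cG_{r+1}]$, which is non-zero by our choice at each step (possible by Lemma \ref{lemma:1}(iii)); moreover Lemma \ref{lemma:1}(i) kills $H^2$ on the left. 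The LES then forces $h^1$ to drop by exactly one. For (iii) I would tensor the dual of \eqref{eq:1} with $\cG_r$, obtaining
$$0\;\longrightarrow\;\cG_r\otimes\mathscr{N}_{\epsilon_r}^{\vee}\;\longrightarrow\;\cG_r\otimes\cG_r^{\vee}\;\longrightarrow\;\cG_r\otimes\cG_{r-1}^{\vee}\;\longrightarrow\;0;$$
Lemma \ref{lemma:1}(i) handles the left-hand term, and the vanishing of $H^2,H^3$ of the right-hand term follows by a secondary induction obtained by tensoring \eqref{eq:1} itself with $\cG_{r-1}^{\vee}$ and combining Lemma \ref{lemma:1}(ii) for $\mathscr{N}_{\epsilon_r}\otimes\cG_{r-1}^{\vee}$ with the inductive hypothesis $h^j(\cG_{r-1}\otimes\cG_{r-1}^{\vee})=0$.

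Part (ii) will then be proved by induction in steps of two. The base cases $r=1,2$ are direct: $r=1$ reduces to $h^1(\mathscr{N}_1-\mathscr{N}_2)=3$, while $r=2$ follows from (i) applied with $r=1$ together with a short coboundary argument showing $h^1(\cG_2\otimes\mathscr{N}_1^{\vee})=3$. For the inductive step I would tensor \eqref{eq:1} by $\mathscr{N}_{\epsilon_{r+1}}^{\vee}$: the quotient is $\mathscr{N}_{\epsilon_r}\otimes\mathscr{N}_{\epsilon_{r+1}}^{\vee}$, which equals $\pm(\mathscr{N}_1-\mathscr{N}_2)$ and satisfies $h^0=h^2=h^3=0$, $h^1=3$. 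The associated LES yields $h^1(\cG_r\otimes\mathscr{N}_{\epsilon_{r+1}}^{\vee})=h^1(\cG_{r-1}\otimes\mathscr{N}_{\epsilon_{r+1}}^{\vee})+3$, and combining this with (i) applied to the pair $(r-2,r-1)$ (which contributes $-1$) produces the two-step recursion $+2$ that matches the closed-form formulas; the parity split accounts for the different starting values $3$ and $3$ at $r=1$ and $r=2$.

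Finally, parts (iv)--(vi) follow from additivity of $\chi$ over the filtration of $\cG_r$ (and of $\cG_r^{\vee}$): every Euler characteristic reduces to a weighted count of the elementary contributions $\chi(\cO_X)=1$ and $\chi(\mathscr{N}_i-\mathscr{N}_j)=-3$ for $i\neq j$ (the latter following from the vanishings $h^0=h^2=h^3=0$ and $h^1=3$ used above). The number of odd-index (resp.\ even-index) subquotients of $\cG_r$ is $\lceil r/2\rceil$ (resp.\ $\lfloor r/2\rfloor$); tallying matched versus mismatched pairs of $\epsilon$-values delivers (iv) and (v) as single sums and (vi) as a double sum, yielding the stated expressions once the parity of $r$ is split. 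The only genuine delicacy lies in (i), where the injectivity of the coboundary requires the extension $\cG_{r+1}$ to be non-trivial; Lemma \ref{lemma:1}(iii) is exactly what guarantees this, so the remainder of the argument is careful bookkeeping.
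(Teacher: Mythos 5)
Your proposal is correct, and it follows exactly the route the paper intends: the paper itself omits the argument, referring to \cite[Lemma 3.4]{fa-fl3} as "straightforward and technical", and your combination of the long exact sequences from \eqref{eq:1}, the injectivity of the coboundary coming from the non-triviality of the extension class (Lemma \ref{lemma:1}(iii)), the vanishings of Lemma \ref{lemma:1}(i)--(ii), and additivity of $\chi$ over the filtration with subquotients $\mathscr{N}_{\epsilon_1},\ldots,\mathscr{N}_{\epsilon_r}$ (using $\chi(\mathcal O_X)=1$ and $\chi(\mathscr{N}_i-\mathscr{N}_j)=-3$) is precisely that argument; I verified that all the resulting counts match the stated formulas. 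The only cosmetic remark is that in part (i) what is actually needed besides the non-trivial extension class is $h^1(\mathcal O_X)=0$ rather than the $H^2$-vanishing you invoke, but this does not affect the correctness of the argument.
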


\begin{proof} The proof is straightforward and technical, similar to that in \cite[Lemma 3.4]{fa-fl3}, where the interested reader is referred. 
\end{proof}	

Notice some fundamental remarks arising from the first step of the previous iterative contruction in \eqref{eq:1}, which turns out from the proofs of Theorems \ref{thm:rk2case1} and \ref{thm:rk2case2}. We set $\cG_1 = {\mathscr N}_1$, which is $h$-Ulrich, of slope $\mu = \mu({\mathscr N}_1) = 4(2c-b-a)-2$; by considering 
non--trivial extensions \eqref{eq:1r1}, $\cG_2$ turned out to be a simple (so indecomposable) bundle, as it follows from \cite[Lemma\,4.2]{c-h-g-s} and from 
the fact that $\cG_1 = {\mathscr N}_1$ and ${\mathscr N}_{\epsilon_2} ={\mathscr N}_2$ are both slope-stable, of the same $h$-slope $\mu = 4(2c-b-a)-2$, and non-isomorphic line bundles. 

By construction, $\cG_2$ turned out to be moreover $h$-Ulrich, so strictly semistable, of $h$-slope $\mu =  4(2c-b-a)- 2$. On the other hand, in the proofs of Theorems \ref{thm:rk2case1}, \ref{thm:rk2case2} we showed that $\cG_2$ deforms, in a smooth, (irreducible and flat) modular family, to a slope-stable $h$-Ulrich vector bundle $\cU_2 := \cU$, of the same $h$-slope $\mu$, the same Chern classes 
$c_i(\cU_2) = c_i(\cG_2)$, $1 \leqslant i \leqslant 2$. By semi-continuity on the irreducible modular family to which such bundles belong, cohomological properties as in Lemma \ref{lemma:1}-$(i-ii)$ and Lemma \ref{lemma:2}-$(iii-iv-v-vi)$ hold true when $\cG_2$ therein is replaced by $\cU_2$. 

Therefore, from the fact that $h^2(\cU_2 \otimes \cU_2^{\vee}) = 0$ and from the simplicity of $\cU_2$, by using \cite[Prop.\,2.10]{c-h-g-s} and the dimensional computation on $h^1(\cU_2 \otimes \cU_2^{\vee})$ we got that $\cU_2$ is a general point of the corresponding modular family and also a smooth point of such family, i.e. the irreducible modular family is generically smooth. Up to shrinking to the open set of smooth points of such an irreducible modular family, we may consider a smooth modular family of simple, slope-stable, Ulrich bundles and the GIT-quotient relation restricted to such a smooth modular family gives rise to an \'etale cover of an open dense subset of the modular component $\mathcal M = \mathcal M(2)$ (by the very definition of modular family, cf. \cite[pp.\,1250083-9/10]{c-h-g-s}), which is therefore generically smooth of the same dimension of the modular family, i.e.  $h^1(\cU_2 \otimes \cU_2^{\vee})$, and whose general point is $[\cU_2]$, described in Theorems \ref{thm:rk2case1}, \ref{thm:rk2case2}.

By induction we can therefore assume that, up to a given integer $r \geqslant 3$, we have already constructed a generically smooth, irreducible modular component 
$\mathcal M(r-1)$ of the moduli space of bundles of rank $(r-1)$, which are $h$-Ulrich, with Chern classes $c_i := c_i(\cG_{r-1})$ as in \eqref{eq:c1rcaso0} (where in the formulas therein $r$ is obviously replaced by $r-1$), for $1 \leqslant i \leqslant 3$,  and whose general point $[\cU_{r-1}] \in \mathcal M(r-1)$ is slope-stable, of $h$-slope 
$\mu(\cU_{r-1}) = 4(2c-b-a)- 2$ and which satisfies Lemma \ref{lemma:1}-$(i-ii)$ and Lemma \ref{lemma:2}-$(iii-iv-v-vi)$. Consider therefore extensions 
\begin{equation} \label{eq:estensionM}
  0 \to \cU_{r-1} \to \cF_r \to {\mathscr N}_{\epsilon_r} \to 0, 
\end{equation}
with $[\cU_{r-1}] \in \mathcal M(r-1)$ general and with ${\mathscr N}_{\epsilon_r}$ defined as in \eqref{eq:jr}, \eqref{eq:1}, according to the parity of $r$. 
Notice that ${\rm Ext}^1({\mathscr N}_{\epsilon_r}, \cU_{r-1}) \cong H^1(\cU_{r-1} \otimes {\mathscr N}_{\epsilon_r}^{\vee}).$

\begin{prop}\label{prop:new} In the above set-up, one has 
\begin{eqnarray*}h^1(\cU_{r-1} \otimes {\mathscr N}_{\epsilon_r}^{\vee}) \geqslant  2.
\end{eqnarray*}
 In particular, 
${\rm Ext}^1(M_{\epsilon_r}, \cU_{r-1})$ contains non-trivial extensions as in \eqref{eq:estensionM}.

Moreover if, for a given $r \geqslant 2$, we assume that $\mathcal M(r-1) \neq \emptyset$ with $[\cU_{r-1}] \in \mathcal M(r-1)$ general corresponding to a rank-$(r-1)$ vector bundle, which is $h$-Ulrich, slope-stable, of $h$-slope $\mu(\cU_{r-1}) =4(2c-b-a)- 2$ and if we take ${\mathscr N}_{\epsilon_r}$ as in \eqref{eq:jr} and \eqref{eq:1} (where, for $r=2$, $\cU_1 = \cG_1 = {\mathscr N}_1$ and 
$\mathcal M(1) = \{{\mathscr N}_1\}$ is a singleton), then $[\cF_r] \in {\rm Ext}^1({\mathscr N}_{\epsilon_{r}}, \cU_{r-1})$ general is a rank-$r$ vector bundle, which is simple, $h$-Ulrich, with Chern classes as in \eqref{eq:c1rcaso0}, of $h$-slope $\mu(\cF_r) = 4(2c-b-a)- 2$ and with $h^j(\cF_r \otimes \cF^{\vee}_r) = 0$, for $2 \leqslant j \leqslant 3$. 
\end{prop}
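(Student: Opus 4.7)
The plan is to establish the two assertions in turn, leveraging upper semicontinuity of cohomology and deformation invariance of the Euler characteristic to transfer the conclusions of Lemmas \ref{lemma:1} and \ref{lemma:2}, originally stated for the iteratively constructed bundle $\cG_{r-1}$, to the general deformation $\cU_{r-1} \in \mathcal M(r-1)$.

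First I would treat the lower bound on $h^1(\cU_{r-1} \otimes {\mathscr N}_{\epsilon_r}^{\vee})$. Slope-stability of $\cU_{r-1}$, together with the equality $\mu(\cU_{r-1}) = \mu({\mathscr N}_{\epsilon_r}) = 4(2c-b-a)-2$, forces $h^0(\cU_{r-1} \otimes {\mathscr N}_{\epsilon_r}^{\vee}) = 0$: a non-zero map ${\mathscr N}_{\epsilon_r} \to \cU_{r-1}$ is necessarily injective (as ${\mathscr N}_{\epsilon_r}$ is torsion-free of rank one) and would exhibit a sub-line bundle of $\cU_{r-1}$ with maximal slope, contradicting strict slope-stability when $r \geq 3$; the case $r=2$ is handled directly since $H^0({\mathscr N}_1 - {\mathscr N}_2) = 0$ by Lemma \ref{lem:computationsjoan}. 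Lemma \ref{lemma:1}-(i) applied to $\cG_{r-1}$ and upper semicontinuity along the generically smooth, irreducible modular component $\mathcal M(r-1)$ ensure $h^2(\cU_{r-1} \otimes {\mathscr N}_{\epsilon_r}^{\vee}) = h^3(\cU_{r-1} \otimes {\mathscr N}_{\epsilon_r}^{\vee}) = 0$. Deformation invariance then gives $\chi(\cU_{r-1} \otimes {\mathscr N}_{\epsilon_r}^{\vee}) = \chi(\cG_{r-1} \otimes {\mathscr N}_{\epsilon_r}^{\vee})$, whose value by Lemma \ref{lemma:2}-(iv) (applied with index $r-1$) is $-r-1$ if $r$ is even and $1-r$ if $r$ is odd, hence $\leq -2$ for all $r \geq 2$. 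Combining these vanishings yields $h^1(\cU_{r-1} \otimes {\mathscr N}_{\epsilon_r}^{\vee}) = -\chi(\cU_{r-1} \otimes {\mathscr N}_{\epsilon_r}^{\vee}) \geq 2$, so non-trivial extensions \eqref{eq:estensionM} exist.

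For the second assertion, Ulrichness of $\cF_r$ follows at once since extensions of Ulrich sheaves are Ulrich; its Chern classes match those of $\cG_r$ in \eqref{eq:c1rcaso0} by Whitney's formula applied to \eqref{eq:estensionM} together with $c_i(\cU_{r-1}) = c_i(\cG_{r-1})$, and its $h$-slope equals $4(2c-b-a)-2$ by \eqref{slope}. Simplicity of $\cF_r$ is a standard twist-and-kill argument: any $\phi \in \mathrm{End}(\cF_r)$ induces a composition $\cU_{r-1} \hookrightarrow \cF_r \xrightarrow{\phi} \cF_r \twoheadrightarrow {\mathscr N}_{\epsilon_r}$ which vanishes by the slope-stability argument above, so $\phi$ preserves $\cU_{r-1}$ and, by the inductive simplicity of $\cU_{r-1}$, acts on it as a scalar $\lambda$; subtracting $\lambda \cdot \mathrm{id}_{\cF_r}$ one obtains a map factoring as ${\mathscr N}_{\epsilon_r} \to \cF_r$, whose composition with $\cF_r \twoheadrightarrow {\mathscr N}_{\epsilon_r}$ is a scalar that must vanish (else the extension would split, contradicting its non-triviality), forcing the residual map ${\mathscr N}_{\epsilon_r} \to \cU_{r-1}$ to be zero by the same slope-stability argument. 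Hence $\phi = \lambda \cdot \mathrm{id}_{\cF_r}$.

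Finally, to establish $h^j(\cF_r \otimes \cF_r^{\vee}) = 0$ for $j = 2,3$, I would tensor the dual of \eqref{eq:estensionM} with $\cF_r$, producing
\[
0 \to \cF_r \otimes {\mathscr N}_{\epsilon_r}^{\vee} \to \cF_r \otimes \cF_r^{\vee} \to \cF_r \otimes \cU_{r-1}^{\vee} \to 0,
\]
and separately tensor \eqref{eq:estensionM} itself with ${\mathscr N}_{\epsilon_r}^{\vee}$ and with $\cU_{r-1}^{\vee}$, producing four outer building blocks: $\cU_{r-1} \otimes {\mathscr N}_{\epsilon_r}^{\vee}$, $\mathcal O_X$, $\cU_{r-1} \otimes \cU_{r-1}^{\vee}$, and ${\mathscr N}_{\epsilon_r} \otimes \cU_{r-1}^{\vee}$. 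Each of these has $H^2 = H^3 = 0$: the first and fourth by Lemma \ref{lemma:1}-(i),(ii) combined with upper semicontinuity as in the first step, the second since $h^j(\mathcal O_X) = 0$ for $j \geq 1$, and the third by the inductive hypothesis (Lemma \ref{lemma:2}-(iii) transferred from $\cG_{r-1}$ to $\cU_{r-1}$ by semicontinuity). Chasing the resulting long exact sequences then yields the desired vanishing. The main obstacle I anticipate is verifying that all the cohomological inputs of Lemmas \ref{lemma:1} and \ref{lemma:2} transfer correctly from $\cG_{r-1}$ to the general deformation $\cU_{r-1}$; but since these inputs are either upper-semicontinuous vanishings or the deformation-invariant Euler characteristic, no additional work is required beyond knowing that $\mathcal M(r-1)$ is irreducible and generically smooth with $[\cG_{r-1}]$ and $[\cU_{r-1}]$ in the same component.
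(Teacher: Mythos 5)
Your argument is sound in substance and follows the route the paper clearly intends but does not write out: the paper's own ``proof'' of Proposition \ref{prop:new} is a one-line referral to \cite[Lemma 3.5 and Corollary 3.6]{fa-fl3}, so you have supplied the missing details. Your treatment of the second half is exactly right: Ulrichness, Chern classes and slope are immediate; the simplicity argument (kill the composition $\cU_{r-1}\to\cF_r\to{\mathscr N}_{\epsilon_r}$ by slope-stability, reduce to a scalar on $\cU_{r-1}$, and use non-splitness plus $\mathrm{Hom}({\mathscr N}_{\epsilon_r},\cU_{r-1})=0$) is the standard generalization of \cite[Lemma 4.2]{c-h-g-s}; and the three short exact sequences you chase for $h^j(\cF_r\otimes\cF_r^\vee)=0$, $j=2,3$, are precisely the right ones.

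The one step you should reroute is the transfer of the vanishings $h^2=h^3=0$ from $\cG_{r-1}$ to $\cU_{r-1}$ ``by upper semicontinuity along $\mathcal M(r-1)$.'' Semicontinuity only applies if $[\cG_{r-1}]$ actually lies in the irreducible family $\mathcal M(r-1)$, and for $r-1\geq 3$ this is not automatic from the construction: $\mathcal M(r-1)$ is the modular family of $\cF_{r-1}$, built from extensions of ${\mathscr N}_{\epsilon_{r-1}}$ by a \emph{general} $\cU_{r-2}\in\mathcal M(r-2)$, whereas $\cG_{r-1}$ is an extension by $\cG_{r-2}$, which need not be (a general point of) $\mathcal M(r-2)$. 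The fix is cheap and is in fact what the paper's setup already provides: the paragraph preceding the Proposition builds the required vanishings of Lemma \ref{lemma:1}-$(i$-$ii)$ and Lemma \ref{lemma:2}-$(iii)$--$(vi)$ for the general $[\cU_{r-1}]\in\mathcal M(r-1)$ into the inductive hypothesis (they are proved for $\cF_{r-1}$, which \emph{is} a point of $\mathcal M(r-1)$, and then transferred to $\cU_{r-1}$ by semicontinuity within that family), so you should simply cite the inductive hypothesis rather than semicontinuity from $\cG_{r-1}$. Your use of ``deformation invariance of $\chi$'' has the same cosmetic issue, but there it is harmless: $\chi(\cU_{r-1}\otimes{\mathscr N}_{\epsilon_r}^{\vee})=\chi(\cG_{r-1}\otimes{\mathscr N}_{\epsilon_r}^{\vee})$ already follows from Hirzebruch--Riemann--Roch and $c_i(\cU_{r-1})=c_i(\cG_{r-1})$, which is how the paper justifies the analogous equalities in Lemmas \ref{lemma:genUr} and \ref{lemma:dimU}. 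With that adjustment your proof is complete; note also that your computation correctly accounts for the fact that $h^1(\cU_{r-1}\otimes{\mathscr N}_{\epsilon_r}^{\vee})=-\chi$ can be strictly smaller than $h^1(\cG_{r-1}\otimes{\mathscr N}_{\epsilon_r}^{\vee})$ when $r$ is odd, where $h^0(\cG_{r-1}\otimes{\mathscr N}_1^{\vee})=1$ because of the inclusion ${\mathscr N}_1=\cG_1\hookrightarrow\cG_{r-1}$, consistent with the inequality \eqref{eq:dimextv} used later.
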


\begin{proof} The proof is straightforward and technical, similar to those in \cite[Lemma 3.5 and  Corollary 3.6]{fa-fl3}, to which the interested reader is referred. 
\end{proof}	

From Proposition \ref{prop:new} $[\cF_r] \in {\rm Ext}^1({\mathscr N}_{\epsilon_{r}}, \cU_{r-1})$ general is simple with $h^2(\cF^{\vee}_r \otimes \cF_r)= 0$. Therefore, by \cite[Proposition\;10.2]{c-h-g-s}, $\mathcal F_r$ admits a smooth modular family which, with a small abuse of notation, we denote by $\mathcal M(r)$ as the modular component it will define. Indeed, by definition of smooth modular family as in \cite[pp.\,1250083-9/10]{c-h-g-s}, an open dense subset of it will be  an \'etale cover of the modular component $\mathcal M(r)$ we are going to contruct; for this reason and to avoid heavy notation, they will be identified. 

For $r \geqslant 2$, such a smooth modular family $\mathcal M(r)$ contains a subscheme, denoted by $\mathcal M(r)^{\rm ext}$, which parametrizes bundles $\cF_r$ arising from non--trivial extensions as in \eqref{eq:estensionM}.

\begin{lemma} \label{lemma:genUr}  Let $r \geqslant2$  be an integer and let 
$\cU_r$ be a general member of the modular family $\mathcal M(r)$ defined above. Then $\cU_r$ is a vector bundle of rank $r$, which is Ulrich with respect to $\xi$, with $h$-slope $\mu:= \mu(\cU_r) = 4(2c-b-a)- 2$, and with  Chern classes as in \eqref{eq:c1rcaso0}. 
   
   Moreover $\cU_r$ is simple, satisfying
  \begin{itemize}
\item[(i)]  $\chi(\cU_r \otimes \cU_r^{\vee})=   \begin{cases} -r^2  + 2, &  \mbox{if $r$ is odd,}
      \\
	-r^2 , & \mbox{if $r$ is even}.
    \end{cases}$
 \item[(ii)] $h^j(\cU_r \otimes \cU_r^{\vee})=0$, for $j =2,3$.  
  \end{itemize}
\end{lemma}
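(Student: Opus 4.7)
My plan is to argue by induction on $r$, with the base case $r=2$ already provided by Theorems \ref{thm:rk2case1} and \ref{thm:rk2case2}, whose conclusions give precisely the rank-two version of the statement (slope-stability, speciality, correct Chern classes, and the vanishings $h^j(\cU_2\otimes\cU_2^\vee)=0$ for $j=2,3$ as in \eqref{eq:cohomologyU2}, \eqref{eq:cohomologyU} and \eqref{eq:cohomologyU2e}). So the real content lies in the inductive step $r-1 \rightsquigarrow r$, which reduces to a straightforward application of Proposition \ref{prop:new} combined with semi-continuity/invariance arguments on the smooth modular family.

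Assuming the statement for rank $r-1$, I would first invoke Proposition \ref{prop:new} to pick a general $[\cF_r]\in\Ext^1({\mathscr N}_{\epsilon_r},\cU_{r-1})$; that proposition already tells us $\cF_r$ is a rank-$r$, simple, $h$-Ulrich bundle with Chern classes matching \eqref{eq:c1rcaso0} (by additivity of Chern classes on \eqref{eq:estensionM} and the inductive identification $c_i(\cU_{r-1})=c_i(\cG_{r-1})$), of $h$-slope $\mu=4(2c-b-a)-2$ by \eqref{slope}, and with $h^j(\cF_r\otimes \cF_r^\vee)=0$ for $j=2,3$. Since $\cF_r$ is simple and satisfies $h^2(\cF_r\otimes \cF_r^\vee)=0$, by \cite[Proposition 2.10]{c-h-g-s} it sits inside a (flat, irreducible) smooth modular family, which by construction is exactly $\mathcal M(r)$. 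Then for $[\cU_r]\in\mathcal M(r)$ general, the properties of $\cF_r$ transfer as follows: rank and Chern classes are invariants of the connected flat family; $h$-Ulrichness is open by semi-continuity (it is a cohomology-vanishing condition); the vanishings $h^j(\cU_r\otimes \cU_r^\vee)=0$ for $j=2,3$ are again open by semi-continuity from $\cF_r$; and simplicity $h^0(\cU_r\otimes \cU_r^\vee)=1$ is an open condition on the modular family by Theorem \ref{thm:stab}-(d) and semi-continuity. The slope equality $\mu(\cU_r)=4(2c-b-a)-2$ then follows from \eqref{slope} applied to the Ulrich bundle $\cU_r$.

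For part (i), I would compute $\chi(\cU_r\otimes\cU_r^\vee)$ through Hirzebruch--Riemann--Roch: since the Euler characteristic depends only on the Chern classes, and since $c_i(\cU_r)=c_i(\cF_r)=c_i(\cG_r)$ for $i=1,2,3$ (the latter by Whitney additivity on the defining extensions \eqref{eq:1} and \eqref{eq:estensionM} together with the inductive hypothesis), one gets $\chi(\cU_r\otimes\cU_r^\vee)=\chi(\cG_r\otimes\cG_r^\vee)$, and the claimed values then follow directly from Lemma \ref{lemma:2}(vi). Part (ii) is exactly the vanishing extracted above from $\cF_r$ via semi-continuity.

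The only subtlety that needs care is making sure the inductive construction stays inside the regime where Proposition \ref{prop:new} applies, i.e.\ that the general $\cU_{r-1}\in\mathcal M(r-1)$ is slope-stable (not just simple) so that the hypotheses of Proposition \ref{prop:new} are met at each step; this is granted by the inductive hypothesis (which asserts slope-stability at rank $r-1$). A minor but annoying point is the typographical ``Ulrich with respect to $\xi$'' in the statement, which of course should read ``Ulrich with respect to $h$'', consistently with the open nature of the semi-continuity arguments above. No serious obstacle is expected: once Proposition \ref{prop:new} is granted, the entire induction is a routine semi-continuity plus Hirzebruch--Riemann--Roch bookkeeping.
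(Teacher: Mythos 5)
Your proposal is correct and follows essentially the same route as the paper: the properties of the extension bundle $\cF_r$ furnished by Proposition \ref{prop:new} (rank, Ulrichness, Chern classes, simplicity, and the vanishings $h^j(\cF_r\otimes\cF_r^{\vee})=0$ for $j=2,3$) are transferred to the general member $\cU_r$ of the smooth modular family by semi-continuity and invariance in the irreducible flat family, and part (i) is obtained from Lemma \ref{lemma:2}-(vi) since $\chi(\cU_r\otimes\cU_r^{\vee})$ depends only on the (fixed) Chern classes, which coincide with those of $\cG_r$. The only cosmetic difference is that you make the induction explicit inside the proof, whereas the paper sets it up in the surrounding discussion; your observation that ``Ulrich with respect to $\xi$'' should read ``Ulrich with respect to $h$'' is also correct.
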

  
\begin{proof} Since $\mathcal F_r$ is of rank $r$ and $h$-Ulrich, the same holds true for the general member $[\cU_r] \in \mathcal M(r)$, since $h$-Ulrichness is an open property in irreducible flat families as the smooth modular family $\mathcal M(r)$ is. In particular, from \eqref{slope}, one has $\mu(\cU_r) = \mu(\mathcal F_r) = \mu(\cU_{r-1})= 4(2c-b-a)- 2$. For same reasons, Chern classes of $\cU_r$ coincide with those of $\cF_r$ which, in turn, are as in \eqref{eq:c1rcaso0}.

Since $\mathcal F_r$ is simple, as it follows from Proposition \ref{prop:new} and from \cite[Lemma\,4.2]{c-h-g-s}, by semi-continuity on the (flat) modular family $\mathcal M(r)$ one has also $h^0( \cU_r \otimes \cU_r^{\vee}) = 1$,  i.e. $\cU_r$ is simple too.

Property $(ii)$ follows by semi-continuity in the modular family $\mathcal M(r)$, when $\cU_r$ specializes to $\cF_r$, and from Propostion \ref{prop:new}.

Property $(i)$ follows from Lemma \ref{lemma:2}-$(vi)$, since the given $\chi$ depends only on the Chern classes of $X$, which are fixed, and on the Chern classes of the two factors,  which in turn are those as $\cF_r$ and so of $\cG_r$ as well. 
\end{proof}

We can observe that the general member $\cU_r$ in the modular family $\mathcal M(r)$ is also slope--stable w.r.t. $h$ as it is proved in the next result.

\begin{lemma} \label{lemma:dimU} Let $r \geqslant2$  be an integer and assume to have 
a modular component $\mathcal M(r-1)$ as in Proposition \ref{prop:new} and let 
$[\cU_{r-1}] \in \mathcal M(r-1)$ be a general point, where $\mathcal M(r-1)$. Then, the (flat) modular family $\mathcal M(r)$ as in Lemma \ref{lemma:genUr} is  generically  smooth, of dimension 
\begin{eqnarray*}
		\dim (\mathcal M(r) ) = \begin{cases} r^2 -1, & \mbox{if $r$ is odd}, \\
			 r^2+1 , & \mbox{if $r$ is even}.
    \end{cases}
    \end{eqnarray*} 
    
    Furthermore the modular family $\mathcal M(r)$ strictly contains the locally closed subscheme $\mathcal M(r)^{\rm ext}$ parametrizing bundles $\cF_r$ arising from non--trivial extensions as in \eqref{eq:estensionM}, namely \linebreak $\dim(\mathcal M(r)^{\rm ext}) < \dim(\mathcal M(r))$, and the general bundle $\cU_r$ in the modular family $\mathcal M(r)$ is slope--stable w.r.t. $h$. 		
		
\end{lemma}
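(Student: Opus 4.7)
The plan is to establish the three assertions in order: generic smoothness with the dimension formula, strict containment of the extension locus, and slope-stability of the general member.

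First I would invoke Lemma \ref{lemma:genUr}-(ii), which yields $h^2(\cU_r \otimes \cU_r^{\vee}) = 0$; since this group is exactly the obstruction space to deformations of $\cU_r$, the point $[\cU_r]$ is unobstructed and hence a smooth point of $\mathcal M(r)$, so the family is generically smooth of local dimension $h^1(\cU_r \otimes \cU_r^{\vee})$. Combining simplicity ($h^0 = 1$), the vanishings of $h^2$ and $h^3$ from Lemma \ref{lemma:genUr}-(ii), and the values of $\chi$ from Lemma \ref{lemma:genUr}-(i) gives $h^1 = 1 - \chi$, which equals $r^2 - 1$ when $r$ is odd and $r^2 + 1$ when $r$ is even, as asserted.

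Next, to bound $\dim \mathcal M(r)^{\rm ext}$, I would use the natural map $[\cF_r] \mapsto [\cU_{r-1}]$, whose fibers are dominated by the projectivization of ${\rm Ext}^1({\mathscr N}_{\epsilon_r}, \cU_{r-1})$. The vanishing of $h^2$ and $h^3$ at the specialization $\cG_{r-1}$ from Lemma \ref{lemma:1}-(i), together with the invariance of $\chi$ in the flat family $\mathcal M(r-1)$, lets one transfer the values of Lemma \ref{lemma:2}-(ii) to a general $\cU_{r-1}$ via semi-continuity. Plugging in the inductive dimension of $\mathcal M(r-1)$ proved in the first step, a parity-wise count yields $\dim \mathcal M(r)^{\rm ext} \leq r^2 - r + 1$ when $r$ is odd and $\leq r^2 - r$ when $r$ is even, strictly smaller than $\dim \mathcal M(r)$ in both cases for $r \geq 2$.

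For slope-stability I would argue by contradiction using Theorem \ref{thm:stab}: stability and slope-stability coincide for $h$-Ulrich bundles by part (c), so suppose the general $\cU_r$ is strictly semistable. Applying part (b) inductively along a Jordan--H\"older filtration presents $\cU_r$ as an iterated extension of lower-rank $h$-Ulrich bundles of the same slope $\mu = 4(2c-b-a) - 2$. By Theorem \ref{prop:LineB} combined with the involutions of Remark \ref{rem:3.2}, under the standing assumption $0 \leq a \leq b \leq 1$ the only $h$-Ulrich line-bundle Jordan--H\"older factors of slope $\mu$ are ${\mathscr N}_1$ and ${\mathscr N}_2$, which forces $[\cU_r]$ to lie in the union of extension strata of the type already analyzed (with either parity assignment of $\epsilon_r$), each lower-dimensional by the previous paragraph. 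This contradicts generality of $[\cU_r]$ in $\mathcal M(r)$, so $\cU_r$ is stable and hence slope-stable.

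The hard part will be the stability step: one has to make sure every possible destabilizing presentation of $\cU_r$ is controlled, not merely extensions with a line-bundle quotient. In particular one needs that any intermediate-rank Jordan--H\"older subquotient is itself an iterated extension of ${\mathscr N}_1$ and ${\mathscr N}_2$ (so that the full filtration is absorbed into the inductive framework built around \eqref{eq:estensionM}), together with a uniform bound showing that the union of all resulting strata in $\mathcal M(r)$ remains of dimension strictly less than $\dim \mathcal M(r)$.
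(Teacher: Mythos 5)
Your first two steps match the paper's proof essentially verbatim: generic smoothness and the dimension formula come from $h^2(\cU_r\otimes\cU_r^{\vee})=h^3(\cU_r\otimes\cU_r^{\vee})=0$, simplicity, and $h^1=1-\chi$; and the bound on $\dim\mathcal M(r)^{\rm ext}$ via the fibration over $\mathcal M(r-1)$ with fibres inside $\mathbb P(\Ext^1({\mathscr N}_{\epsilon_r},\cU_{r-1}))$, using semi-continuity against $\cG_{r-1}$ and Lemma \ref{lemma:2}-(ii), is exactly the paper's computation (your parity-wise totals $r^2-r+1$ and $r^2-r$ agree with the paper's inequality \eqref{eq:ineqpal}).

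The stability step, however, contains a genuine gap, and on one point an error. First, the claim that ``the only $h$-Ulrich line-bundle Jordan--H\"older factors of slope $\mu$ are ${\mathscr N}_1$ and ${\mathscr N}_2$'' cannot be right as stated: by \eqref{slope} \emph{every} $h$-Ulrich bundle on $(X,h)$ has the same slope $d+g-1$, so when $a=b=0$ the line bundles $L, L^U, M, M^U$ are equally admissible as slope-$\mu$ factors; what could rule them out is a Chern-class computation on $c_1$ and $c_2$ of the filtration (note $c_1(L)+c_1(L^U)=c_1(N)+c_1(N^U)$, so $c_1$ alone does not suffice), which you do not carry out. Second, and more importantly, you yourself flag that intermediate-rank Jordan--H\"older subquotients need not be iterated extensions of ${\mathscr N}_1$ and ${\mathscr N}_2$, and that one needs a uniform bound on the union of all destabilizing strata --- but this is precisely the content of the step, not a detail to be deferred: without it the contradiction with $\dim\mathcal M(r)^{\rm ext}<\dim\mathcal M(r)$ does not follow, since a priori the general $\cU_r$ could be destabilized by a higher-rank Ulrich subsheaf lying outside the stratum $\mathcal M(r)^{\rm ext}$. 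The paper closes exactly this gap differently: it argues by induction on $r$ (with $[\cU_{r-1}]$ general already slope-stable) and invokes \cite[Proposition 3.10]{fa-fl3} to show that a general non-slope-stable member of $\mathcal M(r)$ would necessarily sit in an extension of ${\mathscr N}_{\epsilon_r}$ by a member of $\mathcal M(r-1)$, i.e. would lie in $\mathcal M(r)^{\rm ext}$, forcing $\mathcal M(r)=\mathcal M(r)^{\rm ext}$ and contradicting the strict dimension inequality already established. To repair your argument you would need to supply that reduction (or an equivalent control of all possible destabilizing sub- and quotient sheaves), not merely assert that it is the hard part.
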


\begin{proof} Let $\mathcal U_r$ be a general member of the modular family 
$\mathcal M(r)$. From Lemma \ref{lemma:genUr}, one has  $h^0(\cU_r \otimes \cU_r^{\vee})=1$, i.e. it is simple, and $h^j(\cU_r \otimes \cU_r^{\vee})=0$ for $j=2,3$.

From the fact that $h^2(\cU_r \otimes \cU_r^{\vee})=0$, it follows that  the modular family $\mathcal M(r)$ is  generically smooth of dimension $\dim (\mathcal M(r)) = h^1(\cU_r \otimes \cU_r^{\vee})$ (cf. \cite[Prop.\;2.10]{c-h-g-s}). On the other hand, since we have also $h^3(\cU_r \otimes \cU_r^{\vee})=0$ and $h^0(\cU_r \otimes \cU_r^{\vee})=1$, then  
$h^1(\cU_r \otimes \cU_r^{\vee})  =  -\chi(\cU_r \otimes \cU_r^{\vee})+1$. Therefore, the formula concerning $\dim (\mathcal M(r))$ directly follows from Lemma \ref{lemma:genUr}-(i), since the given $\chi$ depends only on the Chern classes of $X$, which are fixed, and on the Chern classes of the two factors,  which in turn are those as $\cF_r$ and so of $\cG_r$ as well. 

Similarly $[\cU_{r-1}] \in \mathcal M(r-1)$ general is by assumption slope-stable w.r.t. h, so in particular simple, thus it satisfies $h^0(\cU_{r-1} \otimes \cU_{r-1}^{\vee})=1$. Therefore, using Lemma \ref{lemma:genUr}-(ii), the same reasoning as above 
	shows that
  \begin{equation}\label{eq:dimUr-1}
\dim (\mathcal M(r-1))=  h^1(\cU_{r-1} \otimes \cU_{r-1}^{\vee}) = - \chi(\cU_{r-1} \otimes \cU_{r-1}^{\vee}) +1,   
\end{equation}
where $\chi (\cU_{r-1} \otimes \cU_{r-1}^{\vee})$ as in Lemma \ref{lemma:genUr}-(i) (with $r$ replaced by $r-1$).   Morover, by the proof of Proposition \ref{prop:new}, we have 
  \begin{equation}\label{eq:dimextv}
    {\rm ext}^1({\mathscr N}_{\epsilon_r},\cU_{r-1})= h^1(\cU_{r-1} \otimes {\mathscr N}_{\epsilon_r}^{\vee})\leqslant h^1(\cG_{r-1} \otimes {\mathscr N}_{\epsilon_r}^{\vee}),
  \end{equation}
   where the latter is computed in Lemma \ref{lemma:2}-(ii) (with $r$ replaced by $r-1$). 
	Therefore, by the very definition of $\mathcal M(r)^{\rm ext}$ and by \eqref{eq:dimUr-1}-\eqref{eq:dimextv}, we have 
	\begin{eqnarray*}
  \dim (\mathcal M(r)^{\rm ext}) & \leqslant &  \dim (\mathcal M(r-1)) + \dim (\mathbb P (\Ext^1({\mathscr N}_{\epsilon_r},\cU_{r-1})) \\
          & = & - \chi(\cU_{r-1} \otimes \cU_{r-1}^{\vee}) +1 + h^1(\cU_{r-1} \otimes {\mathscr N}_{\epsilon_r}^{\vee}) -1 \\
& \leqslant & - \chi(\cU_{r-1} \otimes \cU_{r-1}^{\vee}) + h^1(\cG_{r-1} \otimes {\mathscr N}_{\epsilon_r}^{\vee}).
\end{eqnarray*} On the other hand, from the above discussion, we have also 
\begin{eqnarray*}
\dim (\mathcal M(r)) = - \chi(\cU_{r} \otimes \cU_{r}^{\vee}) +1.
\end{eqnarray*}
Therefore to prove that $\dim (\mathcal M(r)^{\rm ext}) < \dim (\mathcal M(r))$ 
it is enough to  show that for any integer $r \geqslant2$ the following inequality 
\begin{eqnarray*} - \chi(\cU_{r-1} \otimes \cU_{r-1}^{\vee}) + h^1(\cG_{r-1} \otimes {\mathscr N}_{\epsilon_r}^{\vee}) < - \chi(\cU_{r} \otimes \cU_{r}^{\vee}) +1
\end{eqnarray*}
holds true. 
Notice that the previous inequality reads also  
\begin{equation}\label{eq:ineqpal}
- \chi(\cU_{r} \otimes \cU_{r}^{\vee}) +1 + \chi(\cU_{r-1} \otimes \cU_{r-1}^{\vee}) - h^1(\cG_{r-1} \otimes {\mathscr N}_{\epsilon_r}^{\vee}) >0,
\end{equation} which is satisfied for any $r \geqslant 2$ as one easily observe by using Lemmas \ref{lemma:genUr}-(i) and \ref{lemma:2}-(ii): if $r$ is even, the left hand side of \eqref{eq:ineqpal} reads $r^2 + 1 - (r-1)^2+2-r-1=r+1$ which obviously is positive since 
$r \geqslant2$; if otherwise $r$ is odd, then $r \geqslant 3$ and the left hand side of \eqref{eq:ineqpal} reads $r^2-1-(r-1)^2-r=r-2$, which obviously is positive 
under the assumptions $r \geqslant3$. 

Finally, to prove slope--stability of the general member $\cU_r$ of the modular family $\mathcal M(r)$  we can use induction on $r$, the result being obviously true for $r=1$, where $\cU_1 = {\mathscr N}_1$, $\mathcal M(1) = \{{\mathscr N}_1\}$ is a singleton, and where $\mathcal M(1)^{\rm ext} = \emptyset$.

Assume therefore $r \geqslant 2$ and, by contradiction, that the general member of $\mathcal M(r)$ were not slope--stable w.r.t. $h$, whereas the general point $[\cU_{r-1}] \in \mathcal M(r-1)$ of the modular component corresponds to a bundle which is slope-stable w.r.t. $h$. Then, one can argue as in the proof of \cite[Proposition 3.10]{fa-fl3} to get that the general member $\cU_r$ of $\mathcal M(r)$ turns out to be an extension of ${\mathscr N}_{\epsilon_r}$ by a member of
 $\mathcal{M}(r-1)$, i.e. $\cU_r$ would be an element of $\mathcal M(r)^{\rm ext}$ hence $\mathcal M(r)=\mathcal M(r)^{\rm ext}$, contradicting Lemma \ref{lemma:dimU}. 
\end{proof}

The previous results allow us to state and prove the main result of this section. Before doing this recall  that, for a given polarized variety $X$, there is the notion of {\em (geometric) $h$-Ulrich wildness}, as suggested by analogous definitions in \cite{DG}, namely if $X$ possesses families of dimension $p$ of pairwise non--isomorphic, indecomposable, $h$-Ulrich vector bundles for arbitrarily large integers $p$, cf. e.g. \cite[Introduction]{DG}.

Thus, as Theorem \ref{prop:LineB} proves that threefold scrolls $X$ studied in this paper are of {\em $h$-Ulrich complexity} $uc_{h}(X) = 1$, for any $a, b \geqslant 0$ and any $c \geqslant a+b+1$, when in particular 
$0 \leqslant a \leqslant b \leqslant 1$ the next result gives a constructive proof of the fact that threefold scrolls $(X, h)$ are (geometrically) Ulrich wild, explicitly describing families of  pairwise non--isomorphic, indecomposable, $h$-Ulrich vector bundles of arbitrarily large dimension and rank, with further details  concerning possible ranks that can actually occur. Indeed one has the following:

\begin{theo}\label{thm:general0}  Let $0 \leqslant a \leqslant b \leqslant 1$ and $c \geqslant a+b+1$ be integers and let $(X, h)$ be a threefold  scroll over $\FF_a$ as in \eqref{eq:Xe}. 

Then, for any integer $r \geqslant 1$, the moduli space of rank-$r$ vector bundles $\cU_r$ on $X_e$ which are $h$-Ulrich and with Chern classes as in \eqref{eq:c1rcaso0}, 
    is not empty and it contains a generically smooth component, denoted by $\mathcal M(r)$, of dimension 
		\begin{eqnarray*}
		\dim (\mathcal M(r) ) = \begin{cases} r^2 -1, & \mbox{if $r$ is odd}, \\
			 r^2+1 , & \mbox{if $r$ is even},
    \end{cases}
    \end{eqnarray*} whose general point $[\cU_r] \in \mathcal M(r)$  
corresponds to a  slope-stable vector bundle, of $h$-slope 
$\mu(\cU_r) = 4(2c-b-a)-2$.

In particular $(X, h)$ is {\em (geometrically) $h$-Ulrich wild}, with no {\em slope-stable-Ulrich-rank gaps} w.r.t. the chosen Chern classes. 
\end{theo}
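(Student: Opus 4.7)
The plan is to prove the theorem by induction on $r$, using the iterative extension construction recorded in \eqref{eq:1} together with the technical results already established in Lemmas \ref{lemma:1}, \ref{lemma:2} and Proposition \ref{prop:new}, as well as the dimension count and slope-stability of Lemma \ref{lemma:dimU}.

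For the base case $r=1$, Theorem \ref{prop:LineB} gives the $h$-Ulrich line bundle $\mathcal{N}_1 = N^U$ (with $c_1, c_2, c_3$ matching \eqref{eq:c1rcaso0} for $r=1$), and $\mathcal{M}(1) := \{[\mathcal{N}_1]\}$ trivially satisfies the statement (dimension $0 = 1^2 - 1$, slope-stability being automatic for line bundles, with the correct $h$-slope by \eqref{slope}). The case $r=2$ is handled by Theorem \ref{thm:rk2case1}-($1_a$) (when $0 \leq a = 0 \leq b \leq 1$) and Theorem \ref{thm:rk2case2}-($j$) (when $a=b=1$): each gives a generically smooth component $\mathcal{M}(2)$ of dimension $5 = 2^2+1$ whose general point is a slope-stable, special $h$-Ulrich rank-two bundle of the prescribed $h$-slope.

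For the inductive step, assume the statement holds for $r-1$, so $\mathcal{M}(r-1)$ is a generically smooth component, with general point $[\mathcal{U}_{r-1}]$ slope-stable, $h$-Ulrich, of $h$-slope $4(2c-b-a)-2$. Proposition \ref{prop:new} guarantees that $\mathrm{Ext}^1(\mathcal{N}_{\epsilon_r}, \mathcal{U}_{r-1}) \neq 0$ and its general element produces a rank-$r$ bundle $\mathcal{F}_r$ which is simple, $h$-Ulrich, with Chern classes as in \eqref{eq:c1rcaso0}, $h$-slope $4(2c-b-a)-2$, and satisfying $h^j(\mathcal{F}_r\otimes \mathcal{F}_r^{\vee})=0$ for $j=2,3$. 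By \cite[Proposition 2.10]{c-h-g-s}, the vanishing of $h^2(\mathcal{F}_r \otimes \mathcal{F}_r^{\vee})$ together with simplicity implies that $\mathcal{F}_r$ belongs to a (flat and irreducible) smooth modular family, whose GIT image is an irreducible component $\mathcal{M}(r)$ of the moduli space of rank-$r$ $h$-Ulrich bundles with the fixed Chern classes. Lemma \ref{lemma:genUr} then propagates simplicity and the vanishing $h^j(\mathcal{U}_r \otimes \mathcal{U}_r^{\vee}) = 0$, $j=2,3$, to the general point $[\mathcal{U}_r] \in \mathcal{M}(r)$; combined with $h^0(\mathcal{U}_r\otimes\mathcal{U}_r^{\vee})=1$, the dimension formula
\[
\dim \mathcal{M}(r) \,=\, h^1(\mathcal{U}_r \otimes \mathcal{U}_r^{\vee}) \,=\, 1 - \chi(\mathcal{U}_r \otimes \mathcal{U}_r^{\vee})
\]
follows by Lemma \ref{lemma:genUr}-(i), yielding $r^2-1$ for $r$ odd and $r^2+1$ for $r$ even.

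The main obstacle, and the heart of the induction, is proving slope-stability of the general $\mathcal{U}_r \in \mathcal{M}(r)$. This is achieved in Lemma \ref{lemma:dimU} via a strict dimension inequality: one shows
\[
\dim \mathcal{M}(r)^{\mathrm{ext}} \,\leq\, \dim \mathcal{M}(r-1) + \dim \mathbb{P}\bigl(\mathrm{Ext}^1(\mathcal{N}_{\epsilon_r},\mathcal{U}_{r-1})\bigr) \,<\, \dim \mathcal{M}(r),
\]
which boils down, after substituting the values from Lemmas \ref{lemma:genUr}-(i) and \ref{lemma:2}-(ii), to the inequality \eqref{eq:ineqpal}, trivial for both parities of $r \geq 2$. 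If the general $\mathcal{U}_r$ failed to be slope-stable, destabilization by subsheaves of the same $h$-slope, together with the inductive slope-stability of $\mathcal{U}_{r-1}$ and Theorem \ref{thm:stab}-(b), would force $\mathcal{U}_r$ to arise itself from a non-trivial extension \eqref{eq:estensionM}, hence force $\mathcal{M}(r) = \mathcal{M}(r)^{\mathrm{ext}}$, contradicting the strict inequality. This closes the induction.

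Finally, the last assertion is immediate: for every $r \geq 1$ we have exhibited an irreducible family of pairwise non-isomorphic (simple, hence indecomposable) slope-stable $h$-Ulrich bundles of rank $r$, with $\dim \mathcal{M}(r) \to \infty$ as $r \to \infty$, so $(X,h)$ is (geometrically) $h$-Ulrich wild in the sense of \cite{DG}, and since stable Ulrich bundles exist in every rank $r\geq 1$ there are no slope-stable-Ulrich-rank gaps for the chosen Chern classes.
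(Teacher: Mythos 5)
Your proposal is correct and follows essentially the same route as the paper: the paper's own proof of Theorem \ref{thm:general0} is precisely an assembly of Theorems \ref{prop:LineB}, \ref{thm:rk2case1}, \ref{thm:rk2case2} together with Lemmas \ref{lemma:genUr} and \ref{lemma:dimU}, with the induction, the dimension count via $1-\chi(\cU_r\otimes\cU_r^{\vee})$, and the slope-stability argument through the strict inequality $\dim \mathcal M(r)^{\rm ext}<\dim \mathcal M(r)$ all carried out exactly where you place them. The only cosmetic difference is that you make the inductive scaffolding explicit inside the proof of the theorem itself, whereas the paper delegates it to the preceding lemmas and states the theorem's proof as a one-paragraph consequence.
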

\begin{proof} The proof is a consequence of Theorems \ref{prop:LineB}, \ref{thm:rk2case1} and \ref{thm:rk2case2} and of  
Lemmas \ref{lemma:genUr}, \ref{lemma:dimU} where, as already mentioned, by a small abuse of notation we have used the same symbol $\mathcal M(r)$  for the (flat) smooth modular family which, via GIT-quotient, gives rise (by an \'etale cover) to an open, dense subset of the generically smooth, irreducible component of the corresponding moduli space.

Since this certainly holds for infinitely many ranks $r$ then modular components  $\mathcal M(r)$, whose dimensions quadratically grow with $r$, give rise to explicit families of arbitrarily large dimension and rank of slope-stable,  pairwise non--isomorphic, indecomposable, $h$-Ulrich vector bundles on $(X, h)$ which implies its {\em (geometric) $h$-Ulrich wildness}. Furthermore, since more precisely this occurs for any integer $r \geqslant 1$, then $(X, h)$ supports indecomposable, slope-stable, $h$-Ulrich bundles of any rank $r \geqslant 1$ so there is no gap on such ranks.  
\end{proof}

%%%%%
  %
  % BIBLIOGRAFIA
  %
  %%%%%%%%

\end{document}